\title{Whitney polygons, symbol homology and cobordism maps}
\author{Bijan Sahamie} 
\address{Mathematisches Institut der LMU M\"unchen, 
Theresienstrasse 39, 80333 M\"unchen Germany}
\email{sahamie@math.lmu.de}
\urladdr{http://www.math.lmu.de/~sahamie}
\theoremstyle{plain} 
\newtheorem{theorem}{Theorem}[section]   
\newtheorem{lem}[theorem]{Lemma}         
\newtheorem{prop}[theorem]{Proposition}
\newtheorem{cor}[theorem]{Corollary}
\theoremstyle{definition}
\newtheorem{definition}[theorem]{Definition}   
\newtheorem{rem}{Remark}
\newtheorem{example}{Example}[section]            
\numberwithin{equation}{section}
\begin{document}
%
%
\newcommand{\bbh}{\mathfrak{sh}}
\newcommand{\bh}{\bbh_*}
\newcommand{\bhu}{\bbh_{U,*}}
\newcommand{\mSu}{\mathcal{S}_U}
\newcommand{\bbl}{\mathbb{L}}
\newcommand{\Z}{\mathbb{Z}}
\newcommand{\R}{\mathbb{R}}
\newcommand{\C}{\mathbb{C}}
\newcommand{\Q}{\mathbb{Q}}
\newcommand{\phinat}{\phi_{nat}}
\newcommand{\hfkhatabs}{\underline{\overline{\rm HFK}}}
\newcommand{\MCG}{\mbox{\rm MCG}}
\newcommand{\ehat}{\widehat{\rm E}}
\newcommand{\phinathat}{\widehat{\phinat}}
\newcommand{\E}{\Delta}
\newcommand{\gltz}{\mbox{\rm GL}_2(\Z)}
\newcommand{\delhatw}{\widehat{\partial}^w}
\newcommand{\afat}{{\boldsymbol\alpha}}
\newcommand{\bfat}{{\boldsymbol\beta}}
\newcommand{\gfat}{{\boldsymbol\gamma}}
\newcommand{\pfat}{{\boldsymbol p}}
\newcommand{\pfu}{{\boldsymbol {\underline p}}}
\newcommand{\pfuprime}{{\boldsymbol {\underline p}'}}
\newcommand{\qfat}{{\boldsymbol q}}
\newcommand{\qfu}{{\boldsymbol {\underline q}}}
\newcommand{\qfuprime}{{\boldsymbol {\underline q}'}}
\newcommand{\rfat}{{\boldsymbol r}}
\newcommand{\tfat}{{\boldsymbol t}}
\newcommand{\rfu}{{\boldsymbol {\underline r}}}
\newcommand{\xfat}{{\boldsymbol x}}
\newcommand{\xfu}{{\boldsymbol {\underline x}}}
\newcommand{\yfat}{{\boldsymbol y}}
\newcommand{\yfu}{{\boldsymbol {\underline y}}}
\newcommand{\zfat}{{\boldsymbol z}}
\newcommand{\sspinc}{\mbox{\rm {\tiny Spin}}^c}
\newcommand{\parsh}{\partial_{\bbh}}
\newcommand{\pshdot}{\parsh^{\bullet}}
\newcommand{\pshbox}{\parsh^{\boxtimes}}
\newcommand{\pco}{\partial_{\mbf}}
\newcommand{\parcoone}{\partial^{\mbox{\rm\tiny codim } 1}}
\newcommand{\mbp}{\mathbb{P}}
\newcommand{\mbf}{\mathbb{F}}
\newcommand{\csum}{+}
\newcommand{\cp}{\bullet}
\newcommand{\cohom}{\mathfrak{f}_*}
\newcommand{\mpq}{\mathcal{X}}
\newcommand{\hpq}{\mathcal{X}_*}
\newcommand{\os}{\mathbb{O}}
\newcommand{\oab}{\mathbb{O}_{(\afat,\bfat)}}
\newcommand{\oaibi}{\mathbb{O}_{(\afat_i,\bfat_i)}}
\newcommand{\oapbp}{\mathbb{O}_{(\afat',\bfat')}}
\newcommand{\coll}{\mathfrak{coll}}
\def\overdot#1{\overset{\mbox{\bf .}}{#1}}
\def\overup#1{\overset{\uparrow}{#1}}
\def\overdown#1{\overset{\downarrow}{#1}}
\newcommand{\fin}{F^{\downarrow}}
\newcommand{\fout}{F^{\uparrow}}
\newcommand{\marked}{M^{\mbox{\rm\tiny\textbullet}}}
\newcommand{\pointed}{P^{\mbox{\rm\tiny\textbullet}}}
\newcommand{\bolddot}{\mbox{\rm\textbullet}}
\newcommand{\finprime}{F^{\downarrow}\,\!'}
\newcommand{\foutprime}{F^{\uparrow}\,\!'}
\newcommand{\markedprime}
{M^{\mbox{\rm\tiny\textbullet}}\,\!'}
\newcommand{\pointedpp}
{P^{\mbox{\rm\tiny\textbullet}}\,\!''}
\newcommand{\finpp}{F^{\downarrow}\,\!''}
\newcommand{\foutpp}{F^{\uparrow}\,\!''}
\newcommand{\markedpp}
{M^{\mbox{\rm\tiny\textbullet}}\,\!''}
\newcommand{\pointedprime}
{P^{\mbox{\rm\tiny\textbullet}}\,\!'}
\newcommand{\uniq}{\mathfrak{U}^\downarrow}
\newcommand{\mD}{\mathfrak{D}}
\newcommand{\mff}{\mathfrak{F}}
\newcommand{\pu}{\,\widehat{+}\,}
\newcommand{\cu}{\,\widehat{\circ}\,}
\newcommand{\gamfull}{\mathcal{M}^{\mu}_{B}}
\newcommand{\gafull}{\mathcal{M}^{\mu}}
\newcommand{\gamfullw}{\Gamma^{\mu;w}_{B}}
\newcommand{\ev}{\mathfrak{ev}}
\newcommand{\evf}{\mathfrak{ct}_{\mbf}}
\newcommand{\mbT}{\mathbb{T}}
\newcommand{\pol}{\mathbb{P}}
\newcommand{\inmultc}{\mathfrak{m}^{\mC}}
\newcommand{\outmultc}{\mathfrak{m}_{\mC}}
\newcommand{\inmultr}{\mathfrak{m}^{\mR}}
\newcommand{\outmultr}{\mathfrak{m}_{\mR}}
\newcommand{\mm}{\mathfrak{m}}
%
%

\newcommand{\sone}{\mathbb{S}^1}
\newcommand{\lra}{\longrightarrow}
\newcommand{\lmt}{\longmapsto}
%

%
%
\newcommand{\ztwo}{\mathbb{Z}_2}
\newcommand{\RP}{\mbox{\rm RP}}
\newcommand{\spinc}{\mbox{\rm Spin}^c}
\def\co{\colon\thinspace}
\newcommand{\stwo}{\mathbb{S}^2}
\newcommand{\pd}{\text{PD}}
\newcommand{\sprung}{\\[0.3cm]}
\newcommand{\fund}{\pi_1}
\newcommand{\kerg}{\mbox{\rm Ker}_G\,}
\def\Ker#1{\mbox{\rm Ker}_{#1}\,}
\newcommand{\im}{\mbox{\rm Im}\,}
\newcommand{\id}{\mbox{\rm id}}
\newcommand{\sothree}{\mathbb{SO}_3}
\newcommand{\sthree}{\mathbb{S}^{3}}
\newcommand{\disc}{\mbox{\rm D}}
\newcommand{\systwo}{C^{\infty}(Y,\stwo)}
\newcommand{\inner}{\text{int}}
\newcommand{\bk}{\backslash}

%
%
\newcommand{\contstand}{(\mathbb{R}^3,\xi_0)}
\newcommand{\cont}{(M,\xi)}
\newcommand{\sym}{\xi_{sym}}
\newcommand{\xistd}{\xi_{std}}
\newcommand{\cyl}{\mbox{\rm Cyl}_{r_0}^\mu}
\newcommand{\stap}{\mbox{\rm S}_+}
\newcommand{\stam}{\mbox{\rm S}_-}
\newcommand{\stapm}{\mbox{\rm S}_\pm}
\newcommand{\modulo}{\;\;\mbox{\rm mod}\,}

%
%
\newcommand{\crit}{\mbox{\rm Crit}}
\newcommand{\MC}{\mbox{\rm MC}}
\newcommand{\bmorse}{\partial^{\mbox{\rm \begin{tiny}M\!C\end{tiny}}}}
\newcommand{\M}{\mathcal{M}}
\newcommand{\stable}{\mbox{\rm W}^s}
\newcommand{\unstable}{\mbox{\rm W}^u}
\newcommand{\ind}{\mbox{\rm ind}}
\newcommand{\Mhat}{\widehat{\M}}
\newcommand{\What}{\widehat{W}}
\newcommand{\modphi}{\mathcal{M}_\phi}
%
%
%

%
%
\newcommand{\moduli}{\mathcal{M}_{J_s}(x,y)}
\newcommand{\modulit}{\mathcal{M}_{J_{s,t}}}
\newcommand{\modulittau}{\mathcal{M}_{J_{s,t(\tau)}}}
\newcommand{\modulittaubig}{\mathcal{M}_{J_{s,t,\tau}}}
\newcommand{\modhat}{\widehat{\mathcal{M}}_{J_s}(x,y)}
\newcommand{\modulihat}{\widehat{\mathcal{M}}}
\newcommand{\modhatone}{\widehat{\mathcal{M}}_{J_{s,1}}}
\newcommand{\modhatzero}{\widehat{\mathcal{M}}_{J_{s,0}}}
\newcommand{\modhatphi}{\widehat{\mathcal{M}}_{\phi}}
\newcommand{\moduliiso}{\mathcal{M}^{\Psi_t}}
\newcommand{\modspace}{\mathcal{M}}
\newcommand{\phihat}{\widehat{\phi}}
\newcommand{\Phiinfty}{\Phi^\infty}
\newcommand{\Dhat}{\widehat{\D}}
\newcommand{\cops}{\partial_{J_s}}
\newcommand{\talpha}{\mathbb{T}_{\boldsymbol{\alpha}}}
\newcommand{\tbeta}{\mathbb{T}_{\boldsymbol{\beta}}}
\newcommand{\tgamma}{\mathbb{T}_{\boldsymbol{\gamma}}}
\def\marge#1{\marginpar{\scriptsize{#1}}}
\def\br#1{\begin{rotate}{90}#1\end{rotate}}
\newcommand{\hfhat}{\widehat{\mbox{\rm HF}}}
\newcommand{\sfh}{\mbox{\rm SFH}}
\newcommand{\sbottom}{\underline{s}}
\newcommand{\tbottom}{\underline{t}}
\newcommand{\cfhat}{\widehat{\mbox{\rm CF}}}
\newcommand{\cfinfty}{\mbox{\rm CF}^\infty}
\def\cfbb#1{\mbox{\rm CF}^+_{\leq #1}}
\def\cfbo#1{\mbox{\rm CF}^-_{\geq -#1}}
\def\bigboxplus#1{\underset{#1}{\mbox{\Huge\raisebox{-0.5ex}{$\boxplus$}}}}
\def\bbigboxplus#1#2{
\overset{#2}{\underset{#1}{\mbox{\Huge\raisebox{-0.5ex}{$\boxplus$}}}}
}
\def\bigboxtimes#1{\underset{#1}{\mbox{\Huge\raisebox{-0.5ex}{$\boxtimes$}}}}
\def\bbigboxtimes#1#2{
\overset{#2}{\underset{#1}{\mbox{\Huge\raisebox{-0.5ex}{$\boxtimes$}}}}
}
\def\bigcp#1#2{
\overset{#2}{\underset{#1}{\mbox{\Huge\raisebox{-0.32ex}{$\cp$}}}}
}
\newcommand{\cfleq}{\mbox{\rm CF}^{\leq 0}}
\newcommand{\cfcirc}{\mbox{\rm CF}^\circ}
\newcommand{\cfkcirc}{\mbox{\rm CFK}^\circ}
\newcommand{\cfkmcirc}{\mbox{\rm CFK}^{\circ,0}}
\newcommand{\cfkleq}{\mbox{\rm CFK}^{\leq 0}}
\newcommand{\cfkinfty}{\mbox{\rm CFK}^{0,\infty}}
\newcommand{\cfkminfty}{\mbox{\rm CFK}^{\infty,0}}

\newcommand{\cfkhat}{\widehat{\mbox{\rm CFK}}}
\newcommand{\cfkminus}{\mbox{\rm CFK}^{\bullet,-}}
\newcommand{\cfkmminus}{\mbox{\rm CFK}^{-,0}}

\newcommand{\cfkpstar}{\mbox{\rm CFK}^{+,*}}
\newcommand{\cfkostar}{\mbox{\rm CFK}^{\bullet,*}}
\newcommand{\hfkcirc}{\mbox{\rm HFK}^\circ}

\newcommand{\cfkoc}{\mbox{\rm CFK}^{\bullet,\circ}}
\newcommand{\cfkoo}{\mbox{\rm CFK}^{\bullet,\bullet}}
\newcommand{\hfkoc}{\mbox{\rm HFK}^{\bullet,\circ}}

\newcommand{\cfkco}{\mbox{\rm CFK}^{\circ,\bullet}}
\newcommand{\hfkco}{\mbox{\rm HFK}^{\circ,\bullet}}

\newcommand{\hfkoo}{\mbox{\rm HFK}^{\bullet,\bullet}}

\newcommand{\hfkmcirc}{\mbox{\rm HFK}^{\circ,0}}

\newcommand{\hfkhat}{\widehat{\mbox{\rm HFK}}}
\newcommand{\hfkplus}{\mbox{\rm HFK}^{\bullet,+}}
\newcommand{\hfkmplus}{\mbox{\rm HFK}^{+,\bullet}}
\newcommand{\hfkminus}{\mbox{\rm HFK}^{\bullet,-}}
\newcommand{\hfkmminus}{\mbox{\rm HFK}^{-,\bullet}}

\newcommand{\hfkinfty}{\mbox{\rm HFK}^{\bullet,\infty}}
\newcommand{\hfkminfty}{\mbox{\rm HFK}^{\infty,\bullet}}

\newcommand{\fco}{f^{\circ,\bullet}}
\newcommand{\Fco}{F^{\circ,\bullet}}

\newcommand{\Fcirc}{F^\circ}
\newcommand{\foc}{f^{\bullet,\circ}}
\newcommand{\Foc}{F^{\bullet,\circ}}
\newcommand{\Foo}{F^{\bullet,\bullet}}
\newcommand{\oc}{{\bullet,\circ}}

\newcommand{\gund}{g^{\bullet,\circ}}
\newcommand{\Ghatund}{G^{\bullet,\circ}}
\newcommand{\eund}{e^{\bullet,\circ}}
\newcommand{\Ehatund}{E^{\bullet,\circ}}

\def\cfinftyfilt#1#2{\mbox{\rm CFK}^{#1,#2}}
\newcommand{\hfinfty}{\mbox{\rm HF}^\infty}
\newcommand{\hfinftwist}{\underline{\mbox{\rm {HF}}^\infty}}
\newcommand{\fhat}{\widehat{f}}
\newcommand{\fcirc}{f^\circ}
\newcommand{\Fhat}{\widehat{F}}
\newcommand{\Jhat}{\widehat{J}}
\newcommand{\That}{\widehat{T}}
\newcommand{\Hhat}{\widehat{H}}

\newcommand{\hattheta}{\widehat{\Theta}}
\newcommand{\shattheta}{\widehat{\theta}}
\newcommand{\cfminus}{\mbox{\rm CF}^-}
\newcommand{\hfminus}{\mbox{\rm HF}^-}
\newcommand{\cfplus}{\mbox{\rm CF}^+}
\newcommand{\hfplus}{\mbox{\rm HF}^+}
\newcommand{\hfcirc}{\mbox{\rm HF}^\circ}
\def\hfbb#1{\hfplus_{\leq #1}}
\def\hfbo#1{\hfminus_{\geq -#1}}
\newcommand{\Hs}{\mathcal{H}_s}
\newcommand{\mh}{\mathcal{H}}
\newcommand{\gr}{\mbox{gr}}
\newcommand{\parinfty}{\partial^\infty}
\newcommand{\parhat}{\widehat{\partial}}
\newcommand{\parplus}{\partial^+}
\newcommand{\parminus}{\partial^-}
\newcommand{\symg}{\mbox{\rm Sym}^g(\Sigma)}
\newcommand{\symgg}{\mbox{\rm Sym}^{2g}(\Sigma)}
\newcommand{\symc}{\mbox{\rm Sym}^g(\mathbb{C})}
\newcommand{\pitwo}{\pi_2}
\newcommand{\pitwoham}{\pi_2^{\Psi_t}}
\newcommand{\symcon}{\mbox{\rm Sym}^g(\Sigma_1\#\Sigma_2)}
\newcommand{\symgone}{\mbox{\rm Sym}^{g_1}(\Sigma_1)}
\newcommand{\symgtwo}{\mbox{\rm Sym}^{g_2}(\Sigma_2)}
\newcommand{\symgmo}{\mbox{\rm Sym}^{g-1}(\Sigma)}
\newcommand{\symggmo}{\mbox{\rm Sym}^{2g-1}(\Sigma)}
\newcommand{\dom}{\mathcal{D}}
\newcommand{\bigtrans}{\left.\bigcap\hspace{-0.27cm}\right|\hspace{0.1cm}}
\newcommand{\tlt}{\times\ldots\times}
\newcommand{\Isotopy}{\Gamma^\infty_{\Psi_t}}
\newcommand{\orient}{\mathnormal{o}}
\newcommand{\ob}{\mathnormal{ob}}
\newcommand{\SL}{\mbox{\rm SL}}
\newcommand{\rhotilde}{\widetilde{\rho}}
\newcommand{\domstar}{\dom_*}
\newcommand{\domststar}{\dom_{**}}
\newcommand{\betaprime}{\beta'}
\newcommand{\betapp}{\beta''}
\newcommand{\betatilde}{\widetilde{\beta}}
\newcommand{\deltaprime}{\delta'}
\newcommand{\tbetaprime}{\mathbb{T}_{\bfat'}}
\newcommand{\talphaprime}{\mathbb{T}_{\afat'}}
\newcommand{\tdelta}{\mathbb{T}_{\delta}}
\newcommand{\phidelta}{\phi^{\Delta}}
\newcommand{\domtilde}{\widetilde{\dom}}
\newcommand{\loss}{\widehat{\mathcal{L}}}
\newcommand{\bargamma}{\overline{\Gamma}}
\newcommand{\alphaprime}{\alpha'}
\newcommand{\ga}{\Gamma_{\alpha;\beta',\beta''}}
\newcommand{\gbone}{\Gamma_{\alpha;\beta,\widetilde{\beta}}^{w,1}}
\newcommand{\gbtwo}{\Gamma_{\alpha;\beta,\widetilde{\beta}}^{w,2}}
\newcommand{\gbthree}{\Gamma_{\alpha;\beta,\widetilde{\beta}}^{w,3}}
\newcommand{\gbfour}{\Gamma_{\alpha;\beta,\widetilde{\beta}}^{w,4}}
\newcommand{\gcone}{\Gamma_{\alpha;\delta,\delta'}^{w,1}}
\newcommand{\gctwo}{\Gamma_{\alpha;\delta,\delta'}^{w,2}}
\newcommand{\gcthree}{\Gamma_{\alpha;\delta,\delta'}^{w,3}}
\newcommand{\gcfour}{\Gamma_{\alpha;\delta,\delta'}^{w,4}}

\newcommand{\eab}{\epsilon_{\alpha\beta}}
\newcommand{\ead}{\epsilon_{\alpha\delta}}
\newcommand{\hqhat}{\widehat{\mbox{\rm HQ}}}
\newcommand{\cupb}{\cup_\partial}
\newcommand{\oa}{\overline{a}}
\newcommand{\ab}{\alpha\beta}
\newcommand{\ad}{\alpha\delta}
\newcommand{\adb}{\alpha\delta\beta}
\newcommand{\tila}{\widetilde{a}}
\newcommand{\tilb}{\widetilde{b}}
\def\pdehn#1#2{D_{#1}^{+,#2}} 
\def\ndehn#1#2{D_{#1}^{-,#2}} 
\newcommand{\fraks}{\mathfrak{s}}
\newcommand{\frakso}{\mathfrak{s}_0}
\newcommand{\frakt}{\mathfrak{t}}
\newcommand{\Ghat}{\widehat{G}}
\newcommand{\pointswap}{\phi^{\rm PS}}
\newcommand{\genA}{A_{(\pointed,\fin,\fout)}}
\newcommand{\genB}{B_{(\pointedprime,\finprime,\foutprime)}}
\newcommand{\mA}{\mathcal{A}}
\newcommand{\mB}{\mathcal{B}}
\newcommand{\mC}{\mathcal{C}}
\newcommand{\mE}{\mathcal{E}}
\newcommand{\mF}{\mathcal{F}}
\newcommand{\mI}{\mathcal{I}}
\newcommand{\mJ}{\mathcal{J}}
\newcommand{\mM}{\mathcal{M}}
\newcommand{\mMhat}{\widehat{\mathcal{M}}}
\newcommand{\mN}{\mathcal{N}}
\newcommand{\mH}{\mathcal{H}}
\newcommand{\mK}{\mathcal{K}}
\newcommand{\mG}{\mathcal{G}}
\newcommand{\mP}{\mathcal{P}}
\newcommand{\mQ}{\mathcal{Q}}
\newcommand{\mR}{\mathcal{R}}
\newcommand{\mS}{\mathcal{S}}
\newcommand{\mShat}{\widehat{\mathcal{S}}}
\newcommand{\mT}{\mathcal{T}}
\newcommand{\mThat}{\widehat{\mathcal{T}}}
\newcommand{\sh}{\mbox{\rm h}}
\newcommand{\shb}{\mbox{\rm \underline{h}}}
\newcommand{\link}{\mathbb{L}}
\newcommand{\mKinq}{\mK^{\downarrow,\fout}}
\newcommand{\mKind}{\mK^{\downarrow,Q}}
\def\mKin#1{\mK^{\downarrow,#1}}
\newcommand{\mKoutq}{\mK^{\uparrow}}
\newcommand{\mKinr}{\mK^{\downarrow,\foutprime}}
\newcommand{\map}{\mathbb{MOR}}
\newcommand{\maphat}{\widehat{\mathbb{MOR}}}
\newcommand{\ohat}{\widehat{\Omega}}
\newcommand{\pto}{\widehat{\Psi}_{to}}
\newcommand{\pot}{\widehat{\Psi}_{ot}}
%
%
\newcommand{\bund}{\mathcal{P}}
\newcommand{\diag}{\Delta^{\!\!E}}
\newcommand{\inter}{m_{\diag}}
\newcommand{\ozs}{Ozsv\'{a}th}
\newcommand{\sza}{Szab\'{o}}
%

%
%
\fontsize{11}{14}\selectfont

\begin{abstract} 
We define a new homology theory we call symbol homology by
using decorated moduli spaces of Whitney polygons. By 
decorating different types of moduli spaces we obtain 
different flavors of this homology theory together with 
morphisms between them. Each of these flavors encodes the 
properties of a different type of Heegaard Floer homology. 
The morphisms between the symbol homologies enable 
us to push properties from one Floer theory to a 
different one. Furthermore, we obtain a new presentation 
of Heegaard Floer theory in which maps correspond 
to multiplication from the right with suitable elements of
our symbol homology. Finally, we present the construction
of cobordism maps in knot Floer theories and apply the
tools from symbol homology to give an invariance
proof.
\end{abstract}
\maketitle

\section{Introduction}\label{parone}
In \cite{OsZa01}, Ozsv\'{a}th and Szab\'{o} assign to a pointed
Heegaard diagram $(\Sigma,\afat,\bfat)$ the homology theories $\hfhat$,
$\hfminus$ and $\hfinfty$ by studying holomorphic disks in the symmetric product
$\symg$. These homologies turn out to be invariants of the $3$-manifold
associated to the Heegaard diagram. By altering the construction process,
different flavors of this theory were introduced like for instance knot
Floer homology or sutured Floer homology. Each of these theories are based
on studying moduli spaces of Whitney polygons. The $0$-dimensional 
moduli spaces are used to define maps, and the boundaries of 
the $1$-dimensional moduli spaces provide relations/properties of these maps 
and of the theory. We call this the {\it Floer theoretic scheme}, which is
common among all existing Floer theories. 
More precisely, almost all properties of maps in Floer theory are proved by 
applying the moduli space machinery, i.e.~deriving a statement on the 
moduli space level, and then interpreting this statement on the Floer chain 
level by counting components of moduli spaces.
This 
observation suggests that Floer theory takes place in 
an algebraic object formed by moduli spaces themselves, but which 
is hidden by the procedure of counting elements. The main goal of this 
paper is to construct such an algebraic object.\vspace{0.3cm}\\
We will decorate moduli spaces of Whitney polygons 
with data. More precisely, we classify three types of decorations and 
attach these decorations to the vertices of the polygons 
(see~Definition~\ref{def:generators} or cf.~\S\ref{sec:eai}). These decorated 
spaces will be used to generate an algebraic object $(\mThat,\boxplus,\boxtimes)$ 
in which we identify a substructure $(\mS,\boxplus,\boxtimes)$ we call 
{\bf symbol algebra} (see~Definition~\ref{def:symalg}). 
The connection between the symbol algebra and the Floer theoretic 
level is a {\it morphism}
$
 \ev
 \co
 (\mS,\boxplus,\boxtimes)\lra(\map,+,\circ)
$
where $\map$ should be thought of as the set of maps in 
Floer homology (see~Proposition~\ref{intro:ev} or cf.~\S\ref{sec:eai}). In 
this way, every map in Floer homology which is defined by counting 
$0$-dimensional
components of moduli spaces of Whitney polygons can be described
as an element in the symbol algebra. Moreover, in this algebraic
setting the codimension-$1$ boundaries of moduli spaces of Whitney
polygons can be described as elements in $\mS$: so, we obtain a 
differential $\parsh\co\mS\lra\mS$ on the symbol
algebra. We denote by $\bh$ the homology theory associated to
$(\mS,\parsh)$ and call it {\bf symbol homology}.
The morphism $\ev$ vanishes
on boundaries and, hence, a given map $f$ on the Floer theoretic 
level can be represented by an element $\fraks$ in the symbol homology 
in the sense that $\ev(\fraks)=f$.
By the Floer theoretic scheme mentioned above, a property of a map $f$
--~which is proved by applying the moduli space machinery~-- can be encoded
into a polynomial expression $P$ with coefficients in $\bh$ in such
a way that $f$ fulfills the property 
if $P(\fraks)=0$ (cf.~Example~\ref{example01}).
Hence, the symbol homology captures the information provided by the 
Floer theory and, therefore, seems to be a reasonable candidate for 
the algebraic object mentioned in the first paragraph.

The first construction of a symbol 
homology is given in \S\ref{sec:tshp}
and done using moduli spaces which are relevant for 
the $\hfhat$-theory.\\
A particular appealing feature of this theory is that we obtain 
a nice interpretation of Heegaard Floer theory in terms
of our algebraic setting. In fact, given a Heegaard diagram 
$\mH=(\Sigma,\afat,\bfat)$ we find a module $\mpq^0_{(\afat,\bfat)}$ in 
$\mThat$ which is naturally equipped with a differential 
$\partial_\mpq$, such that
\begin{equation}
 (\mpq^0_{(\afat,\bfat)}
 \otimes_{\cohom}\ztwo,\partial_\mpq\otimes\id)
 \cong
 (\cfhat(\mH),\parhat_{\mH})
 \label{eq:equiv}
\end{equation}
as chain complexes (see Theorem~\ref{thm:alterhf}). Furthermore, 
suppose we are given another
Heegaard diagram $\mH'$ and a map $f$ between 
the Heegaard Floer chain modules $\cfhat(\mH)$ and 
$\cfhat(\mH')$, where we denote by $\fraks_f$ the symbol which represents
$f$, then multiplication from the right with this symbol defines a map 
$
 \cdot\;\boxtimes\fraks_f\boxtimes\os
\co\mpq^0_{(\afat,\bfat)}\lra\mpq^0_{(\afat',\bfat')}
$
such that the following diagram commutes (see~Theorem~\ref{thm:alterhf})
\begin{equation}
 \xymatrix@C=1.5pc@R=0.5pc{
 \mpq^0_{(\afat,\bfat)}
 \otimes_{\cohom}
 \ztwo
 \ar[rr]^\cong
 \ar[ddd]_{(\cdot\;\boxtimes\fraks_f\boxtimes\os)\otimes\id}
 & &
 \cfhat(\mH)\ar[ddd]_{f}\\
 & \\ \\
 \mpq^0_{(\afat',\bfat')}
 \otimes_{\cohom}
 \ztwo
 \ar[rr]^\cong
 & &
 \cfhat(\mH'),
 }\label{eq:comdiag}
\end{equation}
where $\os$ is an element in the symbol homology which just depends 
on the pair of attaching circles $(\afat',\bfat')$. 
\subsection{Applications} The first construction of a symbol homology is
given in \S\ref{sec:tshp} by decorating moduli spaces that are relevant for
the $\hfhat$-homology. The construction does not depend on a particular
setup or on a particular type of moduli spaces and thus can be done with 
moduli spaces relevant for other Heegaard Floer homologies. These homologies
are then also described in terms which are analogous to \eqref{eq:equiv} 
and \eqref{eq:comdiag}.
We outline the construction for moduli spaces relevant to the
$\hfkhat$-homology (and the $\hfkminus$-homology). To fix notation,
we will denote by $\mS^w$ the symbol algebra which is generated using
moduli spaces relevant for the $\hfkhat$-homology and denote by $\bh^w$ 
the associated symbol homology. Recall
from above that a property of a map $f$ in the $\hfhat$-theory can be 
encoded into a polynomial expression $P$ with coefficients in $\bh$
such that if $\fraks$ is a symbol representing $f$, then $P(\fraks)=0$
implies that $f$ has the property. So, we say that $f$ has property $P$
or $\fraks$ has property $P$. Defining symbol algebras for different
flavors of Heegaard Floer theory naturally give rise to morphisms
between them. In this particular case, we define a morphism
$\mff\co\mS\lra\mS^w$ the so-called {\bf filtering morphism}. We will 
see that this morphism is a chain map
and, thus, descends to a map $\mff_*$ between the associated symbol 
homology theories. With this morphism at hand we are able to prove the 
following statement (cf.~also~Example~\ref{example02}).
\begin{theorem}\label{main} If a map $\fraks$ from a tensor product of 
Heegaard Floer chain complexes to another Heegaard Floer chain complex 
fulfills a property $P$, then the filtered map $\mff_*(\fraks)$ between 
the corresponding $\cfkhat$-knot Floer chain complexes fulfills the filtered 
property $P_\mff$.
\end{theorem}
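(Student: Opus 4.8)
The plan is to deduce the statement from a single structural fact: the filtering morphism descends to a \emph{ring} homomorphism on symbol homology, and ring homomorphisms automatically carry polynomial identities to polynomial identities. So once the two notions occurring in the statement are spelled out, the argument is purely formal.

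First I would make those notions explicit. By the convention set up in the introduction (and recalled just before the statement), saying that the symbol $\fraks\in\bh$ has property $P$ means that $P$ is a polynomial expression in one variable, built from the operations $\boxplus$ and $\boxtimes$ and with coefficients in $\bh$ — the coefficients being the symbols of the auxiliary maps (differentials, identities, and the like) occurring in the moduli-space relation that $P$ encodes — and that
\[
 P(\fraks)=0 \qquad\text{in }\bh .
\]
Correspondingly, the filtered property $P_\mff$ is by definition the polynomial expression obtained from $P$ by replacing each coefficient $c\in\bh$ by its image $\mff_*(c)\in\bh^w$. Thus the assertion to be proved is precisely
\[
 P_\mff\bigl(\mff_*(\fraks)\bigr)=0 \qquad\text{in }\bh^w .
\]

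Next I would invoke the properties of $\mff$ that are established when it is constructed in \S\ref{sec:tshp}: as a morphism of symbol algebras (cf.~Definition~\ref{def:symalg}) it is compatible with $\boxplus$, with $\boxtimes$, and with the gradings that prescribe which $\boxtimes$-products are legal; and it is a chain map for the symbol differentials, hence descends to a morphism of the associated symbol homologies $\mff_*\colon(\bh,\boxplus,\boxtimes)\lra(\bh^w,\boxplus,\boxtimes)$, which is again a ring homomorphism. Since every term of $P$ is a $\boxtimes$-monomial in $\fraks$ with coefficients from $\bh$, and these terms are combined by $\boxplus$, the homomorphism property of $\mff_*$ yields the key identity
\[
 \mff_*\bigl(P(\fraks)\bigr) = P_\mff\bigl(\mff_*(\fraks)\bigr) .
\]
Applying $\mff_*$ to the hypothesis $P(\fraks)=0$ and using $\mff_*(0)=0$ together with this identity gives $P_\mff(\mff_*(\fraks))=0$ in $\bh^w$, i.e.\ $\mff_*(\fraks)$ has property $P_\mff$. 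To match the phrasing of the statement, one last step is to invoke the compatibility of the knot-Floer evaluation morphism with $\mff$ (part of the setup of $\mff$): the image of $\mff_*(\fraks)$ under that morphism is exactly the filtered map between the corresponding $\cfkhat$-complexes, so the identity just obtained is genuinely a statement about that map.

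The hard part is not this short deduction but the inputs it consumes, which are dealt with in the body of the paper: verifying that $\mff$ is indeed a morphism of $(\mS,\boxplus,\boxtimes)$ \emph{respecting both operations}, and a chain map, so that $\mff_*$ is a bona fide ring homomorphism on symbol homology; and checking that $P_\mff$, read through the knot-Floer evaluation morphism, really does encode the \emph{filtered} version of the original Floer-theoretic property — that is, that the Alexander-filtered refinement of the underlying moduli-space relation still holds with the same combinatorial shape, so that the same polynomial, with filtered coefficients, expresses it. Granted these facts about the construction of $\mff$, the theorem follows as above.
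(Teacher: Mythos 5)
Your proposal is correct and is essentially the paper's own argument: the heart in both cases is that $\mff_*$ is a morphism of semialgebras on symbol homology, so $P_\mff(\mff_*(\fraks))=\mff_*(P(\fraks))=\mff_*(0)=0$, which is exactly Theorem~\ref{thm:propp}. The only cosmetic difference is how the dictionary between maps and symbols is phrased: you go through the evaluation morphism $\ev_*$ (as the paper does in Example~\ref{example02}), while the paper's proof of Theorem~\ref{main} instead invokes Theorem~\ref{thm:alterhf} and the $\fraks\boxtimes\os$, $P_\os$ formalism to regard maps between Floer chain complexes as symbols.
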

We point the reader to \S\ref{sec:ppomas} for a precise 
definition of $P_\mff$.
A similar statement can be formulated and proved for a relation between the
$\hfhat$-theory and $\hfkminus$ (see~Theorem~\ref{main2}). 
One of the benefits of the symbol homology theory is that it unifies the 
Floer chain level and the moduli space level into one object 
(see~discussion in \S\ref{sec:eai}, cf.~\S\ref{sec:rhftfsh} 
and \S\ref{sec:shafh}). A consequence of this unification is that 
it provides a systematic and immediate way to transfer properties between 
different Floer theoretic settings without difficulty. This is indicated 
by Theorem~\ref{main} (see~also Theorem~\ref{main2}).
Proofs of properties which need the moduli space machinery now do not 
need to be repeated in different settings but can now just be accepted 
by pointing to the results of this paper. This allows a systematic
transfer of properties between different flavors of Heegaard Floer 
theory and can in principle be done for other Floer homologies using the 
techniques from this paper. To demonstrate how the transfer is set up when 
explicitly worked out, we give a construction of cobordism maps for knot 
Floer homologies and apply the techniques from symbol homologies in the 
proof of the following theorem.
\begin{theorem}\label{cobmapinvar} For a $\spinc$-structure $\fraks$ 
over $W$ we have that
\[
  \Foc_{W;\fraks}
  \co
  \hfkoc(Y,K;\left.\fraks\right|_Y)
  \lra
  \hfkoc(Y',K';\left.\fraks\right|_{Y'})
\]
is uniquely defined up to sign.
\end{theorem}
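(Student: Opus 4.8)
The plan is to follow the "transfer" philosophy laid out in the introduction: rather than re-run the holomorphic-polygon machinery from scratch, we identify the invariance statement for $\Foc_{W;\fraks}$ as the filtered shadow of the corresponding invariance statement for the ordinary $\hfhat$-cobordism maps, and then invoke Theorem~\ref{main} (resp.~Theorem~\ref{main2}) to push it across. Concretely, first I would recall that a cobordism $W$ from $(Y,K)$ to $(Y',K')$ is decomposed into one-handle, two-handle and three-handle pieces, and that on each piece the map $\Foc_{W;\fraks}$ is built from the same $0$-dimensional moduli spaces of Whitney polygons (triangles for the two-handle piece, together with the stabilization/destabilization maps) as the $\hfhat$-cobordism map, but counted with the extra knot filtration bookkeeping coming from the basepoint $z$ (and, in the $\hfkminus$ case, the variable recording $w$). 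In the language of this paper this means $\Foc_{W;\fraks}$ is exactly $\ev(\mff_*(\fraks_{f}))$ (resp.~the $\hfkminus$-analogue) where $f$ is the corresponding composite of $\hfhat$-handle maps and $\fraks_f$ is its representing symbol in $\bh$.

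The heart of the argument is then purely formal. The statement "$\Foc_{W;\fraks}$ is uniquely defined up to sign" is, on the $\hfhat$-side, the conjunction of several properties $P$ of the composite map $f$: independence of the chosen Heegaard triple subordinate to a bouquet, independence of the ordering of handles of the same index, handle-cancellation, and the analogous naturality under the various Heegaard moves — each of these is, by the Floer-theoretic scheme, encoded by a polynomial relation $P$ with coefficients in $\bh$ satisfied by the symbol $\fraks_f$ (this is precisely the mechanism of Example~\ref{example01}). I would enumerate these properties one at a time; for each, the $\hfhat$-level truth $P(\fraks_f)=0$ is already known (it is the content of the existing invariance proof for closed-manifold maps, which we are allowed to cite as an input on the Floer level and which lifts to the symbol level because $\ev$ is injective on the relevant classes modulo sign). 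Applying Theorem~\ref{main} to each such $P$ yields $P_\mff(\mff_*(\fraks_f))=0$, i.e.~the filtered map $\mff_*(\fraks_f)$ satisfies the filtered version of each invariance property. Translating back through $\ev$ over $\ztwo$ (as in \eqref{eq:equiv}) gives that $\Foc_{W;\fraks}$ is independent of all the auxiliary choices, up to the sign ambiguity that is already present in the symbol representative. The $\hfkminus$-flavor is handled identically using Theorem~\ref{main2} in place of Theorem~\ref{main}, and the $\hfkoc$ statement follows by combining the two via the long exact sequence relating the flavors, which itself is the $\ev$-image of a symbol-level relation.

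The main obstacle I expect is bookkeeping rather than conceptual: one must check that \emph{every} choice entering the construction of $\Foc_{W;\fraks}$ — in particular the choice of bouquet for $K$ inside $W$, the choice of ``doubly pointed'' Heegaard triples compatible with both $z$ and $w$, and the commutation of the filtration datum with stabilization — is genuinely an instance of a moduli-space property that falls under the hypotheses of Theorem~\ref{main}, i.e.~that the relevant property $P$ is expressible as a polynomial over $\bh$ and not something that requires new analytic input specific to the knot setting (for example a Gromov-compactness argument for the \emph{doubly} pointed triangles whose boundary degenerations include the new family of disks hitting $z$). The cleanest way to dispatch this is to observe that the doubly pointed moduli spaces are literally sub-moduli-spaces of the singly pointed ones cut out by the condition of avoiding $z$, so their codimension-$1$ boundary strata are a sub-collection of the already-understood strata; hence the corresponding relation is the image under $\mff$ of the $\hfhat$-relation, which is exactly the situation Theorem~\ref{main} is designed to handle. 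Once this observation is in place, the proof is a finite list of invocations of Theorem~\ref{main} and Theorem~\ref{main2}, one per Heegaard-move/handle-choice, followed by the translation \eqref{eq:equiv}–\eqref{eq:comdiag} back to the Floer level.
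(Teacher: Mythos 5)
Your overall strategy is the paper's own: decompose $W$ into handles, observe that the $1$- and $3$-handle maps coincide with the Ozsv\'ath--Szab\'o ones, reduce everything to the commutation of the $2$-handle map $\Foc_{\link;\fraks}$ with the isomorphisms induced by the moves of Proposition~\ref{prop:bouquet}, and transfer the known $\hfhat$-level relations through $\mff_*$ and $\mff_{U;*}$ via Theorem~\ref{main} and Theorem~\ref{main2}. However, there is a genuine gap at the crucial step where you pass from the Floer level to the symbol level. You write that the $\hfhat$-level identity $P(\fraks_f)=0$ can be ``cited as an input on the Floer level'' and then ``lifts to the symbol level because $\ev$ is injective on the relevant classes modulo sign.'' The map $\ev_*$ is not injective, and nothing in the paper supports such a claim; if you only know that $\ev_*$ of a class vanishes, you cannot conclude that the class vanishes in $\bh$, and then Theorem~\ref{main} gives you nothing. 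The paper's mechanism is different and is the whole point of the construction: the Ozsv\'ath--Szab\'o invariance proofs are end-counting arguments on $1$-dimensional moduli spaces, so they can be read directly as exhibiting $P(\fraks_{\Fhat})$ as $\parsh$ of an explicit symbol, i.e.\ the relation is established in $\bh$ itself (as in Example~\ref{example01}), and only afterwards pushed through $\mff_*$ and evaluated by $\ev_*$. Your fallback observation that the filtered moduli spaces are cut out of the unfiltered ones (by the basepoint condition --- note it is $n_w=0$, not $n_z$, in the paper's conventions) is the reason $\mff$ is a chain map, not a substitute for this step.

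A second omission: the theorem is stated for a fixed $\spinc$-structure $\fraks$, but the plain symbol homology does not see $\spinc$-structures, so the argument as you set it up only yields the commutation relation for the sum $\Foc_{\link}=\sum_{\fraks}\Foc_{\link;\fraks}$ (finite by weak admissibility with respect to $z$). The paper obtains the refined statement \eqref{eq:goal2} by introducing the $\spinc$-decorated variant $\bh^c$ with its evaluation $\ev^c$, matching $\spinc$-data at common vertices in the $\boxtimes$-product and in $\parsh$; your proposal does not address this, nor is the ``long exact sequence relating the flavors'' you invoke for $\hfkoc$ needed or used anywhere --- $\hfkoc$ is simply shorthand for treating $\hfkhat$ (via $\mff_*$) and $\hfkminus$ (via $\mff_{U;*}$) in parallel.
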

For a precise definition of all notions and the maps we point the 
reader to \S\ref{sec:knotcob}. For a definition of $\hfkoc$ we point the
reader to \S\ref{knotfloerhomology}. In fact, we also give further demonstrations
in Example~\ref{example02}, Corollary~\ref{knotinv} and in the surgery 
exact triangle given in Theorem~\ref{thm:set} which is a generalization
of \cite[Theorem~8.2]{OsZa04} (cf.~also \cite[Theorem~2.7]{OsZa03}).

\subsection{Organization} The best method to read this article is probably to
combine a linear reading with jumps into \S\ref{sec:eai}. In 
\S\ref{sec:eai} we present a somewhat informal introduction to some of the
ideas which will help the reader to familiarize with the objects. In fact,
we introduce some notational conventions for the decorations in that section
which make the whole construction intuitive. Additionally, we present
two calculations of symbol homologies in easy situations 
(see~Example~\ref{ex:calc1} and Example~\ref{ex:calc2}).\\
In \S\ref{sec:wpash} we give the construction of a symbol homology 
modeled on the $\hfhat$-theory. In \S\ref{sec:shafh} we provide the 
operation $\ev$ that connects the symbol algebra with the maps 
between Floer chain modules. In \S\ref{sec:fsh} 
we give the construction of the filtered symbol algebra, provide 
the filtering morphism in \S\ref{sec:tfm} and describe 
in \S\ref{sec:ppomas} the implications of the filtering morphism.
In \S\ref{sec:rhftfsh} we give the new presentation of 
Floer theory in terms of our symbol homology introduced in 
\S\ref{sec:wpash}. What is done in that section can potentially be 
done with every other flavor of symbol homology. In \S\ref{sec:uesh} 
we define another flavor of symbol homology capturing the theory 
$\hfkminus$. This should serve as a model for how to introduce a 
$U$-variable into the theory. In \S\ref{sec:psh} we outline how to 
bring moduli spaces with dynamic boundary conditions into the theory. 
This is necessary to apply the symbol homologies in the proof of Theorem~\ref{cobmapinvar}, since the invariances in Heegaard Floer 
theory mainly come from morphisms that are defined by considering 
polygons with dynamic boundary conditions. In \S\ref{sec:knotcob} we 
provide the construction of cobordism maps in knot Floer theory, 
specifically focusing on $\hfkhat$ and $\hfkminus$. What is done 
there can also be applied to all other knot Floer theories with 
slight adaptions (cf.~also \S\ref{sec:okft}). Finally,
in \S\ref{sec:implications} we present a surgery exact triangle for
maps induced by knot cobordisms.
\subsection*{Acknowledgments} We wish to thank Kai Cieliebak for helpful
conversations.

\section{Preliminaries}\label{preliminaries}
\subsection{Heegaard Floer homologies}\label{prelim:01:1}
In Heegaard Floer theory one assigns to a closed, oriented $3$-manifold
the homology groups $\hfhat(Y)$, $\hfminus(Y)$, $\hfinfty(Y)$ 
and $\hfplus(Y)$ (see~\cite{OsZa01}). In the following,
we will give a brief review of the two versions $\hfhat(Y)$ 
and $\hfminus(Y)$.\vspace{0.3cm}\\
A Heegaard diagram $\mH$ is a triple $(\Sigma,\afat,\bfat)$, where 
$\Sigma$ is an oriented genus-$g$ surface 
and $\afat=\{\alpha_1,\dots,\alpha_g\}$, 
$\bfat=\{\beta_1,\dots,\beta_g\}$ are two sets of pairwise disjoint, 
simple, closed curves in $\Sigma$ called {\bf attaching circles}. 
Both $\afat$ and $\bfat$ are required to consist of curves which represent
linearly independent classes in $H_1(\Sigma,\Z)$. In the 
following we will talk about the curves in the set $\afat$ (resp.~$\bfat$) as  
{\bf $\afat$-curves} (resp.~{\bf $\bfat$-curves}). Without loss 
of generality, we may assume that the 
$\afat$-curves and $\bfat$-curves intersect 
transversely.\\
Now suppose we are given a closed, oriented $3$-manifold $Y$. To $Y$ we 
associate a Heegaard diagram $\mH$ and use it to construct a homology theory
as follows: To the Heegaard diagram $\mH$ we associate the triple $(\symg,\talpha,\tbeta)$ consisting of 
the $g$-fold symmetric power
\[
  \symg=\Sigma^{\times g}/S_g
\] 
of $\Sigma$, the submanifold $\talpha=\alpha_1\times\dots\times\alpha_g$ of
$\symg$ and the submanifold $\tbeta=\beta_1\times\dots\times\beta_g$ of
$\symg$. We define $\cfminus(\mH)$ as the $\ztwo[U]$-module generated by 
the set $\talpha\cap\tbeta$. A map $\phi\co D^2\lra\symg$ ($D^2\subset\C$) 
is called {\bf Whitney disk} if $\phi(D^2\cap\{Re<0\})\subset\talpha$ 
and $\phi(D^2\cap\{Re>0\})\subset\tbeta$. We call 
$D^2\cap\{Re<0\}$ the {\bf $\afat$-boundary of $\phi$} and
$D^2\cap\{Re>0\}$ the {\bf $\bfat$-boundary of $\phi$}. For two 
points $\xfat,\yfat\in\talpha\cap\tbeta$ we say that a 
Whitney disk {\bf connects $\xfat$ with $\yfat$} if $\phi(i)=\xfat$ 
and $\phi(-i)=\yfat$. Denote by $\pitwo(\xfat,\yfat)$ 
the set of homology classes of Whitney disks that connect $\xfat$ with $\yfat$.
Note that $\pitwo(\xfat,\yfat)$ can be interpreted as a subgroup of 
$H_2(\symg,\talpha\cup\tbeta)$. We endow 
$\symg$ with a symplectic structure~$\omega$. By choosing a path of 
almost complex structures $\mJ_s$ on $\symg$ suitably (cf.~\cite{OsZa01}),
all moduli spaces of holomorphic Whitney disks are Gromov-compact manifolds.
We fix a point $z\in\Sigma\backslash(\afat\cup\bfat)$ and use it to define 
the map 
$
  n_z\co\pitwo(\xfat,\yfat)\lra\Z
$
which assigns to a Whitney disk $\phi$ its intersection number with the
submanifold $V_z=\{z\}\times\symgmo$. In fact, the path of 
almost complex structures $\mJ_s$ is chosen in such a way
that $V_z$ is a complex submanifold of the symmetric product. A path of
almost complex structures for which $V_z$ is a complex submanifold is called 
{\bf $z$-respectful}.
For two points $\xfat$, $\yfat\in\talpha\cap\tbeta$ denote by 
$\mMhat^i_{(\afat,\bfat)}(\xfat,\yfat)$ the set of unparametrized 
holomorphic Whitney disks $\phi$ which connect $\xfat$ with $\yfat$ such that $n_z(\phi)=i$. For a point $\xfat\in\talpha\cap\tbeta$ we define 
\[
 \parminus_\mH\xfat
  =
  \sum_{\yfat\in\talpha\cap\tbeta,i\geq0}
  \!\!\!\!\#\Bigl(\mMhat^{i}_{(\afat,\bfat)}(\xfat,\yfat)\Bigr)\cdot
  U^{i}\yfat
\]
and we extend $\parminus_\mH$ to $\cfminus(\mH)$ as a morphism of 
$\ztwo[U]$-modules. The map $\parminus_\mH$ is a differential. The 
associated homology theory $H_*(\cfminus(\mH),\parminus_\mH)$ is denoted 
by $\hfminus(Y)$ and it is a topological invariant of $Y$. By setting $U=0$, 
we obtain a different flavor of this theory: The associated chain 
module is denoted by $\cfhat(\mH)$ and it can be interpreted as the 
$\ztwo$-module generated by $\talpha\cap\tbeta$. Restricting $\parminus_\mH$
to the module $\cfhat(\mH)$, we obtain a map 
$\parhat_\mH\co\cfhat(\mH)\lra\cfhat(\mH)$, which is still a differential.
In fact, for every $\xfat\in\talpha\cap\tbeta$, the equality
\[
 \parhat_\mH\xfat
  =
  \sum_{\yfat\in\talpha\cap\tbeta}
  \!\!\!\!\#\Bigl(\mMhat^{0}_{(\afat,\bfat)}(\xfat,\yfat)\Bigr)\cdot \yfat 
\]
holds. The homology theory $H_*(\cfhat(\mH),\parhat_\mH)$ will be denoted 
by $\hfhat(Y)$. 
We would like to note that not all Heegaard diagrams are suitable
for defining the Heegaard Floer homology groups. There is an additional
condition that has to be imposed called {\it admissibility}. 
A detailed knowledge of this condition 
is not important in the remainder of the present article since all
constructions are done nicely so that there will never be a 
problem. We advise the interested reader to \cite{OsZa01} .

\subsection{Knot Floer Homology}\label{knotfloerhomology}
Given a knot $K\subset Y$, we can specify a certain subclass of 
Heegaard diagrams.
\begin{definition} \label{knotdiagram} A Heegaard 
diagram $(\Sigma,\afat,\bfat)$ is said to
be {\bf adapted} to the knot $K$ if $K$ is isotopic to a knot lying
in $\Sigma$ and $K$ intersects $\beta_1$ once transversely and is
disjoint from the other $\bfat$-curves.
\end{definition}
Every pair $(Y,K)$ admits a Heegaard diagram adapted to $K$.
Having fixed such a Heegaard diagram $(\Sigma,\afat,\bfat)$, we can encode 
the knot $K$ in a pair of points: After isotoping $K$ onto $\Sigma$, 
we fix a small interval $I$ in $K$ containing the intersection point 
$K\cap\beta_1$. This interval is chosen small enough such 
that $I$ does not contain any other intersections of $K$ with other 
attaching curves. The boundary $\partial I$ lies in the complement
of the attaching circles and consists of two points we denote by $z$ 
and $w$ such that $\partial I=z-w$ as chains. Here, the orientation of 
$I$ is given by the knot orientation. In this way, we associate to the pair
$(Y,K)$ a doubly-pointed Heegaard diagram $(\Sigma,\afat,\bfat,w,z)$.\\
Conversely, given a doubly-pointed Heegaard diagram $(\Sigma,\afat,\bfat,w,z)$,
we denote by $Y$ the manifold represented by the underlying Heegaard diagram.
We connect $w$ with $z$ with an arc $\delta$ in  $\Sigma\backslash(\afat\cup\bfat\backslash\beta_1)$ that crosses 
$\beta_1$, once. Then, we connect $z$ with $w$ in $\Sigma\backslash\bfat$
using an arc $\gamma$. The union $\delta\cup\gamma$ is a knot $K$ we equip
with the orientation such that $\partial\delta=z-w$. Hence, we obtain a 
pair $(Y,K)$. \vspace{0.3cm}\\
Suppose we are given a doubly-pointed Heegaard diagram 
$\mH=(\Sigma,\afat,\bfat,w,z)$. The knot chain module $\cfkminus(\mH)$ is 
the free $\ztwo[U]$-module generated by the intersection points 
$\talpha\cap\tbeta$. Analogous to the definitions given above, 
we define $\mMhat^{(i,j)}_{(\afat,\bfat)}(\xfat,\yfat)$ as the set of 
unparametrized holomorphic Whitney disks $\phi$ that connect $\xfat$ with 
$\yfat$ such that $(n_z(\phi),n_w(\phi))$ equals $(i,j)$. For $\xfat\in\talpha\cap\tbeta$ we define
\[ 
 \partial^{\bullet,-}_{\mH}\xfat 
 =
 \sum_{\yfat\in\talpha\cap\tbeta,j\geq0} 
 \!\!\!\!\#\Bigl(\mMhat^{(0,j)}_{(\afat,\bfat)}(\xfat,\yfat)\Bigr)
 \cdot U^{j}\yfat.
\]
We extend $\partial^{\bullet,-}_{\mH}$ to $\cfkminus(\mH)$ 
as a morphism of $\ztwo[U]$-modules. The associated homology theory 
$H_*(\cfkminus(\mH),\partial^{\bullet,-}_\mH)$ is denoted by 
$\hfkminus(Y,K)$ and is an invariant of the pair $(Y,K)$. By setting 
$U=0$ as before, we obtain a new theory which we denote by $\hfkoo(Y,K)$ 
or, alternatively, $\hfkhat(Y,K)$. It is also
possible to define variants such as $\hfkplus$ and 
$\hfkinfty$. For details we point the reader to \cite{OsZa03}.
\vspace{0.3cm}\\
To justify our notation, observe, that it is possible to {\it swap the roles}
of $z$ and $w$ by defining a differential $\partial^{-,\bullet}_\mH$ via 
\[
 \partial^{-,\bullet}_{\mH}\xfat 
 =
 \sum_{\yfat\in\talpha\cap\tbeta,j\geq0} 
 \!\!\!\!\#\Bigl(\mMhat^{(j,0)}_{(\afat,\bfat)}(\xfat,\yfat)\Bigr)
 \cdot U^{j}\yfat.
\] 
The associated homology theory is denoted by $\hfkmminus(Y,K)$. As in 
the previous case, we can also define the variants $\hfkminfty$ and 
$\mbox{\rm HFK}^{+,\bullet}$.

\section{The Symbol Homology Package}\label{sec:tshp}
\subsection{Whitney Polygons and Symbol Homology}\label{sec:wpash} 
Let $\Sigma$ be a surface of genus $g$ and $\afat_1,\dots,\afat_n$ sets 
of attaching circles on this surface. A map
\[
 \phi
 \co
 D^2
 \lra
 \symg
\]
with $p_i\in\partial D^2$, $i=1,\dots,n$ with boundary conditions 
in $\afat_1,\dots,\afat_n$ is called {\bf Whitney polygon} of 
degree $n$. We may think of $D^2$ itself as a polygon with 
vertices $p_1,\dots,p_n$. For $B=\{\afat_1,\dots,\afat_n\}$ we consider
the disjoint union
\[
 \bigsqcup_{i=2,\dots,|B|}
 B^{\times i}\backslash\Delta_i
\]
where $\Delta_i$ is the diagonal in $B^{\times i}$. Every element
$a\in B^{\times i}\backslash\Delta_i$ specifies boundary conditions on the
edges of the polygon $\phi$ by the following algorithm: For 
$i=1,\dots, n-1$ the $i$-th component of $a$, $\afat$ say, specifies 
that the edge of $\phi$ between the vertices $p_i$ and $p_{i+1}$ 
has to be mapped into $\talpha$. Analogous, the $n$-th component 
of $a$ specifies the boundary condition of the edge between $p_{n}$ 
and $p_1$.\\
We say that two elements $a$, $b$ of $B^{\times i}\backslash\Delta_i$ 
are equivalent if the boundary conditions they specify can be identified 
by a rotation of the polygon. This defines an equivalence relation $\sim$
and we denote by $\mI_B$ the set of equivalence classes
\[
 \Bigl(\bigsqcup_{i=2,\dots,|B|}
 B^{\times i}\backslash\Delta_i\Bigr)/\sim.
\]
We call $\mI_B$ the {\bf index set} or the {\bf set of boundary conditions}. 
For $B'\in\mI_B$ we denote by 
$
 \pitwo(\xfat_1,\dots,\xfat_n;B')
$
the set of homotopy classes of Whitney polygons connecting the 
$\xfat_i$ with boundary conditions given by $B'$. 
We denote by
$
 \gafull_{B'}(\xfat_1,\dots,\xfat_l)
$
the space of $\mJ_s$-holomorphic Whitney polygons that 
connect the $\xfat_i$ with boundary conditions given by $B'$ and 
Maslov-index $\mu$.\\
The set $B'$ specifies boundary conditions on the edges of every
polygon $\phi\in\gafull_{B'}$. We can additionally impose conditions
on the vertices of $\phi$ which is usually indicated by attaching 
points into the notation like for instance $\gafull_{B'}(\dots,\qfat,\dots)$.
If conditions on the vertices are specified we call the space
{\bf pointed}. 
Suppose we are given a pointed moduli space
 $\gafull_{B'}(\dots,\qfat,\dots)$ of $n$-gons. Each vertex of 
the polygons $\phi\in\gafull_{B'}$ can be specified by a
pair $(a_i,a_{i+1})$ of attaching circles $a_i$, $a_{i+1}$ (from $B'$) 
by the following algorithm: The element 
$B'$ specifies boundary conditions on the Whitney polygons as described 
above. If we think of the polygon as sitting in $\R^2$, then the standard
orientation on $\R^2$ and, hence, of the polygon induces an 
orientation on the boundary, i.e.~a preferred direction of travel along the boundary.
The vertex specified by $(a_i,a_{i+1})$ is the vertex sitting between
the edges to which the boundary conditions $a_i$ and $a_{i+1}$ have been
attached, such that traversing through the boundary of the polygon
in the preferred direction of travel, we will pass the vertex by approaching
from $a_i$ and moving over the vertex to $a_{i+1}$. By abuse of notation,
we will call the pair $(a_i,a_{i+1})$ {\bf vertex}. A set $\{(a_i,a_{i+1}),\qfat\}$
consisting of a vertex $(a_i,a_{i+1})$ and a point $\qfat\in\mbT_{a_i}\cap\mbT_{a_{i+1}}$ is called a {\bf pointing}.
\begin{definition}\label{def:generators}
Let $\mA$ be the symbol $\genA$ consisting of the following 
data:
\begin{enumerate}
\item[(1)] We have that $A$ represents an unparametrized moduli space 
$\gafull_{B'}$ where $B'$ equals $[(a_1,\dots,a_n)]$ which is an 
element of $\mI_B$. Furthermore, we require that $\mu\leq 2$ if $n=2$ 
and $\mu\leq 1$, otherwise.
\item[(2)] The sets $\pointed$ and $\fout$ consist of a collection of 
pointings for $A$. We call $\pointed$ the {\bf set of pointings} and $\fout$ the
set of {\bf flow-out vertices}.
\item[(3)] The set $\fin$ is called the set of 
{\bf flow-in vertices} and consists of a collection of vertices of $A$. 
\end{enumerate}
We require that each vertex of $A$ 
appears in the set $\pointed$, in the set $\fin$
and in the set $\fout$ at most once.
If each vertex of $A$ is either a pointed, a flow-in or a flow-out 
vertex, then we call $\mA$ a {\bf pre-generator}.  A pre-generator 
is called a {\bf generator} if $\#(\fin)>0$ and $\#(\fout)=1$. A 
pre-generator with no flow-in vertices and no flow-out vertices is 
called {\bf fully pointed}.
\end{definition}
For a pre-generator $\mA=A_{(\pointed,\fin,\fout)}$ we 
define $\pi(\mA)$ to be the moduli space we obtain
from $A$ after attaching the slot points 
given in $\pointed$ and $\fout$ as boundary conditions 
to the vertices of the polygons in $A$. Now consider the 
commutative free polynomial $\ztwo$-algebra
generated by fully pointed pre-generators, where we denote the sum 
by $+$ and the product by $\cp$. For each pre-generator $\mA$ with 
the property that $\pi(\mA)=\emptyset$ we introduce the relation 
$\mA=0$. Furthermore, we introduce the relation $\mA=1$ 
if $\pi(\mA)\not=\emptyset$ or if the elements of $A$ are bigons
with Maslov-index $0$. The algebra we obtain after introducing these 
relations will be denoted by $\mbf$ and  called the
{\bf coefficient algebra}. Then, we consider 
the free non-commutative $\mbf$-algebra generated by the 
set of pre-generators which are not fully pointed, using the disjoint union
$+$ as the sum and the Cartesian product $\times$ as the product operation.
Denote by $\mT$ 
the algebra we obtain after introducing the relation $\mA=0$ for every
pre-generator $\mA$ for which $\pi(\mA)$ is empty.
We denote by $\mThat$ the set $\mT\cup\{\ohat\}$.
\begin{definition}\label{def:semialgebra} Denote by $\mR$ a ring.
\begin{enumerate}
\item[(a)] A set $P$ is a {\bf $\mR$-semimodule} if $P$ is a 
semigroup together with a map
\[
 \bolddot\co \mR\times P\lra P
\]
such that for all $v,w\in P$ and $a,b\in\mR$ we have that 
$1_{\mR}\cp v=v$, $a\cp (v+w)=a\cp v+a\cp w$, $a\cp(b\cp v)=(ab)\cp v$ 
and $(a+b)\cp v=a\cp v + b\cp v$.
\item[(b)] A set $(P,+)$ is a {\bf $\mR$-semialgebra} if $P$ 
is a $\mR$-semimodule with a multiplication, $\times$ say, 
which is associative and distribute with  respect to $+$ 
and such that for $a,b\in\mR$ and $v,w\in P$ we have  that 
$(a\cp v)\times(b\cp w)$ equals $ab\cp (v\times w)$.
\end{enumerate}
A $\mR$-semialgebra will also be called {\bf semialgebra} if the 
ring $\mR$ is specified in the context.
\end{definition}
It will be our goal to define a $\mbf$-semialgebra structure 
on $\mThat$. We will do this in the following, but we need 
to provide a couple of definitions beforehand so that we will be 
able to give clean definitions. The semialgebra structure will 
be modeled on the disjoint union and the Cartesian product, but 
the they will also include modifying the 
decorations (cf.~\S\ref{sec:eai}).
\begin{definition}\label{def:operators} Suppose we are given a 
pre-generator $\mA=\genA$.
\begin{itemize}
\item[(a)] For a set $Q$ consisting of pointings (like for 
instance $\fout$), we define $\mD(Q)$ as the set we obtain from 
$Q$ by forgetting the slot points.
\item[(b)] Given a set $Q$ consisting of pointings, we define 
the operator $\mKind$ as follows:
\[ 
\mKind(\pointed,\fin,\fout)
=
(\pointed\cup Q,\fin\backslash\mD(Q),\fout).
\]
\item[(c)] We define the operator $\mKoutq$ as follows:
\[
\mKoutq(\pointed,\fin,\fout)
=
(\pointed\cup\fout,\fin,\emptyset).
\]
\item[(d)] Given another pre-generator 
$\mB=B_{(\pointedprime,\finprime\foutprime)}$,
we call the vertices 
in $\mD(\fout)\cap\finprime$ {\bf the common 
vertices of $\mA$ and $\mB$}.
\end{itemize}
Correspondingly, we 
define $\mKind(\mA)=A_{\mKind(\pointed,\fin,\fout)}$ and $\mKoutq(\mA)=A_{\mKoutq(\pointed,\fin,\fout)}$.
\end{definition}
Let $\mC=\mC_1\times\dots\times\mC_k$ be a product 
of pre-generators. Given a vertex $s$, we define
\[
\inmultc(s)
=
\#
\bigl\{
q\in\bigsqcup_{i=1}^k\fin(\mC_i)
\,
|
\,
q=s
\bigr\}
\]
which we call the {\bf inward multiplicity of $\mC$ at $s$}. 
Correspondingly, we define
\[
\outmultc(s)
=
\#
\bigl\{
q\in\bigsqcup_{i=1}^k\mD(\fout(\mC_i))
\,
|
\,
q=s
\bigr\}
\]
and call it the {\bf outward multiplicity of $\mC$ at $s$}.
\begin{lem}\label{multres01} For a product of pre-generators 
$\mC=\mC_1\times\dots\times\mC_k$ we have that
\[
  \inmultc(s)=\sum_{i=1}^k\mm^{\mC_i}(s)
  \;\;\mbox{\rm and }\;\;
  \outmultc(s)=\sum_{i=1}^k\mm_{\mC_i}(s).
\]
\end{lem}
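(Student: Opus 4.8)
The plan is to unwind both sides of each claimed equality directly from the definitions and observe that they are literally the same count, once we sort the disjoint union by which factor $\mC_i$ an element comes from. First I would treat the inward-multiplicity equality. By definition, $\inmultc(s)$ is the cardinality of the set of those $q$ in the disjoint union $\bigsqcup_{i=1}^k\fin(\mC_i)$ with $q=s$. Since this is a \emph{disjoint} union, an element of it is precisely a pair $(i,q)$ with $1\le i\le k$ and $q\in\fin(\mC_i)$, and the condition ``$q=s$'' picks out those pairs whose second coordinate equals $s$. Hence the set splits as the disjoint union over $i$ of the sets $\{q\in\fin(\mC_i)\mid q=s\}$, and taking cardinalities gives
\[
  \inmultc(s)=\sum_{i=1}^k \#\bigl\{q\in\fin(\mC_i)\mid q=s\bigr\}
  =\sum_{i=1}^k \inmultc[\mC_i](s),
\]
where the last equality is just the definition of $\mm^{\mC_i}(s)$ applied to the one-factor product $\mC_i$ (for a single pre-generator, $\bigsqcup_{j=1}^1\fin(\mC_i)=\fin(\mC_i)$). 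Note I am writing $\inmultc[\mC_i]$ informally for $\mm^{\mC_i}$ as in the statement; in the actual text I would just write $\mm^{\mC_i}(s)$.

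Next I would do the outward-multiplicity equality by the same argument, with $\fin(\mC_i)$ replaced throughout by $\mD(\fout(\mC_i))$: the set defining $\outmultc(s)$ is the set of $q$ in $\bigsqcup_{i=1}^k\mD(\fout(\mC_i))$ with $q=s$, which decomposes as the disjoint union over $i$ of $\{q\in\mD(\fout(\mC_i))\mid q=s\}$, and summing cardinalities yields $\sum_{i=1}^k\mm_{\mC_i}(s)$. The only point worth spelling out here is that the operator $\mD$, which forgets slot points (Definition~\ref{def:operators}(a)), is applied factorwise inside the disjoint union, so $\mD\bigl(\bigsqcup_i\fout(\mC_i)\bigr)=\bigsqcup_i\mD(\fout(\mC_i))$; this is immediate since $\mD$ acts on each pointing individually.

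There is essentially no obstacle: the lemma is a bookkeeping identity whose entire content is that ``count over a disjoint union = sum of counts over the pieces.'' The only thing one must be slightly careful about is not to conflate the abstract vertex $s$ with its occurrences as second coordinates of pairs in the disjoint union, i.e.\ to distinguish the set-theoretic disjoint union $\bigsqcup$ from an ordinary union; writing elements as pairs $(i,q)$ makes this transparent. I would therefore present the proof in two short displayed computations as above and leave it at that.
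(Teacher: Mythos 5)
Your proof is correct and takes the same route as the paper, which simply notes that the identity follows immediately from the definitions of the inward and outward multiplicities; you have merely spelled out the bookkeeping (decomposing the disjoint union factorwise and summing cardinalities) that the paper leaves implicit.
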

\begin{proof} This readily follows from the definition of 
the inward multiplicities and the outward multiplicities.
\end{proof}
Now we will define a sum on $\mThat$: First, 
we set 
$f\cp\mA\boxplus\ohat=f\cp\ohat\boxplus\mA=\ohat$ 
for all $\mA\in\mThat$ and all $f\in\mathbb{F}$. 
Second, for given products of pre-generators 
$\mC=\mC_1\times\dots\times\mC_k$, $\mR=\mR_1\times\dots\times\mR_l$
and elements $f,g\in\mbf$ we define
\[
  f\cp\mR\boxplus g\cp\mC
  =
  \left\{
  \begin{array}{cll}
  f\cp\mR+g\cp\mC 
  &\mbox{\rm , if }
  \inmultc\equiv\inmultr\;\mbox{\rm and }\;\outmultc\equiv\outmultr\\
  \ohat
  & \mbox{\rm , otherwise} &
  \end{array}
  \right. .
\]
Finally, for elements $\sum_{i=1}^k f_i\cp q_i$ and 
$\sum_{j=1}^l g_j\cp r_j$ where the $q_i$ and $r_j$ are defined as 
products of pre-generators and the $f_i$ and $g_j$ are elements of 
$\mathbb{F}$, we have that
\[
  \Bigl(\sum_{i=1}^k f_i\cp q_i\Bigr)
  \boxplus
  \Bigl(\sum_{j=1}^l g_j\cp r_j\Bigr)
  =
  \Bigl(\bigboxplus{i=1,\dots,k} f_i\cp q_i\Bigr)
  \boxplus
  \Bigl(\bigboxplus{j=1,\dots,l} g_j\cp r_j\Bigr)
\]
These definitions uniquely define a map
\[
 \boxplus
 \co
 \mThat
 \times\mThat
 \lra\mThat.
\]
As we see from the definition, whether a $\boxplus$-sum of elements 
equals $\ohat$ or not depends on the decorations of the polygons. If 
the $\boxplus$-sum does not equal $\ohat$, we say that {\bf the data of 
the summands match}.
\begin{prop}\label{intro:plus} The map $\boxplus$ is commutative 
and associative, i.e.~$(\mThat,\boxplus)$ is a commutative semigroup.
\end{prop}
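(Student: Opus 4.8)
The plan is to verify that $\boxplus$ inherits commutativity and associativity from the underlying additive structures, with the only genuine content being the bookkeeping of when a sum equals $\ohat$ versus when the summands add up honestly. First I would dispose of commutativity. On elements both distinct from $\ohat$, the definition of $f\cp\mR\boxplus g\cp\mC$ is symmetric in the two summands: the condition $\inmultc\equiv\inmultr$ and $\outmultc\equiv\outmultr$ is symmetric, and in the matching case the result $f\cp\mR+g\cp\mC$ lies in the free $\mbf$-semialgebra where $+$ is already commutative; in the non-matching case both orders give $\ohat$. For general sums $\sum f_i\cp q_i$ and $\sum g_j\cp r_j$ the operation is reduced to iterated $\boxplus$ of monomials via the $\bigboxplus$-notation, so commutativity at the monomial level propagates. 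Finally, the convention $f\cp\mA\boxplus\ohat=f\cp\ohat\boxplus\mA=\ohat$ is visibly symmetric, so $\ohat$ behaves the same way on either side.

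For associativity I would split into cases according to how many of the three arguments equal $\ohat$. If any one of them is $\ohat$, both bracketings collapse to $\ohat$ by the absorbing convention, so these cases are immediate. The substantive case is three elements $x=\sum f_i\cp q_i$, $y=\sum g_j\cp r_j$, $z=\sum h_k\cp s_k$ each $\neq\ohat$, each expressed as a $\boxplus$-sum of monomials $f\cp q$ with $q$ a product of pre-generators. Using the $\bigboxplus$ reduction it suffices to check $(a\boxplus b)\boxplus c = a\boxplus(b\boxplus c)$ for monomials $a=f\cp\mR$, $b=g\cp\mC$, $c=h\cp\mD$. Here Lemma~\ref{multres01} does not directly enter, but the key observation is that the matching relation ``$\inmultc\equiv\inmultr$ and $\outmultc\equiv\outmultr$'' is an equivalence relation on monomials: each monomial $q$ determines two functions $\mm^q, \mm_q$ on vertices (well-defined since $q$ is a product of pre-generators), and two monomials match precisely when these pairs of functions coincide. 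So matching partitions the set of monomials into classes, and $\boxplus$ of two monomials in the same class is their free-semialgebra sum while $\boxplus$ across classes is $\ohat$.

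With that reformulation, associativity on monomials follows by a three-way case split on whether $a,b,c$ lie in one common matching class, in two distinct classes, or in three distinct classes. If all three share a class, both bracketings equal $f\cp\mR+g\cp\mC+h\cp\mD$, using associativity of $+$ in the free $\mbf$-semialgebra — and one must note that $f\cp\mR+g\cp\mC$ still represents a monomial in the same class (it is a single element of that class up to the additive structure, so the multiplicity functions are unchanged), hence it still matches $h\cp\mD$; this is the one point needing a line of care. If at least two of the three lie in different classes, then in whichever order we bracket, some intermediate $\boxplus$ is taken across classes and produces $\ohat$, after which the absorbing convention forces the whole expression to $\ohat$; checking that every bracketing indeed hits a cross-class pair is a short finite verification. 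The main obstacle, such as it is, is precisely this last point: ensuring that forming the honest sum $f\cp\mR+g\cp\mC$ of two matching monomials does not change the multiplicity data, so that the matching relation is compatible with the partial addition — once that is recorded, the rest is a routine case analysis and the lifting from monomials to general $\boxplus$-sums via the $\bigboxplus$-notation is purely formal.
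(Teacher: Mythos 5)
Your proposal is correct and follows essentially the same route as the paper: commutativity is immediate from the symmetric definition, and associativity comes down to observing that for monomials $f\cp\mR$, $g\cp\mC$, $h\cp\mD$ either bracketing is $\ohat$ unless all inward and outward multiplicity functions agree, in which case both reduce to the ordinary sum in $(\mT,+)$. Your equivalence-class framing and the explicit $\ohat$-absorption cases are just a more detailed bookkeeping of the paper's terser argument, so there is nothing further to add.
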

\begin{proof} 
Commutativity of $\boxplus$ readily follows from its definition. To prove 
associativity, suppose we are given elements
\[ 
  a
  =
  \sum_{i=1}^k f_i\cp q_i,
  \;\;
  b=\sum_{j=1}^l g_j\cp r_j,
  \;\;\mbox{\rm and}\;\;
  c=\sum^m_{n=1} h_n\cp s_n
\]
where the $q_i$, $r_j$ and $s_n$ are suitable products of pre-generators 
and the $f_i$, $g_j$ and $h_n$ suitable elements in the coefficient 
algebra $\mbf$. Then, $(a\boxplus b)\boxplus c$ equals 
$\ohat$ unless all the inward multiplicities of the $q_i$, $r_j$ 
and $s_n$ agree and all outward multiplicities of the $q_i$, $r_j$ 
and $s_n$ agree. But this is equivalent to the statement that 
$a\boxplus(b\boxplus c)$ does not equal $\ohat$. Hence, if the 
data match in the specified sense, we have that
\[
 (a\boxplus b)\boxplus c
 =
 (a+b)+c 
 =
 a+(b+c)
 =
 a\boxplus(b\boxplus c)
\]
which completes the proof.
\end{proof}
The definition of a product on the set $\mThat$ is slightly more 
complicated. We point the reader to \S\ref{sec:eai} for a discussion
of the idea behind the product. As before, for 
all $\mA\in\mThat$ and $f\in\mbf$ we define
$(f\cp\mA)\boxtimes\ohat=\ohat\boxtimes(f\cp\mA)=\ohat$. For 
pre-generators $\mA$ and $\mB$ with 
$\mD(\fout(\mA))\subset\finprime(\mB)$ and 
elements $f$, $g\in\mbf$, we have that
\begin{equation}
 (f\cp\mA)\boxtimes
 (g\cp\mB)
 =
 \begin{cases}
 \bigl(f\cp\mKoutq(\mA)\bigr)
 \times
 \bigl(g\cp\mKinq(\mB)\bigr),\;\mbox{\rm if $\mKoutq(\mA),\mKinq(\mB)\not\in\mbf$}\\
 \bigl(f\cp\mKoutq(\mA)\bigr)
 \cp
 \bigl(g\cp\mKinq(\mB)\bigr),\;\mbox{\rm if $\mKoutq(\mA)\in\mbf$}\\
 \bigl(f\cp\mKoutq(\mA)\bigr)
 \cp
 \bigl(g\cp\mKinq(\mB)\bigr),\;\mbox{\rm if $\mKinq(\mB)\in\mbf$}\\

\end{cases}
\label{eq:boxdouble}
\end{equation}
and if $\mD(\fout(\mA))\not\subset\finprime(\mB)$, then we set
$
(f\cp\mA)\boxtimes
 (g\cp\mB)
 =\ohat
$.
Now, given products of pre-generators 
$\mC=\mC_1\times\dots\times\mC_k$ and
$\mR=\mR_1\times\dots\times\mR_l$ and elements $f,g\in\mbf$, we 
require that the product
$
 (f\cp\mR)
 \boxtimes
 (g\cp\mC)
$
equals
\[
 (fg)\cp\mR_1\times\dots\times R_{l-1}
 \cp
 (\mR_l\boxtimes \mC_1)
 \cp
 \mC_2\times\dots\times\mC_k
\]
if $\mKoutq(\mR_l)$ and $\mKinq(\mC_1)$ are in $\mbf$ and we
require that it equals
\[
 (fg)\cp\mR_1\times\dots\times\mR_{l-1}
 \times
 (\mR_l\boxtimes\mC_1)
 \times
 \mC_2\times\dots\times\mC_k,
\]
otherwise. Finally, given elements $\mA$, $\mB$ 
and $\mC$ in $\mThat$ we set
\begin{eqnarray*}
  \mA\boxtimes(\mB+\mC)
  &=&
  (\mA\boxtimes \mB)
  \boxplus
  (\mA\boxtimes \mC)\\
  (\mB+\mC)\boxtimes\mA
  &=&
  (\mB\boxtimes \mA)
  \boxplus
  (\mC\boxtimes \mA).
\end{eqnarray*}
These definitions provide a map
\[
 \boxtimes\co\mThat\times\mThat\lra\mThat.
\]
From the definition we see that whether a $\boxtimes$-product 
of elements equals $\ohat$ or not depends on
the decorations of the polygons. If the $\boxtimes$-product of 
elements does not equal $\ohat$, we say 
that {\bf the data of the factors match}, or simply that 
{\bf the data match}.
\begin{prop}\label{intro:times} The map $\boxtimes$ is associative
and $\boxplus$-bilinear, i.e.~the equalities
\begin{eqnarray*}
 \mP
 \boxtimes
 \bigl(
 f\cp\mR\boxplus g\cp\mC
 \bigr)
 &=&
 f\cp(\mP\boxtimes\mR)
 \boxplus
 g\cp(\mP\boxtimes\mC)\\
 \bigl(
 f\cp\mR\boxplus g\cp\mC
 \bigr)
 \boxtimes
 \mP
 &=&
 f\cp(\mR\boxtimes\mP)
 \boxplus
 g\cp(\mC\boxtimes\mP)
\end{eqnarray*}
hold for $\mP$, $\mR$, $\mC\in\mThat$ and $f,g\in\mbf$. Hence, the triple
$(\mThat,\boxplus,\boxtimes)$ is a $\mbf$-semialgebra.
\end{prop}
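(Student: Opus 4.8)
The plan is to prove the two displayed identities; the concluding assertion then follows at once, since $(\mThat,\boxplus)$ already carries the evident $\mbf$-semimodule structure inherited from the free $\mbf$-algebra out of which $\mThat$ was constructed. I will repeatedly use three facts baked into the definitions of \S\ref{sec:wpash}: $\ohat$ is absorbing for $\boxplus$ and $\boxtimes$, so every case involving $\ohat$ is immediate; $\boxtimes$ distributes over $+$ and carries coefficients through as $(f\cp\mR)\boxtimes(g\cp\mC)=(fg)\cp(\mR\boxtimes\mC)$; and on products of pre-generators $\boxtimes$ modifies only the two adjacent end factors, leaving the remaining ones as a $\times$-product. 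Together these reduce every verification to products of pre-generators with trivial coefficients, and ultimately to single pre-generators.

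For $\boxplus$-bilinearity, if $f\cp\mR\boxplus g\cp\mC$ is an honest sum (the multiplicity functions of $\mR$ and $\mC$ agree) both identities follow by expanding $\boxtimes$ over $+$ and pulling out coefficients. The remaining case is $f\cp\mR\boxplus g\cp\mC=\ohat$, where I must show the right-hand sides also equal $\ohat$, i.e.\ that one-sided multiplication by a fixed $\mP$ sends a mismatched pair to a mismatched pair (or to one involving $\ohat$). For this I would use Lemma~\ref{multres01} together with the explicit effect of $\mKoutq$ (emptying a flow-out set) and $\mKinq$ (deleting the common vertices $\mD(\fout)$) to express the inward and outward multiplicity functions of $\mP\boxtimes\mR$ (and of $\mR\boxtimes\mP$) in terms of those of $\mR$ plus a correction coming from the decorations of the two factors that are glued, and then check that this correction neither creates nor destroys a mismatch. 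The case in which an end factor collapses into $\mbf$ is covered by the same computation, because such a factor necessarily has empty flow-in or empty flow-out.

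For associativity it remains to treat three single pre-generators $\mA,\mB,\mC$. Unwinding $(\mA\boxtimes\mB)\boxtimes\mC$, the first product replaces $\mA$ by $\mKoutq(\mA)$ and $\mB$ by a pre-generator whose flow-in has lost the common vertices of $\mA$ and $\mB$ and whose flow-out is unchanged, and the second product then applies $\mKoutq$ to that pre-generator and replaces $\mC$ by $\mC'$, the reduction of $\mC$ relative to $\mB$. Unwinding $\mA\boxtimes(\mB\boxtimes\mC)$, the inner product replaces $\mB$ by $\mKoutq(\mB)$ and $\mC$ by the same $\mC'$, and the outer product then removes the common vertices of $\mA$ and $\mB$ from the flow-in of $\mKoutq(\mB)$. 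Thus both sides have the form $\mKoutq(\mA)\times(\text{middle})\times\mC'$, where the middle factor is $\mKoutq(\mKinq(\mB))$ on one side and $\mKinq(\mKoutq(\mB))$ on the other; these coincide because $\mKoutq$ and $\mKind$ commute on every pre-generator for any set $Q$, which is immediate from Definition~\ref{def:operators} — one enlarges $\pointed$ by $\fout$, the other enlarges $\pointed$ by $Q$ and shrinks $\fin$ by $\mD(Q)$, independent operations. Associativity of $\times$ on $\mT$ then equates the two triple products, and the same argument works factorwise for products of pre-generators, since $\boxtimes$ disturbs only the adjacent end factors.

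The real work, I expect, is the bookkeeping of the degenerate cases — which is also where I would be most careful. There are two kinds. First, an intermediate pre-generator $\mKoutq(\cdot)$ or $\mKinq(\cdot)$ may become fully pointed and so be absorbed into the coefficient algebra, turning a factor into a scalar attached by $\cp$ rather than $\times$; the reconciliation is that the condition forcing this (empty flow-in after the operation, hence no contribution to the triple product) together with the coefficient pull-out makes the two groupings agree, the common pulled-out scalar being $\mKoutq(\mKinq(\mB))=\mKinq(\mKoutq(\mB))$ by the commutation identity. Second, one must check that the two groupings are simultaneously $\ohat$ or simultaneously not: this reduces to the statement that the containments ``$\mD(\fout(\mA))\subset\finprime(\mB)$'' and ``$\mD(\foutprime(\mB))\subset\fin(\mC)$'' demanded by one grouping are \emph{exactly} those demanded by the other, which holds because $\mKoutq$ leaves the flow-in of $\mB$ alone while $\mKinq$ leaves its flow-out alone, so the intermediate factors present to their neighbours the same flow-in/flow-out data that $\mB$ did. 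Together with the matching-preservation step from the bilinearity argument, these checks close out every case, so $\boxtimes$ is associative and $(\mThat,\boxplus,\boxtimes)$ is a $\mbf$-semialgebra.
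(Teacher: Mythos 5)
Your proposal is correct and follows essentially the same route as the paper's proof: reduce to ($\times$-products of) pre-generators, prove associativity via the commutation $\mKoutq(\mKinq(\mB))=\mKinq(\mKoutq(\mB))$ together with the compatibility of $\boxtimes$ with $\times$ on end factors, and prove $\boxplus$-bilinearity by using Lemma~\ref{multres01} to show that multiplying by a fixed $\mP$ shifts the inward/outward multiplicities of $\mR$ and $\mC$ by the same correction, so matching or mismatching of data is preserved. Your handling of the fully-pointed and $\ohat$ degenerate cases matches (and is if anything slightly more explicit than) the paper's brief treatment.
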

\begin{proof} 
For pre-generators $\mA$, $\mB$ and $\mC$
the following equalities hold:
\begin{eqnarray*}
  (\mA\times\mB)
  \boxtimes
  \mC
  &=&
  \mA
  \times
  (\mB\boxtimes\mC)\\
  \mA
  \boxtimes
  (\mB\times\mC)
  &=&
  (\mA\boxtimes\mB)
  \times
  \mC
\end{eqnarray*}
Hence, if all data match and all factors that appear in \eqref{eq:chain01} 
are not fully pointed, we have the following chain of equalities
\begin{equation}
\begin{array}{ccl}
 \bigl(\mA\boxtimes\mB\bigr)\boxtimes\mC
 &=&
 \Bigl(
 \mKoutq(\mA)
 \times
 \mKinq(\mB)
 \Bigr)
 \boxtimes\mC\\
 &=&
 \hspace{0.2cm}
 \mKoutq(\mA)
 \times
 \bigl(
 \mKinq(\mB)
 \boxtimes\mC
 \bigr)\\
 &=&
 \hspace{0.2cm}
 \mKoutq(\mA)
 \times
 \mKoutq(\mKinq(\mB))
 \times
 \mKinr(\mC)\\
 &=&
 \hspace{0.2cm}
 \mKoutq(\mA)
 \times
 \mKinq(\mKoutq(\mB))
 \times
 \mKinr(\mC)\\
 &=&
 \mA\boxtimes\bigl(\mKoutq(\mB)\times\mKinr(\mC)\bigr)\\
 &=&
 \mA\boxtimes\bigl(\mB\boxtimes\mC\bigr)
\end{array}\label{eq:chain01}
\end{equation}
where the fourth equality holds since 
$\mKoutq(\mKinq(\mB))=\mKinq(\mKoutq(\mB))$. If some of the factors 
that appear in \eqref{eq:chain01} are 
fully pointed, then some of the $\times$-symbols 
have to be replaced by $\cp$, but the calculation remains essentially 
the same. If some of the data do not match, then both
$(\mA\boxtimes\mB)\boxtimes\mC$ and $\mA\boxtimes(\mB\boxtimes\mC)$ 
equal $\ohat$. Hence, we have proved associativity for products of 
pre-generators. The general statement can easily be derived from this 
special case. This proves associativity.\\
To prove the $\boxplus$-linearity, suppose we are given $\mP$, $\mR$ 
and $\mC$ which are all products of pre-generators, i.e.~
\begin{eqnarray*}
\mP&=&\mP_1\times\dots\times\mP_k \\
\mR&=&\mR_1\times\dots\times\mR_l \\
\mC&=&\mC_1\times\dots\times\mC_m.
\end{eqnarray*}
The product $\mP\boxtimes (\mR\boxplus\mC)$ equals $\ohat$ if the 
data of $\mR$ and $\mC$ do not match. This means there is a vertex 
$s$ such that one of the following equalities is violated
\begin{eqnarray}
 \inmultc(s)=\inmultr(s)\label{eq:vio1}\\
 \outmultc(s)=\outmultr(s)\label{eq:vio2}.
\end{eqnarray}
We would like to see that in each of these cases we have
\begin{equation}
 (\mP\boxtimes\mR)\boxplus (\mP\boxtimes\mC)
 =
 \ohat.\label{eq:goal}
\end{equation}
Assuming a violation of (\ref{eq:vio1}), we 
may suppose without 
loss of generality the inequality 
$\inmultr(s)>\inmultc(s)$ to hold. Recall 
from its definition that
\begin{eqnarray*}
 \mP\boxtimes\mR
 &=&
 \begin{cases}
 (\mP\boxtimes\mR_1)\cp\mR_2\times\dots\times\mR_l 
 & 
 \mbox{\rm , if }
 \mKoutq(\mP), \mKinq(\mR_1)\in\mbf\\
 (\mP\boxtimes\mR_1)\times\mR_2\times\dots\times\mR_l 
 & 
 \mbox{\rm otherwise}
 \end{cases}
 \\
 \mP\boxtimes\mC
 &=&
 \begin{cases}
 (\mP\boxtimes\mC_1)\cp\mC_2\times\dots\times\mC_m 
 & 
 \mbox{\rm , if } 
 \mKoutq(\mP), \mKinq(\mC_1)\in\mbf\\
 (\mP\boxtimes\mC_1)\times\mC_2\times\dots\times\mC_m 
 & 
 \mbox{\rm otherwise}
 \end{cases}.
\end{eqnarray*}
Supposing that the data of $\mP$ and $\mR_1$ as well as the data 
of $\mP$ and $\mC_1$ match (the other cases are uninteresting), this 
implies
\[
 \mathfrak{m}^{\mP\boxtimes\mR_1}(s)-\mathfrak{m}^{\mR_1}(s)
 =
 \mathfrak{m}^{\mP\boxtimes\mC_1}(s)-\mathfrak{m}^{\mC_1}(s).
\]
By Lemma~\ref{multres01} we conclude
$
 \mm^{\mP\boxtimes\mR}(s)
 >
 \mm^{\mP\boxtimes\mC}(s)
$
which shows that (\ref{eq:goal}) holds in this case. Assuming a violation 
of (\ref{eq:vio2}), there are two cases to consider. If either 
the data of $\mP$ and $\mR_1$, or the data of $\mP$ and $\mC_1$ do not 
match, the equality (\ref{eq:goal}) is satisfied. If the data of 
$\mP$ and $\mR_1$ as well as the data of $\mP$ of $\mC_1$ match, we 
have
\[
  \mm_{\mP\boxtimes\mR}(s)
  =
  \mm_{\mR}(s)
  >
  \mm_{\mC}(s)
  =
  \mm_{\mP\boxtimes\mC}(s)
\]
which implies (\ref{eq:goal}). 

Supposing that both 
(\ref{eq:vio1}) and (\ref{eq:vio2}) are fulfilled, we have the following 
chain of equalities
\[
  \mP\boxtimes (\mR\boxplus\mC)
  =
  \mP\boxtimes (\mR+\mC)
  =
  (\mP\boxtimes\mR)
  \boxplus
  (\mP\boxtimes\mC).
\]
This shows that 
the product $\boxtimes$ is $\boxplus$-linear in the second component. An analogue
line of arguments shows the linearity in the first component.
\end{proof}
\begin{definition} Let $A$ be a $R$-semialgebra and $B$ a $T$-semialgebra.
A map $\phi\co A\lra B$ is called a {\bf $(R,T)$-semialgebra morphism} if
there is a morphism $\psi\co R\lra T$ such that
\begin{eqnarray*}
 \phi(r\cp a)
 &=&
 \psi(r)\cp\phi(a)\\
 \phi(a+b)
 &=&
 \phi(a)+\phi(b)\\
 \phi(a\times b)
 &=&
 \phi(a)\times\phi(b)
\end{eqnarray*}
for $a\in A$, $b\in B$ and $r\in R$. If $R=T$, then we call $\phi$
an {\bf $R$-semialgebra morphism} and in case the coefficients are defined 
in the context just {\bf semialgebra morphism}.
\end{definition}
Define a map
\begin{equation}
 \Phi
 \co
 (\mThat,+,\times)
 \lra
 (\mThat,\boxplus,\boxtimes)
 \label{eq:gensymbal}
\end{equation}
by sending $+$ to $\boxplus$, $\times$ to $\boxtimes$ and
by requiring that 
$\Phi(\ohat)=\ohat$. This is a $\mbf$-semialgebra morphism. 
The image
of this morphism, which is a $\mbf$-semialgebra, will be the set we 
are interested in.
\begin{definition}\label{def:symalg}
We denote by $\mG$ the set of all generators and denote by $\mQ$ 
the subalgebra of $\mT$ generated by the elements of $\mG$. We denote 
by $\mS$ the set $\Phi(\mQ)\cap\mT$ and by $\mShat$ the 
set $\Phi(\mQ)$. We call $\mShat$ the {\bf symbol algebra}.
\end{definition}
Alternatively, we can think of $\mShat$ as the 
$\mbf$-subsemialgebra of $(\mThat,\boxplus,\boxtimes)$ which is 
generated by the elements of $\mG$.\vspace{0.3cm}\\
\subsection{The Differential on the Symbol Algebra}\label{thediff}
Recall, that a generator is a moduli space with
auxiliary data attached to it. As such, we can look at its
codimension-$1$-boundary. The moduli spaces of Whitney polygons are
Gromov compact manifolds, because of the existence of an energy bound
which was shown in \cite{OsZa01}. Hence, approaching the boundary
of a moduli space of Whitney polygons --~a priori~-- disks break and
spheres bubble off. In our case, bubbling can be ruled out easily due
to the fact that holomorphic spheres in the symmetric product all have
non-zero intersection number $n_z$. Our goal will be to use Gromov's
notion of codimension-$1$-boundary to introduce a map on the
symbol algebra, whose square vanishes identically:
We define $\parsh(\ohat)=0$ for a start. Now suppose we are given a 
generator $\mA=A_{(\pointed,\fin,\fout)}$. 
We define $\parcoone(\pi(\mA))$ as the codimension-$1$ boundary 
of $\pi(\mA)$ (see~\S\ref{sec:wpash} for a definition of $\pi$). 
We know from Gromov 
compactness that the boundary 
components are Cartesian products of moduli spaces of Whitney 
polygons. Each of the boundary components is obtained in the 
following way. Each pair $(a,b)$ with $a$, $b\in B'$ such 
that $a\not=b$ specifies a pair of edges in the polygon. Here,
traversing along the boundary in the preferred direction (i.e.~given by
the orientation induced on the boundary) starting
at the flow-out vertex, we first move over $b$ and then over $a$.
Given such a pair $(a,b)$, we choose a path $\gamma$ from 
the $a$-boundary to the $b$-boundary and then shrink it to a point. The 
polygon will 
split into two polygons $A_1$ and $A_2$, where each of these polygons 
carryies a new vertex, $v_1$ say for $A_1$ and $v_2$ say 
for $A_2$ (cf.~Figure~\ref{Fig:break}). These
polygons represent maps $\phi_i$, $i=1,2$.
\begin{figure}[t!]
\definecolor{myred}{HTML}{B00000}
\labellist\small\hair 2pt
%
%
\pinlabel{$q_2$} [br] at 118 920
\pinlabel{$q_6$} [bl] at 562 920
\pinlabel{$q_1$} [r] at 38 841
\pinlabel{$p_1$} [l] at 650 841
\pinlabel{$A$} at 332 787
\pinlabel{$b'$} [l] at 707 745
\pinlabel{$q_5$} [l] at 643 728
\pinlabel{$q_3$} [tr] at 118 636
\pinlabel{$q_4$} [tl] at 560 636
\pinlabel{{\color{myred} $\gamma$}} [t] at 126 533
\pinlabel{$a$} [t] at 210 571
\pinlabel{$b$} [t] at 259 571
\pinlabel{$a'$} [t] at 517 571
\pinlabel{{\color{myred} $\mu$}} [t] at 641 556
%
%
\pinlabel{$q_6$} [bl] at 916 640
\pinlabel{$p_1$} [l] at 950 577
\pinlabel{$q_5$} [l] at 1100 551
\pinlabel{$A^{a'b'}_2$} at 824 551
\pinlabel{$q'$} [l] at 902 478
\pinlabel{$q'$} [r] at 1025 478
\pinlabel{$q_4$} [l] at 1097 428
\pinlabel{$A^{a'b'}_1$} [t] at 1016 432
%
%
\pinlabel{$q_2$} [br] at 126 406
\pinlabel{$q_6$} [bl] at 645 406
\pinlabel{$q_1$} [r] at 38 317
\pinlabel{$p_1$} [l] at 738 317
\pinlabel{$A^{ab}_1$} at 113 260
\pinlabel{$q$} [l] at 208 260
\pinlabel{$q$} [r] at 308 260
\pinlabel{$A^{ab}_2$} at 524 260
\pinlabel{$q_5$} [l] at 734 204
\pinlabel{$q_3$} [tr] at 126 111
\pinlabel{$q_4$} [tl] at 648 111
\pinlabel{We obtain two new vertices} [l] at 583 19
\endlabellist
\centering
\includegraphics[width=12.6cm]{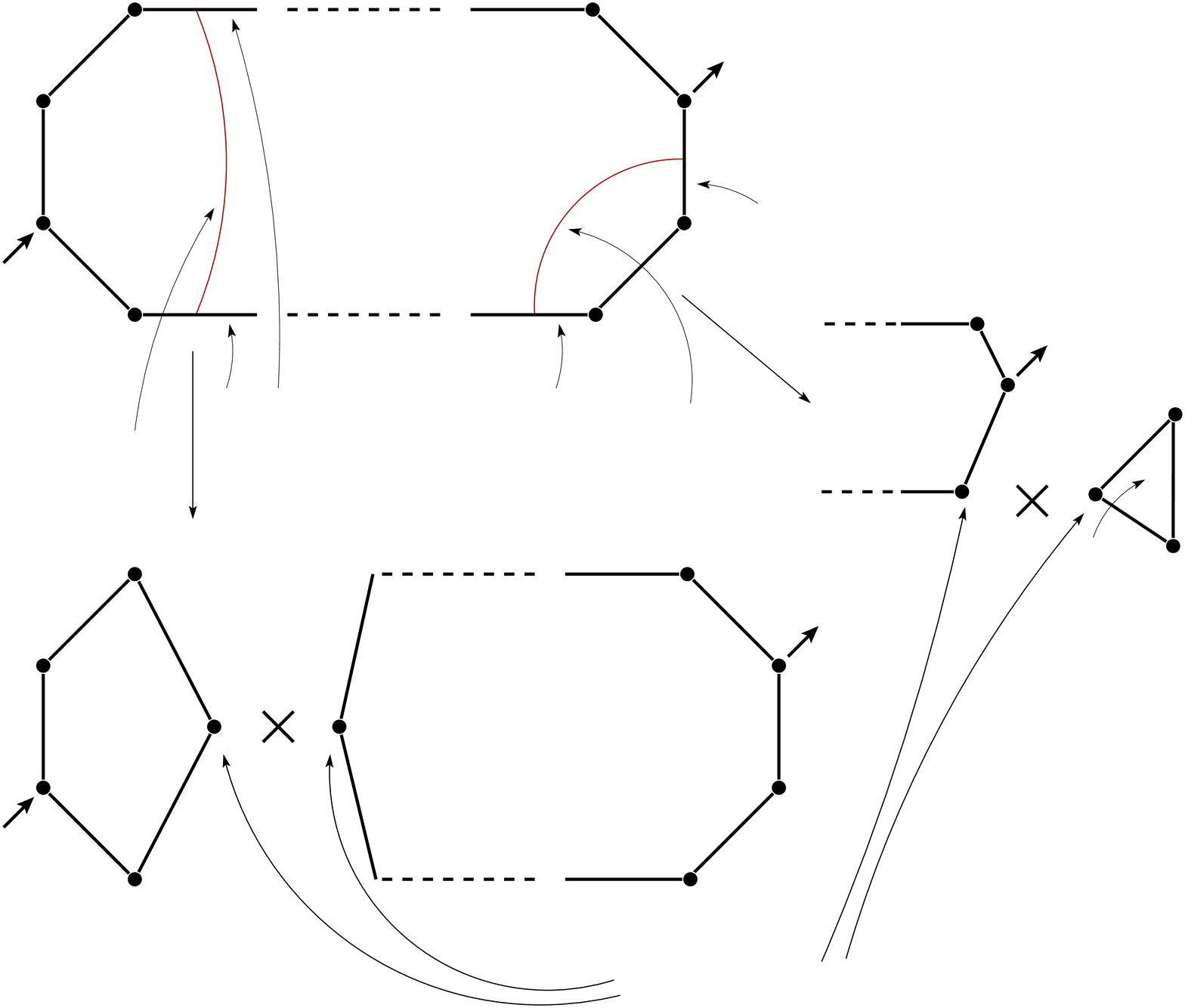}
\caption{Here we see the two cases that might occur in the boundary. 
The shrinking of $\gamma$ gives a product of moduli spaces which is
a $\boxtimes$-product of generators, whereas the shrinking of
$\mu$ gives a product of moduli spaces of whom one is a 
fully pointed pre-generator.}
\label{Fig:break}
\end{figure}
Furthermore, we know that $\phi_1(v_1)=\phi_2(v_2)$ is a point
in $\mbT_a\cap\mbT_b$. We denote this point by $\qfat$. 
The associated boundary component of $\pi(A)$ 
(see~\S\ref{sec:wpash} for a definition of $\pi$) is 
the Cartesian product of two moduli spaces we denote by $A^{ab;\qfat}_1$ and 
$A^{ab;\qfat}_2$. Hence, we have that
\[
 \parcoone\pi(\mA)
 =
 \bigsqcup_{
 \begin{array}{c}
  \scriptstyle{ a,b\in B'}\\ \scriptstyle{a\not=b}
 \end{array}
  }
 \bigsqcup_{q\in\mathbb{T}_a\cap\mathbb{T}_b}
 A^{ab;\qfat}_1
 \times
 A^{ab;\qfat}_2.
\]
Let $V_i$ be the set of vertices of $A^{ab;\qfat}_i$. We decorate the 
vertices in $V_i\backslash\{v_i\}$ with the data from the corresponding
vertices of $\mA$.
In this way, we define 
$\pointed_i$, $\fin_i$ and $\fout_i$ which provide decorations for all
vertices but $v_i$. So, it remains to give an algorithm which attaches a 
decoration to the vertex $v_i$:
We know that $\#(\mD(\fout(\mA)))=1$. Thus, either $A^{ab;\qfat}_1$ or 
$A^{ab;\qfat}_2$ has a flow-out vertex. Without loss of generality, we 
assume it is $A^{ab;\qfat}_2$. 
There are two cases we have to consider which are illustrated 
in Figure~\ref{Fig:break}.

{\bf Case 1.} In the first case $A^{ab;\qfat}_1$ has a vertex which
is a flow-in vertex. An example is illustrated in the lower part of 
Figure~\ref{Fig:break}. We obtain the pictured splitting by shrinking the curve 
$\gamma$ to a point. We {\it attach} to $v_1$ a flow-out arrow and
to $v_2$ a flow-in arrow: More precisely, define
\begin{equation}
 \{\{(a,b),\qfat\}\}\cup\fout_1
 \label{eq:datamod01}
\end{equation}
and --~by abuse of notation~-- denote this set by $\fout_1$. Furthermore, we
define
\begin{equation}
 \{(b,a)\}\cup\fin_2
 \label{eq:datamod02}
\end{equation}
and denote this set by $\fin_2$. We remove all points which were 
attached to the vertices of $A_i^{ab;\qfat}$ and then decorate it with 
the data $(\pointed_i,\fin_i,\fout_i)$. This provides us with 
decorated moduli spaces
we denote by $\mA^{ab;\qfat}_i$, $i=1,2$.
Observe that with this construction both $\mA^{ab;\qfat}_1$ and 
$\mA^{ab;\qfat}_2$ are generators.

{\bf Case 2.}
In the second case, $\mA^{ab;\qfat}_1$ carries no flow-in vertex. An 
illustration is given in the right part of Figure~\ref{Fig:break}. 
Here, we contracted the curve $\mu$. As in the first case, we define 
sets of data $(\pointed_i,\fin_i,\fout_i)$ which are given by 
extracting the data from
$\mA$ that correspond to the old vertices. Instead of the modifications 
given in (\ref{eq:datamod01}) and $(\ref{eq:datamod02})$ we perform the 
following modifications: Set
\begin{equation}
 \{\{(a,b),\qfat\}\}\cup\pointed_1
 \;\;\mbox{and}\;\;
 \{\{(b,a),\qfat\}\}\cup\pointed_2
 \label{eq:datamod03}
\end{equation}
and denote the corresponding set with $\pointed_i$, $i=1,2$. We then continue
as in the first case: We remove the points attached to the vertices of 
$A^{ab;\qfat}_i$, $i=1,2$, and then decorate it with the data $(\pointed_i,\fin_i,\fout_i)$ to obtain pre-generators 
$\mA^{ab;\qfat}_i$, $i=1,2$. Observe that in this case
$\mA^{ab;\qfat}_2$ is a generator but $\mA^{ab;\qfat}_1$ is not. It is
a fully pointed pre-generator and, therefore, an element of the coefficient
algebra $\mbf$. The Cartesian product $A^{ab;\qfat}_1\times A^{ab;\qfat}_2$ thus, 
should be thought of as 
realized by $\mA^{ab;\qfat}_1\cp\mA^{ab;\qfat}_2$.

We apply this algorithm to
all components of $\parcoone(\pi(\mA))$. 
A pair of edges is called {\bf nice} if for the associated boundary component 
the first case of the algorithm applies. 
For $\mA\in\mG$ we define
\[
 \parsh(\mA)
 =
 \Bigl(
 \!\!\!\!
 \bigboxplus{
 \begin{array}{c}
 \scriptstyle{(a,b)\,\mbox{\rm\tiny nice}}\\
 \scriptstyle{\qfat\in\mbT_a\cap\mbT_b}
 \end{array}}
 \!\!\!\!
 \mA^{ab;\qfat}_1\boxtimes\mA^{ab;\qfat}_2\Bigr)
 \boxplus
 \Bigl(
 \!\!\!\!\!\!
 \bigboxplus{
 \begin{array}{c}
 \scriptstyle{(a,b)\,\mbox{\rm\tiny not nice}}\\
 \scriptstyle{\qfat\in\mbT_a\cap\mbT_b}
 \end{array}}
 \!\!\!\!\!\!\!\!
 \mA^{ab;\qfat}_1\cp\mA^{ab;\qfat}_2
\Bigr)
 .
\]
Observe that if $\mA$ is a fully pointed pre-generator, the above 
arguments carry over verbatim to define an assignment $\pco(\mA)$ by
\[
 \pco(\mA)
 =
 \sum_{\begin{array}{c}
 \scriptstyle{(a,b)\,\mbox{\rm\tiny not nice}}\\
 \scriptstyle{\qfat\in\mbT_a\cap\mbT_b}
 \end{array}}
 \!\!\!\!\!\!\!\!\!\!
 \mA^{ab;\qfat}_1\cp\mA^{ab;\qfat}_2.
\]
So, we obtain a map
$
 \pco
 \co
 \mbf
 \lra
 \mbf,
$
by requiring that $\pco$ is $\boxplus$-linear and fulfills a 
Leibniz-rule, i.e.~for elements $\mA$, $\mB\in\mbf$ we have
\[
 \pco(\mA\cp\mB)
 =\pco(\mA)\cp\mB+\mA\cp\pco(\mB).
\]
We extend the definition of $\parsh$ from the generators
to the symbol algebra by requiring that for given $\boxtimes$-products
$\mA_1,\dots,\mA_k$ of generators and elements $f_1,\dots,f_k\in\mbf$ 
the following equalities hold:
\begin{equation}
 \begin{array}{rcl}
 \parsh(f_1\cp\mA_1)
 &=&
 \pco(f_1)\cp\mA_1
 \boxplus
 f_1\cp\parsh(\mA_1)
 \\
 \parsh\Bigl(\bigboxplus{i=1,\dots,k}f_i\cp\mA_i\Bigr)
 &=&
 \bigboxplus{i=1,\dots,k}\parsh(f_i\cp\mA_i)
 \\
\parsh(\mA_1\boxtimes\mA_2)
&=&
 \parsh(\mA_1)
 \boxtimes
 \mA_2
 \boxplus
 \mA_1
 \boxtimes
 \parsh(\mA_2).
 \end{array}
 \label{eq:propdiff1}
\end{equation}
This uniquely extends the map onto the symbol algebra.
\begin{theorem}\label{diffdef} The map
$
  \parsh\co\mShat\lra\mShat
$
is a differential, i.e.~$\parsh\circ\parsh=0$, and $\parsh(\mS)\subset\mS$.
\end{theorem}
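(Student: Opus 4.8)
The plan is to prove the two assertions separately: first that $\parsh$ preserves $\mS$, and then that $\parsh\circ\parsh=0$. For the first assertion, recall that $\mS=\Phi(\mQ)\cap\mT$ and that $\mShat$ is the $\mbf$-subsemialgebra of $(\mThat,\boxplus,\boxtimes)$ generated by the generators $\mG$. By the definition of $\parsh$ in \eqref{eq:propdiff1} it suffices to show that $\parsh(\mA)\in\mShat$ for every single generator $\mA\in\mG$, since $\parsh$ is then built up on all of $\mShat$ using $\pco$, $\boxplus$ and $\boxtimes$, operations under which $\mShat$ is closed. So the core of the first part is the analysis of $\parsh(\mA)$ done in \S\ref{thediff}: one checks, case by case along the algorithm, that each boundary term $\mA^{ab;\qfat}_1\boxtimes\mA^{ab;\qfat}_2$ coming from a nice pair is a $\boxtimes$-product of two \emph{generators} (this was explicitly observed in Case~1), hence an element of $\mShat$, and that each term $\mA^{ab;\qfat}_1\cp\mA^{ab;\qfat}_2$ coming from a non-nice pair is a product of a coefficient-algebra element (the fully pointed $\mA^{ab;\qfat}_1$, as observed in Case~2) with a generator $\mA^{ab;\qfat}_2$, again in $\mShat$. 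Summing these with $\boxplus$ keeps us in $\mShat$, and since $\parsh(\mA)$ is manifestly in $\mT$ (it is a genuine sum of products of pre-generators, not $\ohat$, unless all terms vanish), we get $\parsh(\mA)\in\mS$. The general case follows from the Leibniz rules in \eqref{eq:propdiff1}.

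For the identity $\parsh\circ\parsh=0$, I would again reduce to the case of a single generator $\mA\in\mG$: by \eqref{eq:propdiff1} and the fact that $\pco$ is a differential on $\mbf$ (which itself must be checked, again reducing to fully pointed pre-generators and the same codimension-$2$ boundary analysis), the Leibniz-type rules propagate $\parsh^2=0$ from generators to arbitrary $\boxtimes$-products and $\boxplus$-sums in the usual way that a derivation of square zero extends. So the heart of the matter is to show $\parsh(\parsh(\mA))=0$ for $\mA\in\mG$. The standard strategy is to interpret $\parsh^2(\mA)$ as a signed (here $\ztwo$, so unsigned) count of the codimension-$2$ boundary strata of $\pi(\mA)$, and to argue that each such stratum appears exactly twice in $\parsh^2(\mA)$ — once from each of the two orders in which the two breakings can occur — so that all contributions cancel in pairs over $\ztwo$. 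Concretely: a codimension-$2$ boundary component of $\pi(\mA)$ corresponds to a choice of two (unordered) pairs of edges $(a,b)$ and $(c,d)$ along which the polygon degenerates, producing a Cartesian product of three moduli spaces of Whitney polygons together with two new intersection points $\qfat\in\mbT_a\cap\mbT_b$ and $\qfat'\in\mbT_c\cap\mbT_d$. Applying $\parsh$ once picks one of the two degenerations and records the decorations via the Case~1/Case~2 algorithm; applying $\parsh$ again to the resulting pieces picks the remaining degeneration. One must verify that the decorations $(\pointed_i,\fin_i,\fout_i)$ assigned to the three-piece stratum are \emph{independent of the order} in which the two breakings were performed — this is essentially the content of the identity $\mKoutq(\mKinq(\mB))=\mKinq(\mKoutq(\mB))$ used already in \eqref{eq:chain01}, together with the associativity of $\boxtimes$ from Proposition~\ref{intro:times} and of $\cp$ on $\mbf$ — and that the "nice"/"not nice" bookkeeping for the flow-out vertex is also order-independent. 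Granting that, the two orders contribute the same element of $\mShat$, and $1+1=0$ in $\ztwo$ finishes it.

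A few technical points deserve care. First, one needs Gromov compactness to guarantee that the codimension-$1$ boundary of $\pi(\mA)$ really is a union of products of moduli spaces with no bubbling, which is cited from \cite{OsZa01} and the $n_z$-positivity of holomorphic spheres; for $\parsh^2$ one needs the analogous description of the codimension-$2$ boundary, i.e.~that the one-parameter families in $\parcoone(\pi(\mA))$ themselves degenerate only by further disk-breaking into products of three Whitney-polygon moduli spaces. Second, the Maslov-index bookkeeping from Definition~\ref{def:generators}(1) ($\mu\le 2$ for bigons, $\mu\le 1$ otherwise) must be respected throughout: when a polygon of index $\le 1$ breaks, the two pieces have indices summing to at most $1$ (one of them has index $0$, forcing it to be a constant/trivial piece or a bigon), so the pieces still satisfy the index bounds and the pre-generators produced are legitimate; in the iterated breaking for $\parsh^2$, an index-$1$ polygon (degenerating in a genuinely $1$-parameter way, hence a trigon contributing to $\parsh^2$ of a bigon-free generator, or more generally a configuration where the original index was exactly what is needed) again breaks into pieces of index $0$, and one must track which piece is the fully-pointed coefficient-algebra element versus the generator in each order. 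Third, one should note that whenever two summands in the double sum for $\parsh^2(\mA)$ correspond to the same codimension-$2$ stratum, their decorations match in the sense of Proposition~\ref{intro:plus}, so their $\boxplus$-sum is not $\ohat$ but a genuine sum, which then vanishes over $\ztwo$; and conversely terms that would produce $\ohat$ must be checked to do so symmetrically in both orders so as not to spoil the pairing. I expect the main obstacle to be exactly this last point — a careful, bureaucratic verification that the decoration-assignment algorithm of \S\ref{thediff} is symmetric under swapping the order of the two degenerations, including the placement of the single flow-out vertex and the resulting nice/not-nice dichotomy — rather than any deep geometric input, which is entirely supplied by the cited Gromov-compactness results.
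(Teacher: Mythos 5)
Your first half (closure of $\mS$ under $\parsh$, and the reduction of everything to single generators via the Leibniz rules \eqref{eq:propdiff1}) is fine and consistent with the paper, which dispatches that part in one line. The problem is the mechanism you propose for the key identity $\parsh\circ\parsh=0$. You frame it as the classical Floer-theoretic cancellation: identify $\parsh^2(\mA)$ with the codimension-$2$ boundary strata of $\pi(\mA)$, show each stratum arises from two orders of breaking, check the decoration algorithm is order-independent, and cancel in pairs over $\ztwo$. In this formalism that picture never gets off the ground, because Definition~\ref{def:generators}(1) caps the Maslov index ($\mu\le 2$ for bigons, $\mu\le 1$ otherwise), so every generator's underlying (unparametrized) moduli space has dimension at most $1$. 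The statement $\parsh^2(\mA)=0$ is nontrivial only when that space is $1$-dimensional, and then every factor $\mA^{ab;\qfat}_i$ (resp.\ $\mB^{a'b';\qfat}_i$) appearing in $\parsh(\mA)$ is $0$-dimensional. A $0$-dimensional moduli space has empty codimension-$1$ boundary, so when you expand $\parsh^2(\mA)$ by the Leibniz rule, each term contains a factor $\parsh(\mA^{ab;\qfat}_i)$ or $\pco(\mB^{a'b';\qfat}_i)$ that vanishes outright: there is no iterated breaking, there are no codimension-$2$ strata to pair up, and no cancellation is needed. This is exactly the paper's argument, and it also handles $(\pco)^2=0$ and the coefficient case $f\cp\mB$ (where the only use of characteristic $2$ is to kill the cross term $2\,\pco(f)\cp\parsh(\mB)$). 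You brush against the decisive fact in your ``Maslov-index bookkeeping'' remark — that the pieces of a breaking have index forcing them to be $0$-dimensional — but you then continue to treat the pieces as if they could degenerate again, and you locate the ``main obstacle'' in an order-independence check of the decoration algorithm that the actual proof never needs.

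Concretely, then, the gap is this: your proposal never explains how $\parsh^2(\mA)$, which in this algebra is computed purely by the Leibniz expansion, is to be identified with a sum over codimension-$2$ strata; and in the only cases that matter such strata do not exist, while the Leibniz expansion visibly vanishes term by term for dimension reasons. If you replace your pairing argument by the observation that all boundary factors are $0$-dimensional (and hence $\parsh$- and $\pco$-closed for trivial reasons), your reduction scheme goes through and recovers the paper's proof; as written, the central step rests on machinery that does not apply here.
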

\begin{proof} 
To see that it is a differential, 
first observe that this statement is nontrivial for those generators only 
whose underlying moduli space is $1$-dimensional. In this case, we 
have that
\begin{eqnarray*}
 \parsh^2(\mA)
 &=&
 \parsh\Bigl(\!\!\!
\bigboxplus{
 \begin{array}{c}
 \scriptstyle{a,b\in B'\mbox{\rm\tiny nice}}\\ 
 \scriptstyle{q\in\mbT_a\cap\mbT_b}
 \end{array}
 }
 \!\!\!\!\!\mA^{ab;\qfat}_1\boxtimes\mA^{ab;\qfat}_2
 \;\;\boxplus\!\!\!\!\!\!\!\!\!
 \bigboxplus{
 \begin{array}{c}
  \scriptstyle{a',b'\in B'\mbox{\rm\tiny not nice}}\\ \scriptstyle{q\in\mbT_{a'}\cap\mbT_{b'}}
 \end{array}
 }
 \!\!\!\!\!\!\!\!\!\!
 \mB^{a'b';\qfat}_1\cp\mB^{a'b';\qfat}_2
 \Bigr)\\
 &=&
 \bigboxplus{
\begin{array}{c}
 \scriptstyle{a,b\in B'\mbox{\rm\tiny nice}}\\ 
 \scriptstyle{q\in\mbT_a\cap\mbT_b}
 \end{array}
}
 \!\!\!\!\!
 \Bigl(
 \parsh(\mA^{ab;\qfat}_1)\boxtimes\mA^{ab;\qfat}_2
 \boxplus
 \mA^{ab;\qfat}_1\boxtimes\parsh(\mA^{ab;\qfat}_2)
 \Bigr)\\
 &\boxplus&
 \!\!\!\!\!
 \bigboxplus{
 \begin{array}{c}
 \scriptstyle{a',b'\in B'\mbox{\rm\tiny not nice}}\\ 
 \scriptstyle{q\in\mbT_{a'}\cap\mbT_{b'}}
 \end{array}
}
 \!\!\!\!\!\!\!\!\!\!\Bigl(
 \pco(\mB^{a'b';\qfat}_1)\cp\mB^{a'b';\qfat}_2
 \boxplus
 \mB^{a'b';\qfat}_1\cp\parsh(\mB^{a'b';\qfat}_2)
 \Bigr)
\\
 &=&
 0
\end{eqnarray*}
where the third equality holds since $\mA^{ab;\qfat}_1$, $\mA^{ab;\qfat}_2$, 
$\mB_1^{a'b';\qfat}$ and $\mB_2^{a'b';\qfat}$ are $0$-dimensional and, 
thus, $\parsh(\mA^{ab;\qfat}_1)$, $\parsh(\mA^{ab;\qfat}_2)$, 
$\pco(\mB_1^{a'b';\qfat})$ and $\parsh(\mB_2^{a'b';\qfat})$ vanish. Given 
generators $\mB_1,\dots,\mB_k$, we would like to check that
\[
 \parsh^2\Bigl(\bigboxtimes{i=1,\dots,k}\mB_i\Bigr)=0.
\]
This can be derived by induction on $k$ by applying the fact 
that $\parsh^2(\mB_i)=0$. Now let $\mB$ be a $\boxtimes$-product of 
generators and $f\in\mbf$, then
we have the following chain of equalities.
\begin{eqnarray*}
 \parsh^2(f\cp\mB)
 &=&
 \parsh(\pco(f)\cp\mB)
 \boxplus
 \parsh(f\cp\parsh(\mB))\\
 &=&
 \bigl(\pco\bigr)^2(f)\cp\mB
 \boxplus
 2\pco(f)\cp\parsh(\mB)
 \boxplus
 f\cp(\pco)^2(\mB)\\
 &=&
 \bigl(\pco\bigr)^2(f)\cp\mB\\
 &=&
 \Bigl(
 \sum_{(a,b),\qfat}
 \pco(\mB_1^{ab;\qfat}\cp\mB_2^{ab;\qfat})
 \Bigr)\cp\mB\\
 &=&
\Bigl(
 \sum_{(a,b),\qfat}
 (\pco(\mB_1^{ab;\qfat})\cp\mB_2^{ab;\qfat}
 +
 \mB_1^{ab;\qfat}\cp\pco(\mB_2^{ab;\qfat}))
 \Bigr)\cp\mB\\
 &=&
 0.
\end{eqnarray*}
The first and the second equality follow from the equations 
given in \eqref{eq:propdiff1}. The third equality is derived using both the 
commutative $\ztwo$-algebra structure of $\mbf$ and the vanishing of
$\parsh^2$ on products of generators. 
The fourth and the fifth equality are immediate from the definition 
of $\pco$ and the sixth equality rests on the fact that 
the $\mB_i^{ab;q}$, for $i=1,2$, are $0$-dimensional. 
Combining all results, we see that $\parsh^2=0$.

The second statement is a consequence of the definition of both the 
symbol algebra and the map $\parsh$.
\end{proof}
Now we have the elements ready to define our object of interest.
\begin{definition} We define the {\bf symbol homology} $\bh$ as 
the homology theory of the chain complex $(\mShat,\parsh)$.
\end{definition}
In fact, as the symbol algebra, the symbol homology carries the
structure of a semialgebra.
\begin{prop} The map $\pco\co\mbf\lra\mbf$ is a differential. Denote by
$\cohom$ the homology theory $H_*(\mbf,\pco)$, then $\bh$ is a
$\cohom$-semialgebra.
\end{prop}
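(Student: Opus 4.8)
The plan is to verify the two assertions in turn, beginning with the statement that $\pco\co\mbf\lra\mbf$ is a differential. Since $\pco$ is by definition a $\boxplus$-linear derivation with respect to the product $\cp$ on $\mbf$, it suffices to check that $\pco^2$ vanishes on the generators of $\mbf$, i.e.~on the fully pointed pre-generators $\mA$ whose underlying moduli space is $1$-dimensional (on lower-dimensional ones $\pco$ is already zero, and on higher-dimensional ones the statement is vacuous in the sense that the codimension-$1$ boundary is handled inductively). For such an $\mA$ the same computation as in the proof of Theorem~\ref{diffdef} applies: $\pco(\mA)$ is a sum over not-nice edge pairs $(a,b)$ and points $\qfat\in\mbT_a\cap\mbT_b$ of the products $\mA_1^{ab;\qfat}\cp\mA_2^{ab;\qfat}$, each factor of which is $0$-dimensional, so $\pco$ kills every term of $\pco(\mA)$ and hence $\pco^2(\mA)=0$. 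Once this is known on generators, the Leibniz rule and $\boxplus$-linearity propagate it to all of $\mbf$ exactly as in the final chain of equalities in the proof of Theorem~\ref{diffdef} (the cross term $2\,\pco(f)\cp\pco(g)$ vanishes over $\ztwo$, and in any case the middle term in a Leibniz expansion of $\pco^2$ cancels against itself). Thus $(\mbf,\pco)$ is a chain complex and $\cohom=H_*(\mbf,\pco)$ is defined.

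For the second assertion, I would first record that $\pco$ is compatible with the inclusion of $\mbf$ into the coefficients of the symbol algebra, in the sense that the restriction of $\parsh$ to scalars agrees with $\pco$ — this is built into the first line of \eqref{eq:propdiff1}. Consequently $\parsh$ is a morphism of $\mbf$-semialgebras twisted by $\pco$: the first equation of \eqref{eq:propdiff1} says $\parsh(f\cp\mA)=\pco(f)\cp\mA\boxplus f\cp\parsh(\mA)$, and the third says $\parsh$ is a $\boxtimes$-derivation. From Theorem~\ref{diffdef} we already know $\parsh^2=0$ on $\mShat$ and from the first part that $\pco^2=0$ on $\mbf$, so passing to homology is legitimate on both sides. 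It remains to check that the semialgebra operations $\boxplus$ and $\boxtimes$ on $\mShat$ descend to $\bh$ and that scalar multiplication descends to an action of $\cohom$. Additivity of $\parsh$ (the second line of \eqref{eq:propdiff1}) makes $\parsh$ a chain map with respect to $\boxplus$, so $\boxplus$ is well-defined on cycles modulo boundaries. The Leibniz identity $\parsh(\mA_1\boxtimes\mA_2)=\parsh(\mA_1)\boxtimes\mA_2\boxplus\mA_1\boxtimes\parsh(\mA_2)$ shows that $\boxtimes$ sends a pair of cycles to a cycle and that changing either factor by a boundary changes the product by a boundary, so $\boxtimes$ is well-defined on $\bh$. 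Similarly $\parsh(f\cp\mA)=\pco(f)\cp\mA\boxplus f\cp\parsh(\mA)$ shows that the scalar action of a $\pco$-cycle $f$ on a $\parsh$-cycle $\mA$ is a $\parsh$-cycle, and that replacing $f$ by $f\boxplus\pco(g)$ or $\mA$ by $\mA\boxplus\parsh(\mB)$ alters $f\cp\mA$ by a $\parsh$-boundary; hence the $\mbf$-semimodule structure descends to a $\cohom$-semimodule structure on $\bh$.

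Finally one must confirm that the semialgebra axioms of Definition~\ref{def:semialgebra} are preserved under passage to homology. Associativity and $\boxplus$-bilinearity of $\boxtimes$, and the identity $(f\cp v)\times(g\cp w)=fg\cp(v\times w)$, all hold at the chain level by Proposition~\ref{intro:plus} and Proposition~\ref{intro:times}; since these are identities between elements that are already well-defined after taking homology, they hold a fortiori in $\bh$. The compatibility of the $\cohom$-action with the product, namely that a $\pco$-cycle acts the same way whether multiplied in before or after $\boxtimes$, follows from the corresponding chain-level identity in $(\mThat,\boxplus,\boxtimes)$. Assembling these observations shows $\bh$ is a $\cohom$-semialgebra. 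The only point requiring genuine care — and the one I would treat most carefully — is the interplay between the twisting of $\parsh$ by $\pco$ on scalars and the relations $\mA=0$, $\mA=1$ defining $\mbf$: one needs that $\pco$ respects these relations, i.e.~that $\pco$ of a pre-generator with empty $\pi$ is again a sum of terms with empty $\pi$ (so that $\pco$ is well-defined on the quotient $\mbf$ in the first place), which follows because shrinking an arc in a polygon whose moduli space is empty produces factors whose associated moduli spaces are also empty.
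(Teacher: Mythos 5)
Your argument is correct and follows essentially the same route as the paper: $\pco^2=0$ is reduced to the computations already carried out in the proof of Theorem~\ref{diffdef}, and the descent of the scalar action to $\cohom\times\bh\lra\bh$ is verified by exactly the two computations (replacing $f$ by $f\boxplus\pco(g)$ and $\mA$ by $\mA\boxplus\parsh(\mB)$) via \eqref{eq:propdiff1} that the paper writes out, with the descent of $\boxplus$ and $\boxtimes$ handled by the standard algebraic-topology argument the paper merely cites. Your extra remarks (well-definedness of $\pco$ on the quotient $\mbf$, preservation of the semialgebra axioms in homology) are sensible supplements but do not change the approach.
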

\begin{proof} The vanishing of $\pco\circ\pco$ follows from the considerations
provided in the proof of Theorem~\ref{diffdef}.
Given elements $f\in\mbf$ with
$\pco(f)=0$ and $[\mA]\in\bh$, we would like to see that $[f\cp\mA]$ just
depends on the homology classes of $f$ and $\mA$: Suppose we are 
given an element $g\in\mbf$, then
\[
 \begin{array}{rcl}
 (f+\pco(g))\cp\mA
 &=&
 f\cp\mA\boxplus\pco(g)\cp\mA\\
 &=&
 f\cp\mA\boxplus\pco(g)\cp\mA\boxplus g\cp\parsh(\mA)
 =
 f\cp\mA
 \boxplus
 \parsh(g\cp\mA)
\end{array}
\]
where the second equality holds since $\mA$ is closed and where
the last equality is given by (\ref{eq:propdiff1}). For an element
$\mB$ we have
\[
 \begin{array}{rcl}
 f\cp(\mA\boxplus\parsh(\mB))
 &=&
 f\cp\mA
 \boxplus
 f\cp\parsh(\mB)\\
 &=&
 f\cp\mA
 \boxplus
 f\cp\parsh(\mB)
 \boxplus
 \pco(f)\cp\mB
 =
 f\cp\mA
 \boxplus
 \parsh(f\cp\mB),
\end{array}
\]
where the second equality holds since $\pco(f)=0$ and where the
last equality is given by applying $(\ref{eq:propdiff1})$. 
This shows that the product $\bolddot$ descends to a map
\[
 \bolddot
 \co
 \cohom
 \times
 \bh
 \lra
 \bh.
\]
To show that $\boxplus$ and $\boxtimes$ descend to maps on
the symbol homology we can apply
arguments standard in algebraic topology: this proof follows
the same lines as the proof which shows that wedging on 
differential forms induces a product on cohomology. Therefore, 
we omit these arguments.
\end{proof}

\subsection{Symbol Homology and Floer Homology}\label{sec:shafh}
Suppose we have fixed a set $B$ of attaching circles. For simplicity
we will work with $\ztwo$-coefficients. To every
element $a=(\afat,\bfat)\in B^{\times 2}\backslash\Delta_2$ we can
associate the $\ztwo$-vector space $\cfhat_a=\cfhat(\afat,\bfat)$. 
By building tensor products of these $\cfhat_a$'s we
can construct a wide variety of $\ztwo$-vector spaces. Denote 
by $V$ and $W$ two of them. We define $\map$ as the 
union of all morphisms between $V$ and $W$, where $V$ and $W$ 
vary among all vector spaces we can define as described above.
We set
\[
 \maphat=\map\cup\{\ohat\}
\]
and equip it with a semialgebra structure as follows: We define 
a map
\[
  \pu\co \maphat\times\maphat\lra\maphat
\]
by sending a pair $f$, $g\in\map$ to $f+g$ if the source and
destination of $f$ and $g$ agree and we send the pair to 
$\ohat$, otherwise. If the source and destination agree, we say that
{\bf the data of $f$ and $g$ match}. In addition, we 
define $\ohat\pu f=\ohat$ for every $f\in\maphat$. Furthermore, we 
define a map
\[
  \cu\co\maphat\times\maphat\lra\maphat
\]
in the following way: Suppose we are given a pair $f,g\in\map$. 
If the source of $f$ agrees with the destination
of $g$, then we define $f\cu g=f\circ g$. If $f$ and $g$ are
of the form
\begin{eqnarray*}
 f
 \co
 C_1\otimes\dots\otimes C_k
 &\lra&
 D_i\\
 g\co D_1\otimes\dots\otimes D_i\otimes\dots\otimes D_l
 &\lra& E
\end{eqnarray*}
with $C_1,\dots,C_k,D_1,\dots,D_l,E$ all Heegaard Floer
chain complexes, then we define
\[
 g\cu f
 =
 g\circ
 (
 \mbox{\rm id}_{D_1\otimes\dots\otimes D_{i-1}}
 \otimes
 f
 \otimes
 \mbox{\rm id}_{D_{i+1}\otimes\dots\otimes D_{l}}
 ).
\]
If none of the above cases apply, we set $f\cu g=\ohat$. If $f\cu g$ does not
equal $\ohat$ we say that {\bf the data of $f$ and $g$ match}. Finally, for 
every $f\in\maphat$ we define $\ohat\cu f=f\cu\ohat=\ohat$.
\begin{lem} 
The triple $(\maphat,\pu,\cu)$ is a $\ztwo$-semialgebra.
\end{lem}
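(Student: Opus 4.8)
The plan is to verify the axioms from Definition~\ref{def:semialgebra} directly, treating $\maphat$ as a $\ztwo$-semimodule under $\pu$ first and then checking compatibility of $\cu$ with $\pu$. Since the scalar ring here is just $\ztwo$, the scalar-multiplication axioms are essentially trivial: $1_{\ztwo}\cp f=f$, $0\cp f$ must be the additive identity, and the distributivity laws over $\ztwo$ reduce to statements about adding $f$ to itself or not. The only genuine content in the semimodule part is that $(\maphat,\pu)$ is a commutative semigroup, i.e.~that $\pu$ is associative and commutative. Commutativity is immediate from the definition since ``source and destination agree'' is a symmetric condition and ordinary addition of maps is commutative. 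For associativity one checks that $(f\pu g)\pu h$ equals $\ohat$ precisely when not all three of $f,g,h$ share the same source and destination, which is a symmetric condition on the triple, so it agrees with when $f\pu(g\pu h)$ equals $\ohat$; and when all data match, both sides reduce to the honest sum $f+g+h$ in the $\ztwo$-vector space of maps between the common source and destination. The role of $\ohat$ as an absorbing element is built into the definition and is compatible with all of this.

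Next I would check that $\cu$ is associative and distributes over $\pu$. Associativity of $\cu$ splits into the cases dictated by the definition: either we are composing honestly ($f\cu g=f\circ g$ when the source of $f$ is the destination of $g$) or we are ``inserting'' $f$ into one tensor factor of the domain of $g$. When all three maps compose honestly, associativity of $\cu$ is just associativity of $\circ$. When some of the maps are inserted into tensor factors, one uses the standard bifunctoriality of $\otimes$ — namely that $\mathrm{id}\otimes f\otimes\mathrm{id}$ applied before or after $\mathrm{id}\otimes g\otimes\mathrm{id}$ in a disjoint tensor slot commutes appropriately, and that composing in the same slot is composition in that slot — to see that the two bracketings give the same map. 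In every case where the data fail to match at some stage, both $(f\cu g)\cu h$ and $f\cu(g\cu h)$ collapse to $\ohat$, so the equality holds vacuously. This case analysis is the main bookkeeping obstacle: there are several configurations of ``honest composition'' versus ``slot insertion'' to run through, but each is routine once the right functoriality statement for $\otimes$ is invoked.

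For the distributive laws $\mP\cu(f\pu g)=(\mP\cu f)\pu(\mP\cu g)$ and the symmetric version, I would argue as follows: if $f\pu g=\ohat$ (the data of $f$ and $g$ do not match), then the sources and destinations of $f$ and $g$ differ, hence the data of $\mP$ with $f$ and the data of $\mP$ with $g$ cannot both match in a way that yields compatible outputs, so the right-hand side is $\ohat$ as well. If instead $f\pu g=f+g$ is an honest sum, then $f$ and $g$ have a common source $C$ and common destination $D$, and $\mP\cu(f+g)$ is computed either by honest precomposition or by insertion into a fixed tensor slot; since both of these operations are $\ztwo$-linear in the map being precomposed or inserted, they distribute over the sum $f+g$, giving exactly $(\mP\cu f)+(\mP\cu g)=(\mP\cu f)\pu(\mP\cu g)$. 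The reverse distributivity is identical, using linearity of postcomposition (respectively of the ambient map into which one inserts) in the inserted argument. Finally, the scalar-compatibility condition $(a\cp f)\cu(b\cp g)=ab\cp(f\cu g)$ for $a,b\in\ztwo$ is immediate: over $\ztwo$ this just says $\ohat$ behaves correctly and that honest composition of maps is bilinear mod $2$, which it is. Assembling these verifications shows $(\maphat,\pu,\cu)$ satisfies all of Definition~\ref{def:semialgebra}, hence is a $\ztwo$-semialgebra.
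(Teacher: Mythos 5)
Your proposal is correct and follows essentially the same route as the paper's proof: a direct verification of the semialgebra axioms of Definition~\ref{def:semialgebra}, with the same case analysis (honest composition $f\circ g$ versus insertion into a tensor slot via $\mbox{\rm id}\otimes f\otimes\mbox{\rm id}$), the same appeal to bilinearity of composition and sums of maps, and the same observation that mismatched data force both sides of each identity to collapse to $\ohat$. If anything you are slightly more explicit than the paper on the associativity of $\cu$ and on the $\ztwo$-scalar axioms, which the paper dispatches with ``a similar discussion,'' but the argument is the same.
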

The proof rests on the fact that the composition of maps is 
bilinear with respect to taking sums of maps. Furthermore, taking 
sums is an associative operation.
\begin{proof} Given three maps $f$, $g$, $h\in\map$, the triple sum 
$(f\pu g)\pu h$ is not sent to $\ohat$ if any only if all the maps 
have matching data (in the sense defined above). The same holds 
for $f\pu(g\pu h)$. Thus, we get the equality
\[
 (f\pu g)\pu h= f\pu(g\pu h).
\]
The commutativity of $\pu$ follows immediately from its definition 
and the fact that taking sums of maps is commutative. A similar 
discussion shows that $\cu$ is associative. For $f, g, h\in\maphat$ 
the composition $f\cu(g\pu h)$ equals $\ohat$ unless the destination 
and source of both $g$ and $h$ match and the destination of $g$ 
equals the source of $f$. If all data are matching, then we either 
have 
\[
  f\cu(g\pu h)=f\circ(g+h)=f\circ g+f\circ h=f\cu g\pu f\cu h,
\]
or
\begin{eqnarray*}
 f\cu(g\pu h)
 &=&
 f\circ
 (\mbox{\rm id}_A\otimes(g+h)\otimes\mbox{\rm id}_B)\\
 &=&
 f\circ(\mbox{\rm id}_A\otimes g\otimes\mbox{\rm id}_B)
 +
 f\circ(\mbox{\rm id}_A\otimes h\otimes\mbox{\rm id}_B)\\
 &=& 
 f\cu g\pu f\cu h
\end{eqnarray*}
for suitable tensor products $A$ and $B$ of Heegaard Floer chain 
complexes. If the data do not match, then $f\cu(g\pu h)$ 
equals $\ohat$. However, in this case the same is true 
for $f\cu g\pu f\cu h$.
\end{proof}
Define a map 
\begin{equation}
\evf\co\mbf\lra\ztwo
\label{eq:ctdef}
\end{equation}
by sending fully pointed pre-generators $\mA$ to 
$\evf(\mA)=\#(\pi(\mA))$ (see~\S\ref{sec:wpash} for a definition of $\pi$)
and extending $\evf$ as a $\ztwo$-algebra morphism to $\mbf$. This 
{\it counting operation} has a natural counterpart on the symbol algebra
which we explain in the following: Given a generator $\mA=\genA$, we 
introduce the following notation. By
$\cfhat_{\fin}$ we define the tensor product of Heegaard Floer chain
modules determined by the boundary conditions at the vertices in $\fin$.
Similarly, we define $\cfhat_{\fout}$. First of all , we define $\ev(\ohat)$
to be the element $\ohat\in\maphat$. Second of all, suppose we are given a 
generator $\mA=\genA$ with $\qfat_1,\dots,\qfat_k$
the slot points in $\pointed$ and $\rfat$ the slot point in $\fout$,
denote by $\qfu$ the element $(\qfat_1,\dots,\qfat_k)$.
A map
\[
  \ev(\mA)
  \co
  \cfhat_{\!\fin}
  \lra
  \cfhat_{\!\fout}
\]
is defined by sending a generator 
$\xfu=\xfat_1\otimes\dots\otimes\xfat_f$ of $\cfhat_{\fin}$ to
\[
 \ev(\mA)(\xfu)
 =
 \#A(\xfu,\qfu,\rfat)
 \cdot
 \rfat
\]
and extending as a linear map of $\ztwo$-vector spaces. Now, for 
$i=1,\dots,k$, let $\mA_i$ be a $\boxtimes$-product
of generators and denote by $f_i$ an element of $\mbf$. Then, we
require the following equalities
\begin{equation}
 \begin{array}{lcr}
 \ev(f_1\cp\mA_1)
  &=&
  \evf(f_1)\cdot\ev(\mA_1)
 \\
  \ev\Bigl(\bigboxplus{i=1,\dots,k}f_i\cp\mA_i\Bigr)
  &=&
  \sum_{i=1,\dots,k}\evf(f_i)\cdot\ev(\mA_i)
 \\
  \ev(\mA_1\boxtimes\mA_2)
  &=&
  \ev(\mA_2)\cu\ev(\mA_1)
 \end{array}
 \label{eq:mor}
\end{equation}
These definitions provide a map
\[
 \ev\co(\mShat,\boxplus,\boxtimes)\lra(\maphat,\pu,\cu)
\]
which is uniquely determined by the above.

\begin{prop}\label{intro:ev} 
The map $\ev$ vanishes on boundaries, i.e.~$\ev\circ\parsh=0$. Hence, 
it descends to map
$
  \ev_*
  \co
  \bh
  \lra
  \maphat
$
which is a $(\cohom,\ztwo)$-morphism of semialgebras.
\end{prop}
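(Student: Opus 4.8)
The plan is to verify the two claims separately: that $\ev$ vanishes on $\parsh$-boundaries, and that the induced map $\ev_*$ is a $(\cohom,\ztwo)$-morphism of semialgebras. For the first claim, by the linearity properties in \eqref{eq:mor} and \eqref{eq:propdiff1}, it suffices to check $\ev(\parsh(\mA))=0$ for a single generator $\mA\in\mG$, since $\parsh$ and $\ev$ both respect $\boxplus$ and the coefficient action, and the $\boxtimes$-Leibniz rule reduces the case of $\boxtimes$-products to the single-generator case (using $\ev(\mA_1\boxtimes\mA_2)=\ev(\mA_2)\cu\ev(\mA_1)$ together with $\ev\circ\parsh=0$ on each factor). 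So fix $\mA$ whose underlying moduli space is one-dimensional (the statement is trivial otherwise, since then $\pi(\mA)$ has empty codimension-$1$ boundary or $\parsh(\mA)$ is built from empty spaces).

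First I would unwind $\ev(\parsh(\mA))$ using the definition of $\parsh(\mA)$ as a $\boxplus$-sum over nice and not-nice pairs of edges. Applying \eqref{eq:mor}, each nice term $\mA_1^{ab;\qfat}\boxtimes\mA_2^{ab;\qfat}$ contributes $\ev(\mA_2^{ab;\qfat})\cu\ev(\mA_1^{ab;\qfat})$, and each not-nice term $\mA_1^{ab;\qfat}\cp\mA_2^{ab;\qfat}$ contributes $\evf(\mA_1^{ab;\qfat})\cdot\ev(\mA_2^{ab;\qfat})$, where $\evf(\mA_1^{ab;\qfat})=\#(\pi(\mA_1^{ab;\qfat}))$. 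Evaluating on a generator $\xfu$ of $\cfhat_{\fin}$ and reading off the coefficient of a fixed output generator, the total count becomes a sum, over all vertices $s$ of $\mA$ and all $\qfat\in\mbT_a\cap\mbT_b$ realizing the break at $s$, of the number of points in the corresponding two-component piece of $\parcoone(\pi(\mA))$ with the prescribed pointings at all the other vertices. By construction (the decoration algorithm in \S\ref{thediff} simply records the boundary conditions coming from the original vertices and inserts the new break-point), this sum is precisely $\#\bigl(\parcoone(\pi(\mA'))\bigr)$, where $\mA'$ is $\mA$ with the input generator $\xfu$, output generator, and all pointings fixed as boundary conditions --- i.e.\ the number of endpoints of a compact $1$-manifold. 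Since a compact $1$-manifold has an even number of boundary points, this count vanishes mod $2$, which is exactly $\ev(\parsh(\mA))=0$.

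The second claim is essentially formal once the first is in hand. That $\ev$ descends to $\ev_*\co\bh\to\maphat$ is immediate from $\ev\circ\parsh=0$ together with the observation --- already used implicitly --- that $\ev$ sends $\parsh$-boundaries to $\ohat$ and is $\boxplus$-linear, so it is well-defined on homology classes; one also needs that $\ev$ factors through the coefficient-homology action, i.e.\ that $\evf$ kills $\pco$-boundaries, which follows from the same compact-$1$-manifold argument applied to fully pointed pre-generators (the count $\evf(\pco(\mA))$ is again the number of endpoints of the compact $1$-manifold $\pi(\mA)$ with all pointings fixed). Then the three identities in \eqref{eq:mor} say verbatim that $\ev$ intertwines $(\cp,\boxplus,\boxtimes)$ with $(\cdot,\pu,\cu)$ and $\evf$ with scalar multiplication, and passing to homology turns $\evf$ into the induced map $\cohom\to\ztwo$; hence $\ev_*$ is a $(\cohom,\ztwo)$-semialgebra morphism in the sense of the preceding definition.

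The main obstacle is the bookkeeping in the first paragraph of the argument above: one must check that the decoration algorithm in \S\ref{thediff} is set up so that attaching the input/output generators and pointings of $\mA$ as boundary conditions and then taking the codimension-$1$ boundary gives exactly the union, over nice and not-nice breaks, of the point sets counted by $\ev$ of the respective $\boxtimes$- and $\cp$-terms --- with the correct multiplicities and with no term double-counted or omitted. Concretely, the subtlety is matching the two cases of the algorithm (flow-in vertex on $\mA_1^{ab;\qfat}$ versus not) against the two types of terms in $\parsh(\mA)$, and confirming that in the not-nice case the factor $\evf(\mA_1^{ab;\qfat})=\#(\pi(\mA_1^{ab;\qfat}))$ is precisely the local contribution of that boundary stratum. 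Once this identification is made, the evenness of the boundary count of a compact $1$-manifold closes the proof.
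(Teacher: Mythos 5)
Your proposal is correct and follows essentially the same route as the paper: the vanishing of $\ev\circ\parsh$ comes down to the mod-$2$ count of boundary points of the compact $1$-manifold $\pi(\mA)$ (the paper states exactly this parity argument, leaving the bookkeeping you spell out implicit), and the morphism property of $\ev_*$ is the formal consequence of $\ev$ being a $(\mbf,\ztwo)$-morphism of semialgebras at the chain level. Your added reductions (to a single generator via \eqref{eq:mor} and the Leibniz rule, and the matching of nice/not-nice breaks with the two kinds of terms) are just an expansion of the paper's one-line argument, not a different method.
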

\begin{proof} 
The vanishing of $\ev\circ\parsh$ is a consequence of the fact that 
$1$-dimensional manifolds have an even number of boundary 
components. Hence, $\ev$ induces a map $\ev_*$ on the symbol
homology. The induced map $\ev_*$ is a $(\cohom,\ztwo)$-morphism of
semialgebras since $\ev$ is a $(\mbf,\ztwo)$-morphism of semialgebras.
\end{proof}
Observe that by construction, every map $f$ between Heegaard 
Floer chain complexes that is defined by counting holomorphic polygons 
with suitable boundary conditions admits a preferred element $\fraks_f$ 
in the symbol algebra such that $\ev(\fraks_f)=f$. We call $\fraks_f$
the {\bf canonical symbol} of $f$. Sometimes, by abuse of notation, we
will also refer to $[\fraks]\in\bh$ as the canonical symbol of $f$.

\section{Filtered Symbol Homology}\label{sec:fsh}
Suppose we are given a set $B$ of attaching circles. 
In \S\ref{sec:tshp} our focus lay on moduli spaces of Whitney 
polygons $\phi$ with $n_z(\phi)=0$. Fixing an additional 
point $w$ of the Heegaard surface $\Sigma$ that lies in
the complement of the attaching circles given in $B$, we 
may look at polygons $\phi$ as before, with the additional 
condition $n_w(\phi)=0$ imposed. We call the associated 
moduli spaces {\bf $w$-filtered} to distinguish them from 
the moduli spaces used in \S\ref{sec:tshp}. As a path of almost 
complex structures we choose one which is $w$-respectful 
(see \S\ref{prelim:01:1}). We use the $w$-filtered 
spaces to define the notions of pre-generators, generators 
and fully pointed pre-generators the same way we did in 
Definition~\ref{def:generators} and denote by $\mG^w$ the
set of generators. Then, following the 
construction procedure from \S\ref{sec:wpash} and \S\ref{thediff},
we define the {\bf $w$-filtered symbol algebra} 
$\mShat^w$. The 
associated homology theory is denoted by $\bh^w$ 
and called {\bf $w$-filtered symbol homology}. To fix notation, 
we introduce the following notational conventions: We denote 
by $\mbf^w$ the coefficient algebra and denote by $\mT^w$ the 
non-commutative polynomial $\mbf^w$-algebra generated by the set 
of pre-generators which are not fully pointed.  And finally, 
write $\cohom^w$ for $H_*(\mbf^w,\partial_{\mbf^w})$, the 
coefficient algebra of the $w$-filtered symbol homology. 
\subsection{The Filtering Morphism}\label{sec:tfm}
Given a moduli space $A$ of polygons, we define the $w$-filtered 
space as
\[
 A^w
 =
 \bigl\{
 \phi\in A\,|\, n_w(\phi)=0
 \bigr\}.
\]
With this in place, we construct a map by assigning to a 
pre-generator $\mA=A_{(\pointed,\fin,\fout)}$ the
element $\mff(\mA)=(A^w)_{(\pointed,\fin,\fout)}$ and
extending to a map
\[
  \mff
  \co
  (\mT,+,\times)
  \lra
  (\mT^w,+,\times)
\]
as a $(\mbf,\mbf^w)$-morphism of algebras, where
$
 \mff(\mC\cp\mA)=(C^w)_{(\pointed,\fin,\fout)}\cp\mA
$
for a fully pointed pre-generator $\mC=C_{(\pointed,\fin,\fout)}$ and
pre-generator $\mA$.
It is easy to see that $\mff$ sends generators to generators. 
More precisely, $\mff$ restricts to a bijection between $\mG$ 
and $\mG^w$. 
Hence, with the convention $\mff(\ohat)=\ohat$, the map $\mff$ 
restricts to a map
\[
 \mff
 \co
 \mShat
 \lra
 \mShat^w
\]
between the symbol algebras. 
\begin{theorem}\label{thm:filtmap} The map $\mff\co\mShat\lra\mShat^w$ is 
a $(\mbf,\mbf^w)$-morphism of semialgebras. Furthermore,
if $\mJ_s$ is $w$-respectful, then the map $\mff$ 
is a chain map and, thus, induces a map
\[
  \mff_*
  \co
  \bh
  \lra
  \bh^w.
\]
which is a $(\cohom,\cohom^w)$-morphism of semialgebras.
\end{theorem}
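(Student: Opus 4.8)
The plan is to verify each of the three claimed properties of $\mff$ in turn, moving from the purely algebraic (semialgebra morphism) to the geometric (chain map), and then deducing the induced-map statement formally. First I would check that $\mff$ is a $(\mbf,\mbf^w)$-morphism of semialgebras. On the level of the underlying algebras $(\mT,+,\times)$ and $(\mT^w,+,\times)$ this is true by construction, so the only thing to check is compatibility with the modified operations $\boxplus$ and $\boxtimes$. For $\boxplus$: the operation $\boxplus$ differs from $+$ only through the matching condition on inward/outward multiplicities at the vertices, and $\mff$ does not change the decorations $(\pointed,\fin,\fout)$ of a pre-generator — it only replaces the moduli space $A$ by the subspace $A^w$. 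Hence $\mff$ preserves inward and outward multiplicities vertex-by-vertex, so the data of two summands match before applying $\mff$ if and only if they match after, and $\mff(a\boxplus b)=\mff(a)\boxplus\mff(b)$. For $\boxtimes$: the operators $\mKoutq$, $\mKinq$, $\mD$ act purely on the decoration data, which $\mff$ leaves untouched, and the condition $\mD(\fout(\mA))\subset\finprime(\mB)$ likewise depends only on decorations; the only subtlety is that $\mff$ must send $(A\times B)^w$ to $A^w\times B^w$, i.e.~$w$-filtering is compatible with the splitting/gluing of moduli spaces underlying $\times$ — which holds because $n_w$ is additive under gluing of polygons. One also has to check $\mff$ interacts correctly with the relations cutting $\mT$ down: a pre-generator $\mA$ with $\pi(\mA)=\emptyset$ need not have $\pi(\mff(\mA))=(\pi(\mA))^w=\emptyset$ automatically, but since $\emptyset^w=\emptyset$ this direction is fine, and for the coefficient-algebra relations $\mA=1$ one uses that $w$-filtering a space that was already a point (bigon of Maslov index $0$) or nonempty behaves correctly. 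So this step is routine bookkeeping once one notes that $\mff$ touches only the space, never the decorations.

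Next, the heart of the matter: $\mff$ is a chain map, i.e.~$\mff\circ\parsh=\parsh^w\circ\mff$, provided $\mJ_s$ is $w$-respectful. For a generator $\mA$, $\parsh(\mA)$ is assembled from the codimension-$1$ boundary $\parcoone(\pi(\mA))$, which by Gromov compactness decomposes into products $A_1^{ab;\qfat}\times A_2^{ab;\qfat}$. The key geometric input is exactly that $n_w$ is additive under breaking: if a sequence in $\pi(\mA)$ degenerates to $(\phi_1,\phi_2)$, then $n_w(\phi)=n_w(\phi_1)+n_w(\phi_2)$, and $w$-respectfulness guarantees $V_w$ is a complex submanifold so that $n_w\geq 0$ on every holomorphic piece, with no negative contributions that could cancel. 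Therefore $\phi\in\pi(\mA)^w$ (i.e.~$n_w(\phi)=0$) breaks precisely into pairs $(\phi_1,\phi_2)$ with $n_w(\phi_1)=n_w(\phi_2)=0$, so the codimension-$1$ boundary of $\pi(\mA)^w=\pi(\mff(\mA))$ is obtained from that of $\pi(\mA)$ by simply $w$-filtering each factor. Since the decoration-attaching algorithm producing $\mA_i^{ab;\qfat}$ from $\mA$ depends only on the combinatorics of the break (which pair of edges is pinched, whether the break is nice) and not on the almost complex structure, and since $\mff$ commutes with it, we get
\[
 \mff(\parsh\mA)
 =
 \bigboxplus{(a,b)\,\mbox{\rm\tiny nice},\;\qfat}
 \mff(\mA^{ab;\qfat}_1)\boxtimes\mff(\mA^{ab;\qfat}_2)
 \;\boxplus\;
 \bigboxplus{(a,b)\,\mbox{\rm\tiny not nice},\;\qfat}
 \mff(\mA^{ab;\qfat}_1)\cp\mff(\mA^{ab;\qfat}_2)
 =
 \parsh^w(\mff\mA).
\]
The analogous statement on the coefficient algebra, $\mff\circ\pco=\partial_{\mbf^w}\circ\mff$, follows the same way for fully pointed pre-generators. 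One then extends from generators and coefficients to all of $\mShat$ using that both $\parsh$, $\parsh^w$ and $\mff$ obey the Leibniz/linearity rules in \eqref{eq:propdiff1}, which is a short induction on the length of a $\boxtimes$-product.

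Finally, given that $\mff$ is a chain map and a semialgebra morphism, it descends to homology: $\mff_*\co\bh\lra\bh^w$ is well-defined, and it is a $(\cohom,\cohom^w)$-morphism of semialgebras because $\mff$ is a $(\mbf,\mbf^w)$-morphism of semialgebras intertwining the differentials $\pco$ and $\partial_{\mbf^w}$ — exactly the same formal argument (wedge-product-on-cohomology style) invoked earlier in the paper when passing the semialgebra structure from $\mShat$ to $\bh$. The main obstacle, and the only place where genuine geometry rather than bookkeeping enters, is the chain-map step: one must be careful that $w$-respectfulness is precisely what rules out broken configurations with a component of negative $n_w$ (which would otherwise spoil the additivity argument and make $\parcoone(\pi(\mA)^w)$ larger than the $w$-filtering of $\parcoone(\pi(\mA))$), and one should also note in passing that sphere bubbling is excluded just as in \S\ref{thediff} because holomorphic spheres in the symmetric product have strictly positive $n_z$.
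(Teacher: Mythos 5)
Your proposal is correct and its core coincides with the paper's proof: the chain-map step rests on exactly the same geometric input ($w$-respectfulness makes $V_w$ a complex submanifold, so intersections are positive, and $n_w$ is a homotopy invariant additive under splicing, giving $\bigl(\parcoone(A)\bigr)^w=\parcoone(A^w)$ and hence $\mff\circ\parsh=\parsh^w\circ\mff$), and the passage to homology is the same formal argument. The only divergence is in the semialgebra-morphism step: you verify compatibility with $\boxplus$ and $\boxtimes$ directly by checking that $\mff$ leaves decorations, multiplicities and the operators $\mKoutq$, $\mKinq$ untouched, whereas the paper gets this more slickly by noting that $\mff$ is by construction a morphism on $(\mT,+,\times)$ and transporting the property through the surjection $\Phi\co\mQ\lra\mShat$ via a commutative square; your direct bookkeeping is a legitimate substitute and makes explicit what the paper leaves implicit (in particular the compatibility of $w$-filtering with the relations defining $\mbf$ and $\mT$), at the cost of being longer.
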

We both call $\mff$ and the induced map $\mff_*$ 
the {\bf filtering morphism}. This morphism is the main object 
of interest in this section.
\begin{proof} Since $\mJ_s$ respects the point $w$, we know 
that $V_w=\{w\}\times\symgmo$ is a complex submanifold 
of $\symg$. Thus, every intersection of a $\mJ_s$-holomorphic
polygon with $V_w$ is positive. Furthermore, we know that the 
intersection number $n_w$ is a homotopical invariant and 
behaves additive under splicing. Thus, the $w$-filtered boundary 
of a moduli space $A$ equals the boundary of the 
$w$-filtered moduli space $A^w$, i.e.~
\[
 \bigl(\parcoone(A)\bigr)^w
 =
 \parcoone(A^w).
\]
Interpreted in the language of the symbol algebra, this translates 
into $\mff\circ\parsh=\parsh^w\circ\mff$. Thus, $\mff$ is chain.
To see that $\mff$ is a morphism of semialgebras, recall that there 
is a semialgebra morphism
\[
  \Phi
  \co
  (\mThat,+,\times)
  \lra
  (\mThat,\boxplus,\boxtimes)
\]
(see (\ref{eq:gensymbal})) with the following property: The image
of $\left.\Phi\right|_\mQ$, i.e.~the image of $\Phi$ restricted to 
the subalgebra $\mQ$ (see~Definition~\ref{def:symalg}), is 
the symbol algebra. It is 
easy to see from its definition that 
$\left.\Phi\right|_{\mQ\backslash\Phi^{-1}(\ohat)}$ is 
injective. Hence, the following square is commutative.
\[
  \xymatrix{
  \mQ\ar[d]_{\Phi}\ar[r]^{\mff}
  & 
  \mQ^w\ar[d]^{\Phi} \\
  \mS(\mG)=\Phi(\mQ)\backslash\ohat\ar[r]^{\mff}
  & 
  \Phi(\mQ^w)\backslash\ohat=\mS(\mG^w)
  }
\]
The map $\Phi$ is a morphism of semialgebras and it surjects 
onto the symbol algebra. Hence, the map $\mff$ restricted
to the symbol algebra is a morphism of semialgebras. Since
$\mff$ is a $(\mbf,\mbf^w)$-morphism of semialgebras, its
induced map in homology is a $(\cohom,\cohom^w)$-morphism of
semialgebras.
\end{proof}

\subsection{Property P of Morphisms/Symbols}\label{sec:ppomas}
Denote by $\bh(\mG)\{X\}$ the non-commutative polynomial algebra
in one variable, defined using the sum $\boxplus$ and
the product $\boxtimes$. Define $\mbp(\mG)$ as the polynomials
of degree at least one.
\begin{definition}\label{def:propp} For an element $\fraks\in\bh(\mG)$ we 
say that {\bf $\fraks$ has property $\boldsymbol{P}$} if $P$ is a polynomial
in $\mbp(\mG)$ with root $\fraks$. Furthermore, we say that 
$\ev(\fraks)$ has property $P$, if $\fraks$ has property $P$.
\end{definition}
Observe that the filtering morphism $\mff_*$ extends to a morphism
\[
 \bh(\mG)\{X\}\lra\bh(\mG^w)\{X\},\,p\lmt p_\mff
\]
by defining $\mff_*(X)=X$. As such, this map restricts to a morphism
$
 \mbp(\mG)
 \lra
 \mbp(\mG^w)
$.
\begin{theorem}\label{thm:propp} Suppose we are given a 
symbol $\fraks\in\bh$. If $\fraks$ has property $P$, then 
$\mff_*(\fraks)\in\bh^w$ has property $P_\mff$.
\end{theorem}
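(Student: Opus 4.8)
The plan is to reduce Theorem~\ref{thm:propp} directly to the chain-map property of the filtering morphism established in Theorem~\ref{thm:filtmap}. First I would unwind the definitions: by Definition~\ref{def:propp}, saying that $\fraks\in\bh$ has property $P$ means there is a polynomial $P\in\mbp(\mG)$ --~written say $P(X)=\bigboxplus{j} c_j\cp (X^{\boxtimes n_{j,1}}\boxtimes\fraks_{j,1}\boxtimes X^{\boxtimes n_{j,2}}\boxtimes\cdots)$ with coefficients $c_j\in\bh(\mG)$ built from $\boxplus$ and $\boxtimes$~-- such that $P(\fraks)=0$ in $\bh$. What has to be shown is that $P_\mff(\mff_*(\fraks))=0$ in $\bh^w$, where $P_\mff$ is the image of $P$ under the extension $\bh(\mG)\{X\}\to\bh(\mG^w)\{X\}$ that fixes $X$ and applies $\mff_*$ to all coefficients.

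The key step is to observe that evaluation of a polynomial at $X=\fraks$ is expressed purely through the semialgebra operations $\boxplus$, $\boxtimes$ and the $\cohom$-action $\bolddot$, together with the substitution $X\mapsto\fraks$. By Theorem~\ref{thm:filtmap}, $\mff_*\co\bh\lra\bh^w$ is a $(\cohom,\cohom^w)$-morphism of semialgebras; that is exactly the statement that $\mff_*$ commutes with $\boxplus$, with $\boxtimes$, and intertwines the $\cohom$- and $\cohom^w$-module structures via the induced ring map $\cohom\to\cohom^w$. Since $P_\mff$ is by construction obtained from $P$ by applying $\mff_*$ to every coefficient and leaving the formal variable $X$ untouched, and since $\mff_*(X\text{-substituted at }\fraks)=X\text{-substituted at }\mff_*(\fraks)$ holds termwise, I would prove by a straightforward induction on the structure (degree and number of $\boxplus$/$\boxtimes$ nodes) of the polynomial expression that
\[
 \mff_*\bigl(P(\fraks)\bigr)=P_\mff\bigl(\mff_*(\fraks)\bigr).
\]
The base cases are monomials: $\mff_*(c_j\cp\fraks^{\boxtimes n})=\mff_*(c_j)\cp\mff_*(\fraks)^{\boxtimes n}$ by the semialgebra-morphism property, and the inductive step handles $\boxplus$ and $\boxtimes$ of subexpressions using the same property.

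Once that identity is in hand, the conclusion is immediate: $P(\fraks)=0$ gives $\mff_*(P(\fraks))=\mff_*(0)=0$, hence $P_\mff(\mff_*(\fraks))=0$, which is precisely the assertion that $\mff_*(\fraks)$ has property $P_\mff$. The main obstacle I anticipate is purely bookkeeping rather than conceptual: one must be careful that the formal polynomial ring $\bh(\mG)\{X\}$ is noncommutative and that the coefficients $c_j$ can themselves be nontrivial composite elements of $\bh(\mG)$ (not merely of $\cohom$), so the induction has to be set up on the word length of the polynomial expression in the generators, and one must check that $\mff_*$ being only a \emph{semialgebra} morphism (no additive inverses) does not obstruct the argument~--~it does not, because evaluating and then mapping versus mapping and then evaluating agree monomial-by-monomial without ever needing subtraction. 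A secondary point worth a sentence is that $\mff_*$ is well defined on homology by Theorem~\ref{thm:filtmap}, so the whole argument takes place legitimately at the level of $\bh$ and $\bh^w$ rather than the chain level.
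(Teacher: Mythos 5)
Your proposal is correct and follows essentially the same route as the paper: the paper's proof is the one-line computation $P_\mff(\mff_*(\fraks))=\mff_*(P(\fraks))=\mff_*(0)=0$, justified by the fact that $\mff_*$ is a morphism of semialgebras (Theorem~\ref{thm:filtmap}). Your induction on the structure of the polynomial expression merely spells out the identity $\mff_*(P(\fraks))=P_\mff(\mff_*(\fraks))$ that the paper treats as immediate from the morphism property.
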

\begin{proof}
This is an 
immediate application of the fact that
$\mff_*$ is a morphism of semialgebras. We have that
$P_\mff(\mff_*(\fraks))=\mff_*(P(\fraks))=\mff_*(0)=0$.
\end{proof}
A map
$
 f\co
 \bigotimes_{i}
 \cfhat_i
 \lra
 \cfhat'
$
between suitable Heegaard Floer chain modules which is defined by counting
elements of moduli spaces of Whitney polygons corresponds to a homology 
class $\fraks_f\in\bh$ via the morphism $\ev_*$, 
i.e.~$\ev_*(\fraks_f)=f$. A property of $f$ can be encoded into a polynomial
expression $P$ with coefficients in $\map$ such that $P(f)=0$ if and only if 
$f$ fulfills the property. Given such a polynomial $P$ is there a method to
relate $P$ to a polynomial $P'$ such that $P'(f^w)=0$? Morphisms between symbol
homologies give us a method to do that as Theorem~\ref{thm:propp} 
indicates (cf.~\S\ref{parone} and cf.~\S\ref{sec:eai}).

\section{Examples and Ideas}\label{sec:eai}
In this section we communicate some of the 
ideas behind the construction we gave in the previous sections. 
The ideas behind the theory are very simple and all the 
operations we provide are based upon simple algorithms. Although
the ideas are simple, writing these concepts down formally turns
out to be difficult and extensive. We think that reading this section 
will help the reader to familiarize with the techniques. In fact,
we introduce some notation for indicating the decorations of
a moduli space which makes the whole construction intuitive.
In the following sections we will use the notation introduced here. 
Furthermore, we present two explicit calculations
of symbol homologies in easy situations (see~Example~\ref{ex:calc1} 
and Example~\ref{ex:calc2}) and two examples which
should help indicating in what way the symbol homologies can be
of benefit (see~Example~\ref{example01} and Example~\ref{example02}): 
One of the benefits of this theory is that it unifies the Floer chain 
level and the moduli space level into one object (see~the discussion 
below, cf.~\S\ref{sec:rhftfsh} and \S\ref{sec:shafh}). A consequence
of this unification is that it provides a systematic and immediate way 
to transfer properties between different Floer theoretic settings 
without difficulty.
Proofs of properties which need the moduli space machinery now do not 
need to be repeated in different settings but can now just be accepted 
by pointing to our results. In Example~\ref{example02} at 
the end of this section we give an easy demonstration how this
transfer is done when explicitly worked out. This technique
will be applied in an invariance proof of cobordism maps
between knot Floer homologies (see~Theorem~\ref{cobmapinvar}) and 
for a surgery exact triangle in knot Floer 
homologies (see~Theorem~\ref{thm:set}).\\

Suppose we are given a set $B=\{\afat,\bfat,\gfat\}$ of attaching
circles in a surface $\Sigma$. In the previous sections we decorate
moduli spaces with auxiliary data. This is done by the following
rules which we exemplify on the moduli space 
$\mM^0_{(\afat,\gfat,\bfat)}$ of holomorphic Whitney triangles $\phi$ 
with Maslov index $0$, $n_z(\phi)=0$ and boundary conditions given by
$\afat$, $\gfat$ and $\bfat$. The boundary conditions specify conditions
on the edges of the triangle $\phi$. Additionally, we may impose conditions
on the vertices of the triangle, i.e.~specify points they have to be 
mapped to. In the literature this is indicated as follows
$\mM^0_{(\afat,\gfat,\bfat)}(\xfat,\yfat,\qfat)$ where 
$\xfat\in\talpha\cap\tbeta$, $\yfat\in\tbeta\cap\tgamma$ and 
$\qfat\in\talpha\cap\tgamma$. Instead of attaching just points to the
vertices there are now three different types of decorations. We either
attach $\overdown{\cdot}$, $\overup{\xfat}$ or $\overdot{\xfat}$ 
(see~Definition~\ref{def:generators}). The first
makes the corresponding vertex a flow-in vertex, the second makes it a
flow-out vertex and the third a pointed vertex 
(see~Definition~\ref{def:generators}). The 
point $\xfat$ is called the slot-point of the vertex. The additional 
information given by the decorations allow us to interpret moduli spaces 
in various ways: For instance, the space 
$
 \mA
 =
 \mM^0_{(\afat,\gfat,\bfat)}
 (\,\overdown{\cdot}\,,\overdot{\yfat},\overup{\qfat})
$
can be interpreted as a map
\[
 \ev(\mA)\co\cfhat(\Sigma,\afat,\bfat)\lra\cfhat(\Sigma,\afat,\gfat)
\]
such that 
$
 \ev(\mA)(\xfat)
 =
 \#(\mM^0_{(\afat,\gfat,\bfat)}(\xfat,\yfat,\qfat))\cdot\qfat
$ (see~Proposition~\ref{intro:ev}). Put in words, the flow-in 
vertices determine the source of 
the map, the flow-out vertices the destination and pointings 
serve as boundary conditions. In this way, all maps in Heegaard Floer 
theory (which are defined by counting elements of moduli spaces 
of Whitney polygons) can be represented as decorated moduli 
spaces if we find a suitable way to express sums of maps in terms of 
moduli spaces and if we find a suitable way to express 
compositions of maps in terms of moduli spaces.

To this end, suppose we are given an additional decorated space
$
 \mB=\mM^0_{(\afat,\gfat,\bfat)}
 (\,\overdown{\cdot}\,,\overdot{\rfat},\overup{\qfat})
$, the map $\ev(\mA)+\ev(\mB)$ sends an element $\xfat$ to
\[
 \begin{array}{rcl}
 (\ev(\mA)+\ev(\mB))(\xfat)
 &=&
 \ev(\mA)(\xfat)
 +
 \ev(\mB)(\xfat)\\
 &=&
 (\#\mM^0_{(\afat,\gfat,\bfat)}
 (\xfat,\yfat,\qfat)
 +
 \#\mM^0_{(\afat,\gfat,\bfat)}
 (\xfat,\rfat,\qfat))
 \cdot\qfat\\
 &=&
 \#(\mM^0_{(\afat,\gfat,\bfat)}
 (\xfat,\yfat,\qfat)\sqcup\mM^0_{(\afat,\gfat,\bfat)}
 (\xfat,\rfat,\qfat))\cdot\qfat
\end{array} 
\]
So, it makes sense to define $\mA\boxplus\mB$ as the
disjoint union of the spaces (see~\S\ref{sec:tshp}). However, 
there are some difficulties that arise as sources and destinations
that are specified by the flow-in vertices of decorated 
moduli spaces might not be matching. This produces some issues which
require some consideration.

In a similar vein, we proceed
to get a candidate for a product. Suppose we are given 
the decorated space 
$\mC=\mMhat^1_{(\afat,\bfat)}(\,\overdown{\cdot}\,,\overup{\xfat})$. 
The composition $\ev(\mC)\circ\ev(\mA)$ sends an element $\zfat$ to 
\[
 \ev(\mC)\circ\ev(\mA)(\zfat)=
 \#(\mMhat^1_{(\afat,\bfat)}(\zfat,\xfat))
 \cdot
 \#(\mM^0_{(\afat,\gfat,\bfat)}(\xfat,\yfat,\qfat)
 \cdot
 \qfat
.\] 
Since
\[
 \#(\mMhat^1_{(\afat,\bfat)}(\zfat,\xfat))
 \cdot
 \#(\mM^0_{(\afat,\gfat,\bfat)}(\xfat,\yfat,\qfat))
 =
 \#(
 \mMhat^1_{(\afat,\bfat)}(\zfat,\xfat)
 \times
 \mM^0_{(\afat,\gfat,\bfat)}(\xfat,\yfat,\qfat)),
\]
the Cartesian product is the right candidate for the product 
$\mC\boxtimes\mA$ (see~\S\ref{sec:wpash}). However, the 
product has to change decorations 
suitably (cf.~Definition~\ref{def:operators}). We define
\begin{eqnarray*}
 \mC\boxtimes\mA
 &=&
 \mMhat^1_{(\afat,\bfat)}(\,\overdown{\cdot}\,,\overup{\xfat})
 \boxtimes
 \mM^0_{(\afat,\gfat,\bfat)}(\,\overdown{\cdot}\,,\overdot{\yfat},\overup{\qfat})\\
 &=&
 \mMhat^1_{(\afat,\bfat)}(\,\overdown{\cdot}\,,\overdot{\xfat})
 \times
 \mM^0_{(\afat,\gfat,\bfat)}(\overdot{\xfat},\overdot{\yfat},\overup{\qfat}).
\end{eqnarray*}
Put in words, the algorithm goes as follows: Observe that the flow-out 
vertex of the decorated space $\mC$ appears as a flow-in vertex 
of $\mA$ (in the sense of Definition~\ref{def:operators} (d)). We transform 
the flow-out vertex of $\mC$ to a pointed 
vertex (by changing its decoration) while keeping the 
slot-point --~$\xfat$ in our case~-- unchanged. 
Furthermore,  we transform the corresponding flow-in vertex of 
$\mA$ to a pointed vertex and add $\xfat$ as its 
slot-point (see~\S\ref{sec:wpash}). In this way, the 
product uniquely specifies the source and 
destination in such a way that it coincides 
with the source and destination of $\ev(\mA)\circ\ev(\mC)$. Again, 
there are technical difficulties arising in this construction 
which require some consideration. 

To be able to see 
properties of the Floer homologies in our setting, we have to 
realize boundaries of moduli spaces as suitable $\boxplus$-sums 
of $\boxtimes$-products of elements in the symbol 
algebra (see~\S\ref{thediff}). We tried to indicate 
this in Figure~\ref{Fig:boundop}: Let us consider the 
decorated moduli space 
$
 \mM^1_{(\afat,\gfat,\bfat)}
 (\,\overdown{\cdot}\,,\overdot{\qfat},\overup{\yfat})
$.
We first compute the codimension-$1$ boundary of the space
$
\mM^1_{(\afat,\gfat,\bfat)}
(\,\cdot\,\qfat,\yfat)
$.
This is indicated in part (a) of Figure~\ref{Fig:boundop}.
\begin{figure}[t!]
\labellist\small\hair 2pt
\definecolor{mymagenta}{HTML}{FF00FF}
\pinlabel{{\color{mymagenta}$\xfat$}} [b] at 530 601
\pinlabel{{\color{mymagenta}$\xfat$}} [l] at 999 606
\pinlabel{{\color{mymagenta}$\xfat$}} [t] at 315 518
\pinlabel{{\color{mymagenta}$\xfat$}} [t] at 384 513
\pinlabel{$\yfat$} [t] at 483 513
\pinlabel{$\qfat$} [t] at 569 513
\pinlabel{{\color{mymagenta}$\xfat$}} [t] at 780 518
\pinlabel{{\color{mymagenta}$\xfat$}} [l] at 859 500
\pinlabel{$\yfat$} [l] at 943 500
\pinlabel{$\qfat$} [l] at 1043 500
\pinlabel{{\color{mymagenta}$\xfat$}} [t] at 1235 500
\pinlabel{{\color{mymagenta}$\xfat$}} [l] at 1323 500
\pinlabel{$\yfat$} [l] at 1414 500
\pinlabel{$\qfat$} [l] at 1511 500
\pinlabel{$\yfat$} [l] at 23 300
\pinlabel{$\qfat$} [l] at 121 300
\pinlabel{{\color{mymagenta}$\rfat$}} [t] at 269 319
\pinlabel{{\color{mymagenta}$\rfat$}} [t] at 342 314
\pinlabel{$\qfat$} [t] at 427 314
\pinlabel{$\yfat$} [t] at 526 314
\pinlabel{{\color{mymagenta}$\rfat$}} [b] at 530 401
\pinlabel{{\color{mymagenta}$\rfat$}} [t] at 739 319
\pinlabel{{\color{mymagenta}$\rfat$}} [l] at 815 300
\pinlabel{$\qfat$} [l] at 901 300
\pinlabel{$\yfat$} [l] at 998 300
\pinlabel{{\color{mymagenta}$\rfat$}} [b] at 1000 408
\pinlabel{{\color{mymagenta}$\rfat$}} [t] at 1192 303
\pinlabel{{\color{mymagenta}$\rfat$}} [l] at 1269 300
\pinlabel{$\qfat$} [l] at 1369 300
\pinlabel{$\yfat$} [l] at 1465 300
\pinlabel{{\tiny Compute Boundary}} [t] at 172 263
\pinlabel{{\tiny Add decorations}} [t] at 626 263
\pinlabel{{\color{mymagenta} $\tfat$}} [t] at 313 121
\pinlabel{$\qfat$} [b] at 384 203
\pinlabel{$\yfat$} [t] at 484 116
\pinlabel{{\color{mymagenta} $\tfat$}} [t] at 384 116
\pinlabel{{\color{mymagenta} $\tfat$}} [t] at 569 116
\pinlabel{part (a)} [t] at 432 70
\pinlabel{{\color{mymagenta} $\tfat$}} [t] at 781 121
\pinlabel{{\color{mymagenta} $\tfat$}} [l] at 860 100
\pinlabel{$\yfat$} [l] at 944 100
\pinlabel{{\color{mymagenta} $\tfat$}} [l] at 1042 100
\pinlabel{$\qfat$} [l] at  859 210
\pinlabel{part (b)} [t] at 909 70
\pinlabel{{\color{mymagenta} $\tfat$}} [t] at 1235 105
\pinlabel{{\color{mymagenta} $\tfat$}} [l] at 1326 100
\pinlabel{$\yfat$} [l] at 1411 100
\pinlabel{{\color{mymagenta} $\tfat$}} [l] at 1511 100
\pinlabel{$\qfat$} [l] at 1326 205
\pinlabel{part (c)} [t] at 1377 70
\endlabellist
\centering
\includegraphics[width=15cm]{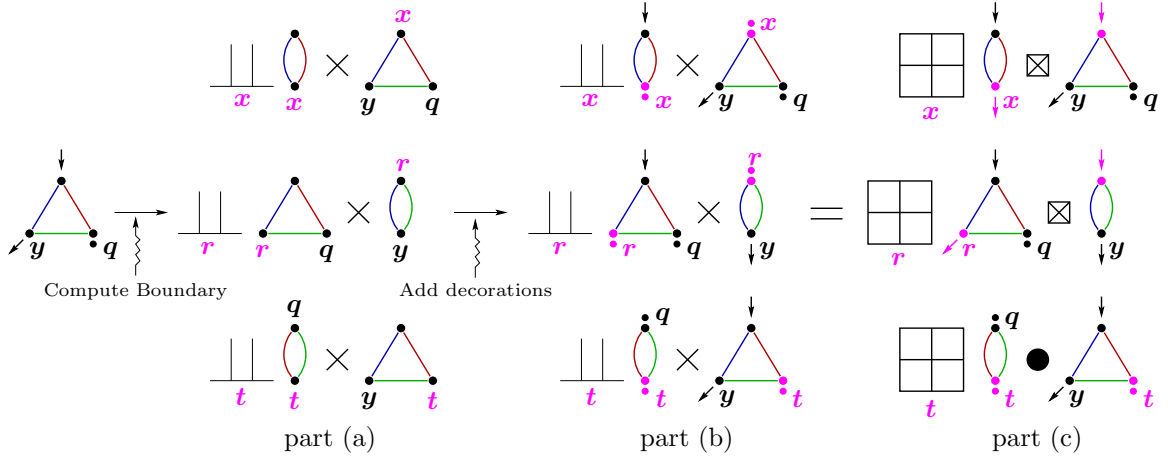}
\caption{How to define the boundary $\parsh$: We first compute the boundary
and then decorate the vertices with data. The black decorations are 
inherited from the old space and the pink decorations are attached 
due to the principle that {\it we need a unique and well-defined 
flowing direction}.}
\label{Fig:boundop}
\end{figure}
In part (b) of Figure~\ref{Fig:boundop} we decorate the spaces with data.
Observe that each moduli space in the boundary admits two types of vertices. The 
(old) vertices which coincide with vertices of the space 
$
\mM^1_{(\afat,\gfat,\bfat)}
(\,\cdot\,\qfat,\yfat)
$
and vertices which are new, i.e.~generated in the boundary. The old
vertices are indicated as black dots in Figure~\ref{Fig:boundop}
and the new vertices are colored pink. We decorate the black vertices with
the same data as the corresponding vertices in 
$
 \mM^1_{(\afat,\gfat,\bfat)}
 (\,\overdown{\cdot}\,,\overdot{\qfat},\overup{\yfat})
$
and the new vertices according to the principle of having
a {\it unique and well-defined flowing direction}. We point the reader to
\S\ref{thediff} for the description of the algorithm. In 
this process, the following phenomenon appears which complicates the 
construction slightly: The bottom of part (b) shows a Cartesian product
\[
 \mMhat^1_{(\afat,\bfat)}
 (\overdot{\xfat},\overdot{\tfat})
 \times
 \mM^0_{(\afat,\gfat,\bfat)}
 (\,\overdown{\cdot}\,,\overdot{\tfat},\overup{\zfat}) 
\]
The space $\mMhat^1_{(\afat,\bfat)}
 (\overdot{\xfat},\overdot{\tfat})$ cannot be interpreted as 
a map using the algorithm presented above for $\ev$. So, the 
given Cartesian product cannot be a $\boxtimes$-product since 
the latter should correspond to taking compositions of 
maps (cf.~Proposition~\ref{intro:ev}). So, we have to interpret 
$
 \mMhat^1_{(\afat,\bfat)}
 (\overdot{\xfat},\overdot{\tfat})
$ 
as a coefficient in our algebraic setting. For this reason, 
fully pointed pre-generators generate the coefficient 
algebra (see~\S\ref{sec:wpash}). This complication in 
the definition repairs this issue and is also supplemented by the 
following observation: The algorithm underlying the definition 
of $\ev$ can be applied not only for generators. For instance,
\[
 \begin{array}{rll}
\ev(\mMhat^1_{(\afat,\bfat)}
 (\overdot{\xfat},\overdot{\yfat}))&
 =\#\bigl(\mMhat^1_{(\afat,\bfat)}(\xfat,\yfat)\bigr)&
 \in\ztwo\\ 
 \ev(\mMhat^1_{(\afat,\bfat)}
 (\,\overdown{\cdot}\,,\overdot{\yfat}))&
 =\bigl(
 \xfat\lmt
 \#\mMhat^1_{(\afat,\bfat)}(\xfat,\yfat)\bigr)& 
 \in \mbox{\rm Hom}(\cfhat(\afat,\bfat),\ztwo)\\
 \ev(\mMhat^1_{(\afat,\bfat)}
 (\,\overdot{\xfat}\,,\overup{\yfat}))&
 =
 \#\bigl(\mMhat^1_{(\afat,\bfat)}(\xfat,\yfat)\bigr)
 \cdot\yfat
 &\in\cfhat(\afat,\bfat).
\end{array}
\]
Hence, fully pointed pre-generators naturally correspond to elements
in the coefficient ring of Heegaard Floer homology. 

Furthermore, this observation indicates that our setting provides a unified 
language for all elements of Floer homologies. As we 
will see in the next section, 
this will lead us to some kind of {\it reformulation} of Heegaard 
Floer homology in terms of our setting.

\begin{example}\label{ex:calc1} We would like to calculate the symbol homology
in a simple situation. Suppose we are given a Heegaard diagram
$\mH=(T^2,\afat,\bfat)$ where $\afat=\{\mu\}$ consists of a meridian 
$\mu$ and $\bfat=\{\lambda\}$ of a longitude $\lambda$ such that
$\#(\mu,\lambda)=1$. Denote by $\xfat$ the intersection point of $\mu$
and $\lambda$. Now set $B=\{\afat,\bfat\}$. Since $\mI_B=\{[(\afat,\bfat)]\}$
is a one-point set we drop it from the notation of all moduli spaces. 
The fully pointed pre-generator $\mM^0(\overdot{\xfat},\overdot{\xfat})$
is the only existing non-trivial fully pointed element. However, by definition
$\mM^0(\overdot{\xfat},\overdot{\xfat})=1$ inside $\mbf$. Hence, $\mbf\cong\ztwo$
with $\pco=0$ such that $\cohom\cong\ztwo$. 

There is only one non-trivial generator, namely 
$
 X=
 \mM^0(\,\overdown{\cdot}\,,\overup{\xfat}).
$
Now, observe that 
$
 X^{\boxtimes 2}
 =
 \mM^0(\,\overdown{\cdot}\,,\overdot{\xfat})
 \times
 \mM^0(\overdot{\xfat},\overup{\xfat})
$ and that
\[
 \begin{array}{lclcl}
 X^{\boxtimes 3}
 &=&
 \bigl(
 \mM^0(\,\overdown{\cdot}\,,\overdot{\xfat})
 \times
 \mM^0(\overdot{\xfat},\overup{\xfat})\bigr)
 \boxtimes
 \mM^0(\,\overdown{\cdot}\,,\overup{\xfat})
 &\hspace{-0.3cm}=&
 \hspace{-0.3cm}\mM^0(\,\overdown{\cdot}\,,\overdot{\xfat})
 \times
 \bigl(\mM^0(\overdot{\xfat},\overup{\xfat})
 \boxtimes
 \mM^0(\,\overdown{\cdot}\,,\overup{\xfat})\bigr)\\
 &=&
 \mM^0(\,\overdown{\cdot}\,,\overdot{\xfat})
 \times
 \bigl(\mM^0(\overdot{\xfat},\overdot{\xfat})
 \cp
 \mM^0(\overdot{\xfat},\overup{\xfat})\bigr)
 &\hspace{-0.3cm}=&
 \hspace{-0.3cm}\mM^0(\overdot{\xfat},\overdot{\xfat})
 \cp
 \bigr(\mM^0(\,\overdown{\cdot}\,,\overdot{\xfat})
 \times
 \mM^0(\overdot{\xfat},\overup{\xfat})\bigl)
 \\
 &=&
 X^{\boxtimes 2}.&&
 \end{array}
\]
This is the only existing relation and, hence, $\mS$ is isomorphic to 
the associative $\ztwo$-algebra $\mR\subset M_3(\ztwo)$ which is 
generated by the matrix
\[
 A
 =
 \left(
 \begin{matrix}
 1 & 1 & 0 \\
 0 & 0 & 1 \\
 0 & 0 & 0 
 \end{matrix}
 \right).
\]
The associated symbol homology can be written as 
$\bh=\mR\cup\{[\ohat]\}$ where $\bh\backslash[\ohat]$ 
is isomorphic to $\mR$ as a $\ztwo$-algebra.
\end{example}
\begin{example}\label{ex:calc2} Suppose we are given a Heegaard diagram 
$\mH=(T^2,\afat,\bfat)$ where both $\afat=\{\mu_1\}$ and 
$\bfat=\{\mu_2\}$ consist of a meridian such that $\mu_1$ 
and $\mu_2$ intersect in a canceling pair of intersection points 
$\xfat_1$, $\xfat_2$ where $\xfat_1$ denotes the one with higher 
relative grading.
The only non-trivial fully pointed pre-generator is 
$\mMhat^1(\overdot{\xfat_1},\overdot{\xfat_2})$. Thus, the 
coefficient algebra $\mbf$ is isomorphic to $\ztwo[X]$. Furthermore, 
there are only three non-trivial generators, namely
\[
 X^0_1=\mM^0(\,\overdown{\cdot}\,,\overup{\xfat_1}),
 \hspace{0.1cm}
 X^0_2=\mM^0(\,\overdown{\cdot}\,,\overup{\xfat_2})
 \hspace{0.15cm}
 \mbox{\rm and }
 X^1_2= \mMhat^1(\,\overdown{\cdot}\,,\overup{\xfat_2}).
\]
These fulfill the following relations
\[
 \begin{array}{lclclcl}
 (X^0_1)^{\boxtimes 3}&=&(X^0_1)^{\boxtimes 2} 
 &\hspace{0.5cm}& 
 X^0_1\boxtimes X^0_2&=&0\\
 (X^0_2)^{\boxtimes 3}&=&(X^0_2)^{\boxtimes 2} 
 & & 
 X^1_2\boxtimes (X^0_2)^{\boxtimes 2}&=&X^1_2\boxtimes X^0_2\\
 (X^1_2)^{\boxtimes 2}&=&0 
 &\mbox{\rm and }& 
 (X^0_1)^{\boxtimes 2}\boxtimes X^1_2&=&X^0_1\boxtimes X^1_2\\
 X^1_2\boxtimes X^0_1 &=&0
 &\hspace{0.5cm}& 
 X^0_2\boxtimes X^1_2&=&0\\
 X^0_2\boxtimes X^0_1&=&0
 &\hspace{0.5cm}&&&
 \end{array}
\]
So, $\mS$ is isomorphic to the associative 
$\ztwo[X]$-algebra $\mR\subset M_6(\ztwo)[X]$ which is
generated by the matrices
\[
 B=\left(\begin{matrix} A & 0 \\ 0 & 0 \end{matrix}\right),
 \hspace{0.2cm}
 C=\left(\begin{matrix} 0 & 0 \\ 0 & A \end{matrix}\right)
 \hspace{0.2cm}
 \mbox{\rm and }
 D,
\]
where $D_{ij}=1$ for $(i,j)$ equal to 
$(1,3)$, $(4,4)$, $(4,5)$, $(4,6)$, $(5,4)$, $(5,5)$, $(5,6)$
and $D_{ij}=0$ otherwise.
We see that $\bh=\mR\cup\{[\ohat]\}$ such 
that $\bh\backslash\{[\ohat]\}\cong\mR$ as $\ztwo[X]$-algebras.
\end{example}
To get an idea how the techniques from symbol homologies can be
applied, we discuss the following two examples. In Example~\ref{example01}
we intend to illustrate the relationship between maps in
Heegaard Floer homology and elements of the symbol 
algebra (see~\S\ref{sec:shafh}). In Example~\ref{example02}
we give an easy but explicitly worked out example for a
transfer of a property from Heegaard Floer homology to
knot Floer homology. We will apply the presented technique 
in \S\ref{sec:implications} and \S\ref{sec:knotcob} (cf.~\S\ref{parone}).
\begin{example}\label{example01} Given a Heegaard triple diagram 
$(\Sigma,\afat,\bfat,\gfat,z)$, denote by $\mH_1$ the Heegaard diagram
$(\Sigma,\afat,\bfat)$, denote by $\mH_2$ the Heegaard diagram 
$(\Sigma,\bfat,\gfat)$ and denote by $\mH_3$ the Heegaard 
diagram $(\Sigma,\afat,\gfat)$. We define
a map
\[ 
 \Fhat_{\afat,\bfat\gfat}
 \co
 \cfhat(\mH_1)
 \otimes
 \cfhat(\mH_2)
 \lra
 \cfhat(\mH_3)
 \]
in the following way: Let $B=\{\afat,\bfat,\gfat\}$ and denote 
by $B'$ the element $[(\afat,\gfat,\bfat)]$ of $\mI_B$. For
$\xfat\in\talpha\cap\tbeta$ and $\yfat\in\tbeta\cap\tgamma$, we 
define 
\[
 \Fhat_{\afat,\bfat\gfat}(\xfat\otimes\yfat)
 =
 \sum_{\qfat\in\talpha\cap\tgamma}
 \!\!\!\!
 \#\mM_{B'}^0(\xfat,\yfat,\qfat)\cdot\qfat
\]
and extend it to $\cfhat(\mH_1)\otimes\cfhat(\mH_2)$ as a bilinear
map.
We interpret the moduli space $\mM_{B'}^0(\,\cdot,\,\cdot,\qfat)$ as a
generator, by attaching data to its vertices. Instead of using the 
notations from the previous sections, we indicate the decorations
like introduced at the beginning of this section: we write
$
 \mM_{B'}^0
 (\,\overdown{\cdot},
 \,\overdown{\cdot},
 \overup{\qfat}
 )
$
for the generator $(\mM_{B'}^0)_{(\pointed,\fin,\fout)}$, where
$\pointed=\emptyset$, $\fin=\{(\bfat,\afat),(\gfat,\bfat)\}$ and
$\fout=\{\{(\afat,\gfat),\qfat\}\}$. Now consider 
the following element of the symbol algebra:
\[
 \fraks
 =
 \bigboxplus{\qfat\in\mbT_{\afat}\cap\mbT_{\gfat}}
 \mM_{B'}^0
 (\,\overdown{\cdot},
 \,\overdown{\cdot},
 \overset{\uparrow}{\qfat}).
\]
This symbol induces an element in the symbol homology, since all 
moduli spaces in the $\boxplus$-sum are $0$-dimensional and it is 
easy to see that
$
 \ev(\fraks)
 =\Fhat_{\afat,\bfat\gfat}.
$
Analogously, consider the following element
\[
 \fraks_{\afat\bfat}
 =
 \bigboxplus{\yfat\in\mbT_{\afat}\cap\mbT_{\bfat}}
 \mMhat_{(\afat,\bfat)}^1
 (\,\overdown{\cdot},
 \overup{\yfat})
\]
for which $\ev(\fraks_{\afat\bfat})=\parhat_{\mH_1}$ holds.
In a similar vein, we define elements $\fraks_{\bfat\gfat}$ 
and $\fraks_{\afat\gfat}$ with the properties that
$\ev(\fraks_{\bfat\gfat})=\parhat_{\mH_2}$ and 
$\ev(\parhat_{\afat\gfat})=\parhat_{\mH_3}$. All these elements
induce classes in $\bh$ we denote by 
$\fraks_{\afat\bfat;*}$, $\fraks_{\bfat\gfat;*}$, $\fraks_{\afat\gfat;*}$
and $\fraks_*$.
We would like to prove that
$
 \mathfrak{q}_*=[\mathfrak{q}]$ with
 $\mathfrak{q}
 =
 \fraks\boxtimes\fraks_{\afat\gfat}
 \boxplus
 \fraks_{\afat\bfat}\boxtimes\fraks
 \boxplus
 \fraks_{\bfat\gfat}\boxtimes\fraks
$
vanishes in $\bh$. In fact, 
\[
\parsh
 \Bigl(
 \bigboxplus{\yfat\in\talpha\cap\tgamma}\mM^1_{\afat\bfat\gfat}
   (\,\overdown{\cdot},\,\overdown{\cdot},\overup{\yfat},)\Bigr)
 =
 \bigboxplus{\yfat\in\talpha\cap\tgamma}
 \parsh
 \bigl(\mM^1_{\afat\bfat\gfat}
 (\,\overdown{\cdot},\,\overdown{\cdot},\overup{\yfat},)\bigr),\\
\]
where
$
\parsh
 \bigl(\mM^1_{\afat\bfat\gfat}
 (\,\overdown{\cdot},\,\overdown{\cdot},\overup{\yfat})\bigr)
$
equals

\[
\begin{array}{lclc}
 \bigboxplus{\xfat\in\talpha\cap\tbeta}
 \mMhat^1_{\afat\bfat}
 (\,\overdown{\cdot}\,,\overup{\xfat})
 \boxtimes
 \mM^0_{\afat\bfat\gfat}
 (\,\overdown{\cdot}\,,\,\overdown{\cdot}\,,\overup{\yfat})
 &\boxplus& 
 \bigboxplus{\qfat\in\tbeta\cap\tgamma}
 \mMhat^1_{\bfat\gfat}
 (\,\overdown{\cdot}\,,\overup{\qfat})
 \boxtimes
 \mM^0_{\afat\bfat\gfat}
 (\,\overdown{\cdot}\,,\,\overdown{\cdot}\,,\overup{\yfat})
 &\boxplus\\
 \bigboxplus{\rfat\in\talpha\cap\tgamma}
 \mM^0_{\afat\bfat\gfat}
 (\,\overdown{\cdot}\,,\overdown{\cdot},\overup{\rfat})
 \boxtimes
 \mMhat^1_{\afat\gfat}
 (\,\overdown{\cdot}\,,\overup{\yfat}).
&
\end{array}
\]
Hence, $\parsh
 \Bigl(
 \bigboxplus{\yfat\in\talpha\cap\tgamma}\mM^1_{\afat\bfat\gfat}
   (\,\overdown{\cdot},\,\overdown{\cdot},\overup{\yfat},)\Bigr)=\mathfrak{q}$
which implies $\mathfrak{q}_*=0$.
Thus, we have
\[
\begin{array}{ccccc}
 0=\ev_*(\mathfrak{q}_*)
 &=&
 \ev_*(\fraks_{\afat\gfat;*})
 \cu
 \ev_*(\fraks_*)
 &+&
 \ev_*(\fraks_*)
 \cu
 \ev_*(\fraks_{\afat\bfat;*})\\
 &&&+&
 \ev_*(\fraks_*)
 \cu
 \ev_*(\fraks_{\bfat\gfat;*})\\
&&&&\\
&=&
\parhat_{\mH_3}
\circ
\Fhat_{\afat,\bfat\gfat}
&+&
\Fhat_{\afat,\bfat\gfat}
\circ
\bigl(\parhat_{\mH_1}\otimes\mbox{\rm id}_{\bfat\gfat}\bigr)\\
&&&+&
\Fhat_{\afat,\bfat\gfat}
\circ
\bigl(\mbox{\rm id}_{\afat\bfat}\otimes\parhat_{\mH_2}\bigr).
\end{array}
\]
Consequently, $\Fhat_{\afat,\bfat\gfat}$ is a chain map. This 
illustrates that maps between Floer theories can be expressed 
as elements in the symbol homology and that properties of these maps 
are encoded in the image of $\parsh$. In this way, a property of a 
map is equivalent to the vanishing of a suitable obstruction class in
the symbol homology.
\end{example}

\begin{example}\label{example02} 
We point the reader to Example~\ref{example01} for the notations and 
definitions used here. 
We have seen in Example~\ref{example01} that $\Fhat_{\afat,\bfat\gfat}$ 
is a chain map from $\cfhat(\mH_1)\otimes\cfhat(\mH_2)$ to
$\cfhat(\mH_3)$. We denoted by $\fraks$ its canonical symbol 
(see~\S\ref{sec:shafh}).
We proved that $\Fhat_{\afat,\bfat\gfat}$ is a chain map by showing
that $\mathfrak{q}_*=[\mathfrak{q}]$ vanishes in homology. However, 
obverse that $\mathfrak{q}_*=P(\fraks_*)$ for 
\[
 P(X)
 =
 X\boxtimes\fraks_{\afat\gfat;*}
 \boxplus
 \fraks_{\afat\bfat;*}\boxtimes X
 \boxplus
 \fraks_{\bfat\gfat;*}\boxtimes X.
\]
Hence, the polynomial $P(X)\in\mbp(\mG)$ encodes the chain map 
property. Applying Theorem~\ref{thm:propp}, we see 
that $\mff_*(\fraks_*)$ fulfills the property $P_\mff$, i.e.~the 
equality
\begin{equation}
 0=P_\mff(\mff_*(\fraks_*))
 =
 \mff_*(\fraks_*)\boxtimes\mff_*(\fraks_{\afat\gfat;*})
 \boxplus
 \mff_*(\fraks_{\afat\bfat;*})\boxtimes\mff_*(\fraks_*)
 \boxplus
 \mff_*(\fraks_{\bfat\gfat;*})\boxtimes\mff_*(\fraks_*)
 \label{eq:maineq}
\end{equation}
holds. Now observe that 
$\mff_*(\fraks_{\afat\gfat;*})$, $\mff_*(\fraks_{\afat\bfat;*})$ 
and $\mff_*(\fraks_{\bfat\gfat;*})$ are the canonical symbols 
of $\partial^{\bullet,\bullet}_{\mH_3}$, 
$\parhat^{\bullet,\bullet}_{\mH_1}$ and
 $\parhat^{\bullet,\bullet}_{\mH_2}$, respectively 
(cf.~\S\ref{knotfloerhomology}).
By Proposition~\ref{intro:ev}, equation 
\eqref{eq:maineq} implies
\[
 \begin{array}{lclcl}
 0 &=&
 \!\!\!\!
 \ev_*(\mff_*(\fraks_*))
 \boxtimes
 \ev_*(\mff_*(\fraks_{\afat\gfat;*}))
 &\boxplus&
 \ev_*(\mff_*(\fraks_{\afat\bfat;*}))
 \boxtimes
 \ev_*(\mff_*(\fraks_*))\\
 &&&\boxplus&
 \ev_*(\mff_*(\fraks_{\bfat\gfat;*}))
 \boxtimes
 \ev_*(\mff_*(\fraks_*))
 \\
 &=&
 \!\!\!\!
 \parhat^{\bullet,\bullet}_{\mH_3}
 \circ\ev_*(\mff_*(\fraks_*))
 &+&
 \ev_*(\mff_*(\fraks_*))
 \circ
 (\parhat^{\bullet,\bullet}_{\mH_1}\otimes\id_{\bfat\gfat})\\
 &&&+&
 \ev_*(\mff_*(\fraks_*))
 \circ
 (\id_{\afat\bfat}\otimes\parhat^{\bullet,\bullet}_{\mH_2}).
\end{array}
\]
Hence, 
\[
 \ev_*(\mff_*(\fraks_*))
 \co
 \cfkhat(\mH_1)\otimes\cfkhat(\mH_2)
 \lra
 \cfkhat(\mH_3)
\]
is a chain map. But, observe that it equals 
$F^{\bullet,\bullet}_{\afat,\bfat\gfat}$ from \S\ref{sec:knotcob}.
As we see, the chain map property of 
$F^{\bullet,\bullet}_{\afat,\bfat\gfat}$ is a consequence
of the chain map property of $\Fhat_{\afat,\bfat\gfat}$ and 
Theorem~\ref{thm:propp}.
\end{example}
The technique presented in Example~\ref{example02} is 
used in \S\ref{sec:knotcob} to give an invariance proof for 
cobordism maps in knot Floer homology and for a surgery exact
triangle in \S\ref{sec:implications} (see~Theorem~\ref{thm:set}).

\section{Recovering Heegaard Floer Theory from Symbol 
Homology}\label{sec:rhftfsh}
\subsection{Homology}\label{sec:homology} Consider 
in $(\mT,+,\times)$ the $\mbf$-subalgebra $C$ generated 
by pre-generators for which $\#(\fin)=0$ and $\#(\fout)=1$ holds.
The subsemialgebra $\mpq^1=\Phi(C)$ of $(\mThat,\boxplus,\boxtimes)$ 
is naturally equipped with a differential
$
 \parsh
 \co
 \mpq^1
 \lra
 \mpq^1
$
by applying the algorithm presented in \S\ref{thediff}. Denote
by $\hpq=H_*(\mpq^1,\parsh)$ the associated homology theory. Given 
an element  $[(\afat,\bfat)]\in\mI_B$, we consider
\[
 \mpq^0_{(\afat,\bfat)}
 =
 \cohom\bigl<[\mA]
 =
 [A_{(\pointed,\fin,\fout)}]\in\hpq
 \,|\,
 A=\mM^0_{(\afat,\bfat)},\#(\fin)=0,\#(\fout)=1\bigr>
\]
which is a $\cohom$-submodule of $\mpq_*$ and consider
\[
  \oab
  =
  \bigboxplus{\yfat\in\talpha\cap\tbeta}
  {\bigl[\mM_{(\afat,\bfat)}^0
  (\,\overdown{\cdot},\overup{\yfat})\bigr]}
\]
which is an element in the symbol homology $\bh$. Denote 
by $\mH$ the Heegaard diagram $(\Sigma,\afat,\bfat)$ and denote
by $\fraks_{\afat\bfat}$ the canonical 
symbol (see~\S\ref{sec:shafh}) 
of $\parhat_{\mH}$. Then we obtain the following 
interpretation of Heegaard Floer homology.
\begin{theorem}\label{thm:alterhf} Multiplication from 
the right with the element 
$\fraks_{\afat\bfat}\boxtimes\oab$ defines a 
differential $\partial_\mpq$ 
on $\mpq^0_{(\afat,\bfat)}$. Denote 
by $\bigl(\mpq_{(\afat,\bfat)}^0\bigr)_*$ 
the induced homology theory. 
\begin{enumerate}
\item[(i)] We have that
\begin{equation}
\mpq_{(\afat,\bfat)}^0\otimes_{\cohom}\ztwo
\cong
\cfhat(\mH),
\end{equation}
where the left side is equipped with the 
differential $\partial_\mpq\otimes\id$ and 
where we equip $\ztwo$ with the structure of 
a $\cohom$-module using the 
map $(\evf)_*$ (see~\eqref{eq:ctdef}). Furthermore,
\begin{equation}
  \bigl(\mpq_{(\afat,\bfat)}^0\bigr)_*\otimes_{\cohom}\ztwo
  =
  H_*(\mpq_{(\afat,\bfat)}^0\otimes_{\cohom}\ztwo,\partial_\mpq\otimes\id)  
  \cong
  \hfhat(\mH),\label{eq:sfhf}
\end{equation}
where $\ztwo$ carries the structure of a $\cohom$-module.
\item[(ii)] 
Suppose we are given a map 
\[
 F
 \co
 \bigotimes_{i=1}^{n-1}\cfhat(\afat_i,\afat_{i+1})
 \lra
 \cfhat(\afat_n,\afat_1)
\]
with canonical symbol $\fraks_F$. Multiplication from the right
with the symbol $\fraks_F\boxtimes\os_{(\afat_n,\afat_1)}$ defines 
a map
\[
 \,\cdot\,\boxtimes\fraks_F\boxtimes\os_{(\afat_n,\afat_1)}\co
 \Bigl(\bigotimes_{i=1}^{n-1}\mpq_{(\afat_i,\afat_{i+1})}^0\Bigr)_*
 \lra
 \bigl(\mpq_{(\afat_n,\afat_1)}^0\bigr)_*
\]
such that, under the isomorphism given in part $(i)$, this map 
corresponds to $F$ (even on the chain level).
\end{enumerate}
\end{theorem}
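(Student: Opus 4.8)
The plan is to reduce the theorem to three explicit computations in the symbol algebra, each carried out by unwinding the operators of Definition~\ref{def:operators} and the product formula \eqref{eq:boxdouble}: first to identify $\mpq^0_{(\afat,\bfat)}$ as a free $\cohom$-module and build the candidate isomorphism $\Psi$; then to check that $\partial_\mpq=(\,\cdot\,)\boxtimes\fraks_{\afat\bfat}\boxtimes\oab$ preserves $\mpq^0_{(\afat,\bfat)}$ and squares to zero; and finally to evaluate $\partial_\mpq$ on generators and see that $\Psi$ intertwines it with $\parhat_\mH$. Part (ii) is then a parametrized copy of the last step, with a degree-$n$ polygon in place of a bigon. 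Throughout I would use, as the paper permits, that the diagrams are admissible and $\mJ_s$ generic --- so every moduli space occurring is a finite set or a compact $1$-manifold with the expected boundary and holomorphic spheres do not contribute since $n_z\ne0$ on them --- and that the only index-$0$, $n_z=0$ bigon is the constant one.

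First I would pin down $\mpq^0_{(\afat,\bfat)}$. Its generators are the classes $[\mM^0_{(\afat,\bfat)}(\overdot{\yfat},\overup{\zfat})]$ with $\yfat,\zfat\in\talpha\cap\tbeta$, and for $\yfat\neq\zfat$ the associated space has $\pi=\emptyset$, so the relation $\mA=0$ kills it; hence $\mpq^0_{(\afat,\bfat)}$ is generated over $\cohom$ by the diagonal classes $c_\yfat:=[\mM^0_{(\afat,\bfat)}(\overdot{\yfat},\overup{\yfat})]$, and since nothing of degree $1$ assembled from $\mM^0$-bigons bounds a nontrivial $\cohom$-combination of these, $\mpq^0_{(\afat,\bfat)}$ is free on $\{c_\yfat\}_{\yfat\in\talpha\cap\tbeta}$. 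Tensoring down by $(\evf)_*\co\cohom\to\ztwo$ (see \eqref{eq:ctdef}) gives the $\ztwo$-space with basis $\{c_\yfat\otimes1\}$, and I set $\Psi(c_\yfat\otimes1)=\yfat$, tautologically a $\ztwo$-isomorphism onto $\cfhat(\mH)$; freeness of $\mpq^0_{(\afat,\bfat)}$ then lets $-\otimes_\cohom\ztwo$ commute with the homology of the complex below, which is what reduces the homology identities in (i) and \eqref{eq:sfhf} to the chain-level statement about $\Psi$.

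Next I would compute $\partial_\mpq(c_\yfat)=c_\yfat\boxtimes\fraks_{\afat\bfat}\boxtimes\oab$. Pairing $c_\yfat$ against the summand $\mMhat^1_{(\afat,\bfat)}(\overdown{\cdot},\overup{\xfat})$ of $\fraks_{\afat\bfat}$ applies $\mKoutq$ to $c_\yfat$ --- turning it into the fully pointed index-$0$ bigon $\mM^0_{(\afat,\bfat)}(\overdot{\yfat},\overdot{\yfat})=1\in\mbf$ --- and applies $\mKind$ to the $\mMhat^1$-bigon, promoting its flow-in vertex to a pointed vertex carrying $\yfat$, so what remains is $[\mMhat^1_{(\afat,\bfat)}(\overdot{\yfat},\overup{\xfat})]$; pairing with the summand $\mM^0_{(\afat,\bfat)}(\overdown{\cdot},\overup{\zfat})$ of $\oab$ applies $\mKoutq$ once more, producing the fully pointed $\mMhat^1_{(\afat,\bfat)}(\overdot{\yfat},\overdot{\xfat})\in\mbf$, and forces $\zfat=\xfat$, returning $c_\xfat$. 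Collecting terms, $\partial_\mpq(c_\yfat)=\sum_{\xfat}\mathfrak{c}_{\yfat\xfat}\,c_\xfat$ with $\mathfrak{c}_{\yfat\xfat}=[\mMhat^1_{(\afat,\bfat)}(\overdot{\yfat},\overdot{\xfat})]$, which lies in $\mpq^0_{(\afat,\bfat)}$; each $\mathfrak{c}_{\yfat\xfat}$ is $\pco$-closed (an index-$1$ bigon moduli is $0$-dimensional) and hence a class in $\cohom$, and $\partial_\mpq^2(c_\yfat)=\sum_{\tfat}\bigl(\sum_{\xfat}\mathfrak{c}_{\yfat\xfat}\,\mathfrak{c}_{\xfat\tfat}\bigr)c_\tfat$ where the inner sum is exactly $\pco\bigl(\mM^2_{(\afat,\bfat)}(\overdot{\yfat},\overdot{\tfat})\bigr)$ --- the $1$-dimensional index-$2$ bigon moduli space $\yfat\to\tfat$ degenerating into pairs of index-$1$ bigons --- hence $0$ in $\cohom$. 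So $\partial_\mpq$ is a well-defined differential (this is the ``$\partial^2=0$ from the ends of a $1$-manifold'' mechanism realized inside the coefficient algebra), and applying $(\evf)_*$ to the formula for $\partial_\mpq(c_\yfat)$ and then $\Psi$ yields $\sum_\xfat\#\mMhat^1_{(\afat,\bfat)}(\yfat,\xfat)\,\xfat=\parhat_\mH\yfat$, so $\Psi$ is a chain isomorphism and (i) follows.

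For part (ii) I would repeat the same computation with $\fraks_F=\bigboxplus{\qfat}\mM^0_{B'}(\overdown{\cdot},\dots,\overdown{\cdot},\overup{\qfat})$ in place of $\fraks_{\afat\bfat}$: writing a generator of $\bigotimes_{i=1}^{n-1}\mpq^0_{(\afat_i,\afat_{i+1})}$ as the $\times$-product of diagonal bigons $c_{\yfat_1},\dots,c_{\yfat_{n-1}}$, the product with $\fraks_F$ promotes, one vertex at a time, each flow-in vertex of $\fraks_F$ to a pointed vertex carrying the corresponding $\yfat_i$, each step contributing a fully pointed index-$0$ bigon equal to $1\in\mbf$; what is left is $\sum_{\qfat}\#\mM^0_{B'}(\yfat_1,\dots,\yfat_{n-1},\qfat)\cdot[\mM^0_{B'}(\overdot{\yfat_1},\dots,\overdot{\yfat_{n-1}},\overup{\qfat})]$, and the closing $\boxtimes\os_{(\afat_n,\afat_1)}$ collapses the flow-out into the diagonal bigon $c_\qfat$ of $\mpq^0_{(\afat_n,\afat_1)}$, so under $\Psi$ this reads $\yfat_1\otimes\dots\otimes\yfat_{n-1}\mapsto\sum_\qfat\#\mM^0_{B'}(\yfat_1,\dots,\yfat_{n-1},\qfat)\,\qfat=F(\yfat_1\otimes\dots\otimes\yfat_{n-1})$, which is the chain-level assertion; its descent to homology, agreeing with $F$ there, comes from $\ev_*$ being a semialgebra morphism together with the obstruction-class argument of Example~\ref{example01}. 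The step I expect to be the main obstacle is the purely combinatorial one: checking that the decoration algorithm --- $\mKoutq$, $\mKind$, the collapse of index-$0$ bigons to $1\in\mbf$, and the matching conditions for $\boxplus$ and $\boxtimes$ --- is tracked faithfully through the non-commutative products, in particular that in (ii) the inputs are plugged into the correct flow-in vertices of $\fraks_F$ (matching a destination vertex of one bigon with a source vertex of the next, which are the same type of vertex because $\mI_B$ identifies boundary-condition labelings up to rotation) and that the off-diagonal bigons really vanish so that $\mpq^0_{(\afat,\bfat)}$ has the expected rank; once this bookkeeping is fixed every step is forced, the only genuinely homological input being the vanishing of $\pco\bigl(\mM^2_{(\afat,\bfat)}(\overdot{\yfat},\overdot{\tfat})\bigr)$ in $\cohom$.
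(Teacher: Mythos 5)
Your proposal is correct and follows essentially the paper's own route: reduce $\mpq^0_{(\afat,\bfat)}$ to the diagonal classes $[\mM^0_{(\afat,\bfat)}(\overdot{\xfat},\overup{\xfat})]$, map them to intersection points via the counting map $\evf$, and compute right multiplication by $\fraks_{\afat\bfat}\boxtimes\oab$ (resp.\ $\fraks_F\boxtimes\os$) through the $\mKoutq$/$\mKind$ bookkeeping so that the coefficients are fully pointed index-$1$ bigons (resp.\ pointed polygon counts) whose $\evf$-count reproduces $\parhat_\mH$ (resp.\ $F$); your explicit check that $\partial_\mpq^2=0$ via degenerations of index-$2$ bigons in the coefficient algebra is a detail the paper leaves implicit. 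The one place the paper does more is in (ii), where it takes the canonical symbol in the general form of a $\boxplus$-sum of $\boxtimes$-products of generators $[\mR_{i,j}]$ (so compositions are covered), chaining $\mKinq(\mKoutq(\mR_{i,j}))$ into a coefficient $Q_\yfat$ with $\evf(Q_\yfat)=\left.F(\xfat)\right|_{\yfat}$, whereas you treat only sums of single polygon generators (while, conversely, spelling out the multi-input tensor case more explicitly than the paper); extending your computation to such products is the same bookkeeping iterated rather than a new idea, so this is a difference of generality, not of method.
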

\begin{proof} Denote by $\mH$ the Heegaard diagram $(\Sigma,\afat,\bfat)$. 
For $\xfat\in\talpha\cap\tbeta$ denote by $X_\xfat$ the element 
$\bigl[\mM_{(\afat,\bfat)}^0
(\,\overdot{\cdot},\overset{\uparrow}{\xfat})\bigr]\otimes 1$. 
Define $\rho(X_\xfat)=\xfat$ and extend to a map
\[
 \rho\co
 \mpq_{(\afat,\bfat)}^0\otimes_{\cohom}\ztwo
 \lra
 \cfhat(\mH)
\]
as a morphism of modules, where
\begin{equation}
  \rho(f\cp\mA\otimes 1)
  =
  \evf(f)\cdot\rho(\mA\otimes 1)
  \label{eq:rholinearity}
\end{equation}
for $\mA\in\mpq_{(\afat,\bfat)}^0$ and $f\in\cohom$. It is easy 
to see that $\rho$ is a bijection. Furthermore, consider the 
following chain of equalities:
\begin{eqnarray*}
(\rho\circ\partial_\mpq\otimes\id)(X_\xfat)
&=&
\rho\Bigl(\bigboxplus{\yfat\in\talpha\cap\tbeta}
{\bigl[\widehat{\mM}^1_{(\afat,\bfat)}(\overdot{\xfat},\overdot{\yfat})\bigr]
\cp
X_\yfat}\Bigr)\\
&=&
\sum_{\yfat\in\talpha\cap\tbeta}
\#\widehat{\mM}^1_{(\afat,\bfat)}(\xfat,\yfat)\cdot\yfat\\
&=&
\parhat_{\mH}(\xfat)\\
&=&
\parhat_{\mH}
\Bigl(\rho(X)\Bigr).
\end{eqnarray*}
Since the map $\rho$ is a bijection, the given computation shows 
that $\rho$ induces a map 
\[
\rho_*\co
H_*(\mpq^0_{(\afat,\bfat)}\otimes_{\cohom}\ztwo,\partial_\mpq\otimes\id)
\lra
\hfhat(\mH)
\] 
which is an isomorphism. This proves that the right equality 
in \eqref{eq:sfhf} is true. The left equality in \eqref{eq:sfhf} is
clear.\\
Given a map $F\co\cfhat(\afat,\bfat)\lra\cfhat(\afat',\bfat')$ which 
is defined by counting elements of moduli spaces of Whitney polygons,
there are generators $\mR_{i,j}$, $j=1,\dots,l$ and $i=1,\dots,k_j$
such that
\[
 \fraks_F
 =
 \bbigboxplus{j=1}{l}\,
 \bbigboxtimes{i=1}{k_j}
 \,[\mR_{i,j}]
\]
is its canonical symbol. The data of $\mR_{i,j}$ are denoted by 
$\fin(i,j)$, $\fout(i,j)$ and $\pointed(i,j)$. Since $\fraks_F$ is
the canonical symbol of $F$, for every $i$ and $j$ the flow-in vertices
of $\mR_{i,j}$ appear as flow-out vertices of $\mR_{i-1,j}$. We denote
by $\fin$, $\fout$ and $\pointed$ the decorations of $\mM^0_{(\afat,\bfat)}(\overdot{\xfat},\overup{\xfat})$ and 
we denote by $I_\yfat$ the set of $j$ for which 
$\{(\afat',\bfat'),\yfat\}\in\pointed(\mR_{k_j,j})$. Hence, 
we have that
\[
[\mM^0_{(\afat,\bfat)}(\overdot{\xfat},\overup{\xfat})]
 \boxtimes
 \fraks_F
 \boxtimes
 \os
 =
\bigboxplus{\yfat\in\mbT_{\afat'}\cap\mbT_{\bfat'}}
\!\!\!\!\!\!
Q_\yfat\cp
\bigl[\mM^0_{(\afat',\bfat')}(\overdot{\yfat},\overup{\yfat})\bigr]
\]
with
\[
\begin{array}{rcl}
Q_\yfat
&=&
\bigboxplus{j\in I_\yfat}
 \Bigl(\bigl[\mM^0_{(\afat,\bfat)}(\overdot{\xfat},\overdot{\xfat})
 \cp
 \mKinq(\mKoutq(\mR_{1,j})) 
 \cp
 \bigl(
 \bigcp{i=2}{k_j}\mKin{\fout(i-1,j)}(\mKoutq(\mR_{i,j}))
 \bigr)\Bigr)\\
 &=&
 \bigboxplus{j\in I_\yfat}
 \Bigl(
 \bigl[\mKinq(\mKoutq(\mR_{1,j})) 
 \cp
 \bigl(
 \bigcp{i=2}{k_j}\mKin{\fout(i-1,j)}(\mKoutq(\mR_{i,j}))
 \bigr)\bigr]
 \Bigr),
\end{array}
\]
where the second equality holds since in the coefficient algebra  
$\mM^0_{(\afat,\bfat)}(\overdot{\xfat},\overdot{\xfat})=1$. But it
is not hard to see that $\evf(Q_\yfat)=\left.F(\xfat)\right|_\yfat$.
So, under the morphism $\rho$ the multiplication from the right with
$\fraks_F\boxtimes\os$ corresponds to the map $F$.
\end{proof}
For every element $\fraks\in\bh$, there
exists a unique pair of attaching circles $\afat$, $\bfat$ such that
$\fraks\boxtimes\oab\not=\ohat$. Hence, in products we can suppress 
the attaching circles from the notation and just write $\fraks\boxtimes\os$ 
instead. 
\begin{definition}
For a symbol $\fraks\in\bh$, define $\fraks_\os$ to be the 
product $\fraks\boxtimes\os$. Furthermore, for an 
element $P\in\mbp$ we define $P_\os$ to be the polynomial expression 
we obtain by replacing all coefficients $c_i$ by $c_i\boxtimes\os$.
\end{definition}
\begin{prop} A symbol $\fraks_*\in\bh$ fulfills a property $P$ if and only if
$\fraks_\os$ fulfills the property $P_\os$.
\end{prop}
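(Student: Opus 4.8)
The plan is to reduce the equivalence to the single algebraic identity
\[
 P_\os(\fraks_\os)=P(\fraks_*)\boxtimes\os ,
\]
and then to observe that right multiplication by $\os$ is injective on the symbols that occur. The mechanism behind the identity is that $\os$ is the canonical symbol of the identity map (a $\boxplus$‑sum of Maslov‑index‑$0$ bigons); attaching it on the right to a morphism symbol only recolours the flow‑out vertex as a pointed vertex and appends a contractible trailing bigon, so interior occurrences of $\os$ inside a $\boxtimes$‑product can be erased.

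First I would prove the identity. Write $P$ as a $\boxplus$‑sum of monomials; by $\boxplus$‑bilinearity of $\boxtimes$ (Proposition~\ref{intro:times}) it suffices to treat a single monomial $M(X)$, a $\boxtimes$‑product whose factors are the variable $X$ and the coefficients $c_i\in\bh(\mG)$ in some order. Forming $P_\os$ replaces each $c_i$ by $c_i\boxtimes\os$, and evaluating at $\fraks_\os$ replaces each $X$ by $\fraks_*\boxtimes\os$; using associativity of $\boxtimes$ (Proposition~\ref{intro:times}) we flatten this to $M(\fraks_*)$ with one copy of $\os$ inserted immediately after each factor. The key point is that every interior $\os$ can be deleted: if $a$ is a morphism symbol whose flow‑out vertex lies on the pair $(\afat,\bfat)$ and $b$ has its matching flow‑in vertex on $(\afat,\bfat)$, then unwinding the definition of $\boxtimes$ in~\eqref{eq:boxdouble} together with the operators $\mKoutq$ and $\mKinq$, forming $a\boxtimes\os$ merely recolours the flow‑out vertex of $a$ as pointed and appends a trailing Maslov‑index‑$0$ bigon; once the flow‑out of that bigon is glued to $b$ the bigon becomes fully pointed and hence equals $1\in\mbf$, so $a\boxtimes\os\boxtimes b=a\boxtimes b$. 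Iterating this over all interior positions collapses every $\os$ except the terminal one, leaving exactly $M(\fraks_*)\boxtimes\os$. Provided $P(\fraks_*)\neq\ohat$ all these terminal $\os$'s coincide with the $\os$ of $P(\fraks_*)$ (the summands of $P(\fraks_*)$ must have matching outward multiplicities, hence the same flow‑out), so summing over monomials gives the identity; the degenerate case $P(\fraks_*)=\ohat$ is handled the same way, since then $P_\os(\fraks_\os)=\ohat\neq 0$ as well.

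Granting the identity, the forward direction is immediate: if $\fraks_*$ has property $P$ then $P(\fraks_*)=0$ in $\bh$, hence $P_\os(\fraks_\os)=P(\fraks_*)\boxtimes\os=0\boxtimes\os=0$ by bilinearity of $\boxtimes$, so $\fraks_\os$ has property $P_\os$.

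For the converse I need that $P(\fraks_*)\boxtimes\os=0$ forces $P(\fraks_*)=0$, i.e.\ that $\cdot\boxtimes\os$ is injective on $\bh(\mG)$. Since $\parsh(\os)=0$ (it is built from $0$‑dimensional moduli spaces) and $\parsh$ is a derivation for $\boxtimes$, the map $\cdot\boxtimes\os$ is a chain map, so it suffices to invert it: by the computation above, for a morphism symbol $\fraks$ with flow‑out vertex $v$ carrying slot point $\qfat$, the product $\fraks\boxtimes\os$ is $\fraks$ with $v$ recoloured pointed and a trailing copy of the identity bigon $\mM^0(\overdot{\qfat},\overup{\qfat})$ attached; deleting that bigon and recolouring $v$ back to a flow‑out vertex recovers $\fraks$ (equivalently, $\fraks$ is read off via the correspondence of Theorem~\ref{thm:alterhf}). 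Hence $\cdot\boxtimes\os$ is injective on the chain level and on $\bh$, and $P_\os(\fraks_\os)=0$ yields $P(\fraks_*)=0$, i.e.\ $\fraks_*$ has property $P$. The hard part is precisely this last step: one must check that the flow‑out vertex of $P(\fraks_*)$ is unambiguously recoverable from $P(\fraks_*)\boxtimes\os$, which is where it is used that $\fraks_*$ — and therefore every monomial $P(\fraks_*)$ built from it — is a genuine morphism symbol with a single distinguished flow‑out vertex, rather than an arbitrary element of $\mThat$.
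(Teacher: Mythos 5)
Your first half --- the identity $P_\os(\fraks_\os)=P(\fraks_*)\boxtimes\os$ obtained by collapsing interior copies of $\os$ (the trailing Maslov-index-$0$ bigon becomes fully pointed, hence equals $1\in\mbf$), together with the forward implication --- is essentially the paper's argument, which packages the same mechanism into the one-line identity $\fraks\boxtimes\os\boxtimes\frakt=\fraks\boxtimes\frakt$.

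The converse, however, has a genuine gap. You reduce it to injectivity of $\cdot\,\boxtimes\os$ on $\bh$ and infer that from injectivity on the chain level (``deleting the trailing bigon and recolouring recovers $\fraks$''). Chain-level injectivity of a chain map does not give injectivity on homology: the hypothesis $P(\fraks_*)\boxtimes\os=0$ in $\bh$ only says that a chain representative satisfies $P(\fraks)\boxtimes\os=\parsh(\mathfrak{q})$ for some chain $\mathfrak{q}$, and your deletion operation is only described on elements that visibly carry the trailing identity bigon, so you cannot apply it to $\parsh(\mathfrak{q})$ unless you either (i) extend the deletion to a globally defined chain map (a retraction of $\cdot\,\boxtimes\os$) and check that it commutes with $\parsh$, or (ii) control the shape of $\mathfrak{q}$ itself. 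The paper takes route (ii): since $\os$ is not in the image of $\parsh$, the chain $\mathfrak{q}$ must be of the form $\frakt\boxtimes\os$; then $\parsh(\os)=0$ and the Leibniz rule give $P(\fraks)\boxtimes\os=\parsh(\frakt)\boxtimes\os$, whence $P(\fraks)=\parsh(\frakt)$ and so $P(\fraks_*)=0$. Your closing remark about recovering the flow-out vertex addresses well-definedness of the deletion on the image of $\cdot\,\boxtimes\os$, not this homological issue, so the step ``injective on the chain level and on $\bh$'' is precisely where the argument currently fails.
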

\begin{proof} For symbols $\fraks_*=[\fraks]$, $\frakt_*=[\frakt]\in\bh$ 
a simple calculation shows that
$\fraks\boxtimes\os\boxtimes\frakt=\fraks\boxtimes\frakt$.  
Consequently, we have that
\[
  P_{\os}(\fraks_\os)=P_{\os}(\fraks_*\boxtimes\os)=P(\fraks_*)\boxtimes\os.
\]
Now suppose that $P(\fraks_*)$ vanishes, then $P_\os(\fraks_*\boxtimes\os)=0$. Conversely, given that $P_\os((\fraks_*)_\os)$ vanishes, we have that $P(\fraks_*)\boxtimes\os=0$. So, there is an element $\mathfrak{q}$ such that
$P(\fraks)\boxtimes\os=\parsh(\mathfrak{q})$. Since $\os$ is not 
an element in the image of $\parsh$, the 
element $\mathfrak{q}$ is
of the form $\frakt\boxtimes\os$ which 
implies that $P(\fraks)=\parsh(\frakt)$ and, hence, $P(\fraks_*)=0$. 
\end{proof}
\begin{proof}[Proof of Theorem~\ref{main}]
Recall that by Theorem~\ref{thm:alterhf} we may regard the modules 
$\mpq^0_{(\afat,\bfat)}$ and their respective homology theories 
$(\mpq_{(\afat,\bfat)}^0)_*$ as being equivalent to Heegaard Floer 
homology. Furthermore, symbols of type $\fraks_\os$ can be regarded as 
maps between Floer homologies. Applying Theorem~\ref{thm:propp} and 
the fact that {\it filtering} induces a morphism $\mff_*$, we see that 
if a map $\fraks_\os$ fulfills a property $P_\os$, then
$\mff_*(\fraks_\os)=\mff_*(\fraks)_\os$ fulfills the property 
$(P_\mff)_\os$. 
\end{proof}

\subsection{Cohomology} In a similar vein it is possible to recover Heegaard
Floer cohomology in terms of the symbol homology theory. To do that, we
have to give a couple of definitions: Consider in $(\mT,+,\times)$ 
the $\mbf$-subalgebra $C$ generated by pre-generators with the property that
$\#(\fin)>0$ and $\#(\fout)\leq1$. The semialgebra $\mpq^{coh}=\Phi(C)$ of
$(\mThat,\boxplus,\boxtimes)$ is naturally equipped with a differential
$
  \parsh
  \co
  \mpq^{coh}
  \lra
  \mpq^{coh}
$
by applying the algorithm presented in \S\ref{thediff}. Denote by $\mpq^{coh}_*$
the homology theory associated to the complex $(\mpq^{coh},\parsh)$. Given
an element $[(\afat,\bfat)]\in\mI_B$, consider
\[
 \mpq^{coh}_{(\afat,\bfat)}
 =
 \cohom\bigl<[\mA]
 =
 [A_{(\pointed,\fin,\fout)}]\in\mpq^{coh}_*
 \,|\,
 A=\mM^0_{(\afat,\bfat)},
 \#(\fin)=1,\#(\fout)=0
 \bigr>.
\]
Define $\mH$, $\fraks_{\afat\bfat}$ and $\oab$ as in \S\ref{sec:homology}.
\begin{theorem}\label{thm:altercohf} Multiplication from the left with 
the element $\oab\boxtimes\fraks_{\afat\bfat}$ defines a 
differential $\partial_\mpq^{coh}$ on $\mpq^{coh}_{(\afat,\bfat)}$. Denote 
by $(\mpq_{(\afat,\bfat)})^{coh}_*$ the induced homology theory. 
\begin{enumerate}
 \item[(i)] We have that
\begin{equation}
\mpq_{(\afat,\bfat)}^{coh}\otimes_{\cohom}\ztwo
\cong
\cfhat\,\!^*(\mH)
\end{equation}
where the left side is equipped with the differential 
$\partial_\mpq^{coh}\otimes\id$ and where we equip $\ztwo$ with the 
structure of an $\cohom$-module using the map $(\evf)_*$ (see~\eqref{eq:ctdef}). 
Furthermore,
\begin{equation}
  \bigl(\mpq_{(\afat,\bfat)}^{coh}\bigr)_*\otimes_{\cohom}\ztwo
  =
  H_*(\mpq_{(\afat,\bfat)}^{coh}\otimes_{\cohom}\ztwo,
  \partial_\mpq^{coh}\otimes\id)  
  \cong
  \hfhat\,\!^*(\mH).\label{eq:sfchf}
\end{equation}
where $\ztwo$ carries the structure of a $\cohom$-module.
\item[(ii)] Given a map between two Heegaard Floer chain complexes
$\cfhat(\bfat,\afat)$ and $\cfhat(\bfat',\afat')$, denote by
$\fraks_F$ its associated canonical symbol. Then, multiplication from
the left with the element $\oab\boxtimes\fraks_F$ induces a map
\[
 \oab\boxtimes\fraks_F\boxtimes\,\cdot\,\co
 \bigl(\mpq_{(\afat,\bfat)}^{coh}\bigr)_*
 \lra
 \bigl(\mpq_{(\afat',\bfat')}^{coh}\bigr)_*
\]
such that, under the isomorphism given in part $(i)$, this map
corresponds to $F$ (even on the chain level).
\end{enumerate}
\end{theorem}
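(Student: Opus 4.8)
The plan is to run the argument of Theorem~\ref{thm:alterhf} with the roles of the flow-in and flow-out decorations interchanged and with left multiplication in place of right multiplication. First I would, for $\xfat\in\talpha\cap\tbeta$, introduce the co-generator
\[
 X^{\xfat}
 =
 \bigl[\mM^0_{(\afat,\bfat)}(\,\overdown{\cdot},\overdot{\xfat})\bigr]\otimes 1
 \;\in\;
 \mpq^{coh}_{(\afat,\bfat)}\otimes_{\cohom}\ztwo ,
\]
realized by the constant bigon at $\xfat$ carrying one flow-in vertex and one pointed vertex with slot point $\xfat$, and set $\rho^{coh}(X^{\xfat})=\xfat^{*}$, the element of $\cfhat\,\!^*(\mH)$ dual to $\xfat$; equivalently, $\rho^{coh}$ is the restriction of $\ev$, which on such a co-generator reads off the functional $\yfat\mapsto\#\mM^0_{(\afat,\bfat)}(\yfat,\xfat)$ (cf.\ the calculations in \S\ref{sec:eai}). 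I would then extend $\rho^{coh}$ to a morphism of modules by $\rho^{coh}(f\cp\mA\otimes 1)=\evf(f)\cdot\rho^{coh}(\mA\otimes 1)$, exactly as in \eqref{eq:rholinearity}. Since the $X^{\xfat}$ are in bijection with $\talpha\cap\tbeta$, hence with the dual basis of $\cfhat\,\!^*(\mH)$, the map $\rho^{coh}$ is a bijection, and the left equality in \eqref{eq:sfchf} is then immediate.

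Next I would check that $\partial_\mpq^{coh}$, i.e.\ left multiplication by $\oab\boxtimes\fraks_{\afat\bfat}$, is a differential on $\mpq^{coh}_{(\afat,\bfat)}$; this is identical to the corresponding verification in Theorem~\ref{thm:alterhf} and rests on $\parsh\circ\parsh=0$ together with the fact that $\oab$ and $\fraks_{\afat\bfat}$ are $\parsh$-closed, their underlying moduli spaces being $0$-dimensional. The heart of part~(i) is then the identity
\[
 \rho^{coh}\circ(\partial_\mpq^{coh}\otimes\id)(X^{\xfat})
 =
 \delta_\mH(\xfat^{*})
 =
 \delta_\mH\bigl(\rho^{coh}(X^{\xfat})\bigr),
\]
where $\delta_\mH=(\parhat_\mH)^{*}$ denotes the Heegaard Floer coboundary. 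To prove it I would expand $\bigl(\oab\boxtimes\fraks_{\afat\bfat}\bigr)\boxtimes X^{\xfat}$ using the definition of $\oab$, the description of $\fraks_{\afat\bfat}$ as the canonical symbol of $\parhat_\mH$, the rule~\eqref{eq:boxdouble} for the $\boxtimes$-product together with the operators $\mKoutq,\mKinq$, and the relation $\mM^0_{(\afat,\bfat)}(\overdot{\xfat},\overdot{\xfat})=1$ in $\mbf$; the sums over vertices collapse, since on the right only the constant bigon at $\xfat$ contributes, leaving $\sum_{\zfat}\,[\mMhat^1_{(\afat,\bfat)}(\overdot{\zfat},\overdot{\xfat})]\cp X^{\zfat}$, whose image under $\rho^{coh}$ is $\sum_{\zfat}\#\mMhat^1_{(\afat,\bfat)}(\zfat,\xfat)\cdot\zfat^{*}=\delta_\mH(\xfat^{*})$. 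Since $\rho^{coh}$ is a bijection, this exhibits it as an isomorphism of chain complexes from $\bigl(\mpq^{coh}_{(\afat,\bfat)}\otimes_{\cohom}\ztwo,\partial_\mpq^{coh}\otimes\id\bigr)$ onto $\bigl(\cfhat\,\!^*(\mH),\delta_\mH\bigr)$, which after passing to homology yields the right isomorphism in \eqref{eq:sfchf} and so proves part~(i).

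For part~(ii) I would write the canonical symbol $\fraks_F$ of $F$ as a $\boxplus$-sum of $\boxtimes$-products of generators $[\mR_{i,j}]$, just as in the proof of Theorem~\ref{thm:alterhf}, and compute the left product $\oab\boxtimes\fraks_F\boxtimes X^{\xfat}$. Using associativity of $\boxtimes$ and the operator identities for $\mKoutq$ and $\mKinq$ (Definition~\ref{def:operators}, Proposition~\ref{intro:times}), this collapses to a sum $\sum_{\yfat}Q^{\yfat}\cp X^{\yfat}$, indexed by the relevant intersection points of the target diagram, whose coefficients satisfy $\evf(Q^{\yfat})=\left.F(\xfat)\right|_{\yfat}$ by the same bookkeeping carried out in that proof; hence, under $\rho^{coh}$, left multiplication by $\oab\boxtimes\fraks_F$ corresponds on the chain level to $F$, and then on homology via part~(i). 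The step I expect to be the main obstacle is purely the bookkeeping: one must keep straight the order reversal in $\ev(\mA_1\boxtimes\mA_2)=\ev(\mA_2)\cu\ev(\mA_1)$ when multiplying on the left, and reconcile the orientation conventions ($(\afat,\bfat)$ versus $(\bfat,\afat)$ at the level of cochain complexes) so that $\oab\boxtimes\fraks_F\boxtimes\,\cdot\,$ indeed lands in $\mpq^{coh}_{(\afat',\bfat')}$ and represents $F$ under the identification of part~(i). Once the dictionary ``flow-in vertex $\leftrightarrow$ dual basis vector, left multiplication $\leftrightarrow$ precomposition'' is fixed, the displayed computations are formally the transposes of those already carried out for Theorem~\ref{thm:alterhf}.
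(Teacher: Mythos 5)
Your proposal is correct and matches the paper's intent exactly: the paper's proof of this theorem consists of the single remark that it ``goes the same way as the proof of Theorem~\ref{thm:alterhf}'', and your argument is precisely that dualized adaptation, with the generators $\mM^0_{(\afat,\bfat)}(\,\overdown{\cdot},\overdot{\xfat})$ playing the role of the dual basis, left multiplication collapsing to $\bigboxplus_{\zfat}[\mMhat^1_{(\afat,\bfat)}(\overdot{\zfat},\overdot{\xfat})]\cp X^{\zfat}$, and the same bookkeeping as in Theorem~\ref{thm:alterhf}(ii) for canonical symbols.
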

\begin{proof} The proof goes the same way as the proof of 
Theorem~\ref{thm:alterhf}.
\end{proof}

\section{U-equivariant Symbol Homology}\label{sec:uesh}
The construction of symbol homology and its ambient algebra can 
be altered in various ways without destroying the 
properties derived in \S\ref{sec:tshp}. Here, we present a variant of 
this theory by entering a $U$-variable. This modification is necessary 
to capture information from flavors of Heegaard Floer homology which also 
admit a $U$-variable in their definition. For a moduli 
space $\mM^\mu_{B'}$, we define 
\[
  \mM^{\mu;i}_{B'}
  =\{\phi\in\mM^\mu_{B'}\,|\,n_w(\phi)=i\}.
\]  
In the following, we will decorate the $\mM^{\mu;i}_{B'}$ with data and 
follow the construction process as outlined in \S\ref{sec:tshp} with 
some slight adaptions: Instead of $\mbf$ as coefficients we use 
$\mbf[U]$ as coefficients to generate the algebra $\mThat$. Furthermore, we
define the sum $\boxplus$ and the product $\boxtimes$ as in 
\S\ref{sec:tshp}, with the additional condition
\[
  (U\cp\mA)\boxplus(U\cp\mB)=U\cp(\mA\boxplus\mB)
\]
and
\[
  (U\cp\mA)\boxtimes\mB=U\cp(\mA\boxtimes\mB)=\mA\boxtimes(U\cp\mB)
\]
imposed. The symbol algebra we obtain with this new construction will be 
denoted by $\mShat_U$ and the associated symbol homology by $\bhu$.
Every moduli space $\mM^\mu_{B'}$ is a disjoint union of 
the $\mM^{\mu;i}_{B'}$, for $i\geq0$. Inspired by 
this, it is possible to define a map from the symbol 
algebra $\mShat$ to $\mShat_U$ in the following way: Given a generator 
$\mA=(\mM^\mu_{B'})_{(\pointed,\fin,\fout)}$, 
we denote by $\mA^i$ the element $(\mM^{\mu;i}_{B'})_{(\pointed,\fin,\fout)}$. 
We require that $\mff_U(\ohat)=\ohat$ and that for
a generator $\mA$ we have
\[
 \mff_U(\mA)
 =
 \bigboxplus{i}{}{\,U^i\cp\mA^i}.
\]
Observe that the sum is finite. This assignment extends to a morphism
\[
 \mff_U
 \co
 \mShat
 \lra
 \mShat_U
\]
of semialgebras. 
\begin{prop} If $\mJ_s$ is $w$-respectful, then the map $\mff_U$ is a 
chain map with respect to the differential $\parsh$ and, thus, descends 
to a $(\cohom,\mathfrak{f}_{U,*})$-morphism
\[
 \mff_{U;*}
 \co
 \bh
 \lra
 \bhu
\]
 of semialgebras.
\end{prop}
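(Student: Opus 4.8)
The plan is to follow the proof of Theorem~\ref{thm:filtmap} almost verbatim, the one new geometric ingredient being that the intersection number $n_w$ is additive under degeneration of holomorphic polygons. First I would record the consequences of $w$-respectfulness: since $\mJ_s$ is $w$-respectful, $V_w=\{w\}\times\symgmo$ is a complex submanifold of $\symg$, so every $\mJ_s$-holomorphic polygon meets $V_w$ non-negatively and hence $n_w(\phi)\geq 0$ for all such $\phi$; moreover $n_w$ is a homotopical invariant, so it is constant on the connected components of any moduli space $\mM^\mu_{B'}$ -- whence each stratum $\mM^{\mu;i}_{B'}$, being a union of components of the Gromov-compact space $\mM^\mu_{B'}$, is again Gromov-compact and $\parsh$ is defined on $\mShat_U$ by the same algorithm as in \S\ref{thediff} -- and $n_w$ behaves additively under splicing of polygons.

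Next I would check the identity $\mff_U\circ\parsh=\parsh\circ\mff_U$ on a generator $\mA=(\mM^\mu_{B'})_{(\pointed,\fin,\fout)}$, writing $\mA^i=(\mM^{\mu;i}_{B'})_{(\pointed,\fin,\fout)}$. On the one hand, the boundary algorithm of \S\ref{thediff} expresses $\parsh(\mA)$ as a $\boxplus$-sum of terms $\mA^{ab;\qfat}_1\boxtimes\mA^{ab;\qfat}_2$ over nice pairs $(a,b)$ and points $\qfat\in\mbT_a\cap\mbT_b$, together with terms $\mA^{ab;\qfat}_1\cp\mA^{ab;\qfat}_2$ over the non-nice pairs; applying the semialgebra morphism $\mff_U$ -- which sends a generator $\mathcal{B}$ to the $\boxplus$-sum over $i\geq 0$ of $U^i\cp\mathcal{B}^i$ -- rewrites $\mff_U(\parsh(\mA))$ as a $\boxplus$-sum, indexed by the pair $(a,b)$, the point $\qfat$ and integers $i_1,i_2\geq 0$, of the terms $U^{i_1+i_2}\cp\bigl((\mA^{ab;\qfat}_1)^{i_1}\boxtimes(\mA^{ab;\qfat}_2)^{i_2}\bigr)$ (and likewise with $\cp$ in place of $\boxtimes$ for the non-nice terms). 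On the other hand, from the defining relations \eqref{eq:propdiff1} and the fact that $\pco$ acts $U$-linearly on $\mbf[U]$, one gets that $\parsh(\mff_U(\mA))$ equals the $\boxplus$-sum over $i\geq 0$ of $U^i\cp\parsh(\mA^i)$ -- the summands all carry the decorations of $\mA$, so their data match -- and, because $n_w$ is additive under splicing, the codimension-$1$ boundary components of $\pi(\mA^i)$ are exactly those of $\pi(\mA)$ whose two $n_w$-values sum to $i$, so $\parsh(\mA^i)$ is the $\boxplus$-sum over $i_1+i_2=i$ of the pieces $(\mA^{ab;\qfat}_1)^{i_1}\boxtimes(\mA^{ab;\qfat}_2)^{i_2}$ (resp.\ with $\cp$). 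Reindexing the $\boxplus$-sum over $i\geq 0$ of $U^i\cp\parsh(\mA^i)$ by the pairs $(i_1,i_2)$ then identifies it with the expression found for $\mff_U(\parsh(\mA))$; the same computation applied to fully pointed pre-generators shows that $\mff_U$ restricts to a $\pco$-chain map $(\mbf,\pco)\to(\mbf[U],\pco)$ on coefficient algebras, and the extension from generators to all of $\mShat$ is immediate from \eqref{eq:propdiff1} together with the $\boxplus$- and $\boxtimes$-linearity of $\mff_U$, exactly as in the proof of Theorem~\ref{thm:filtmap}.

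It then only remains to descend to homology. Since $\mff_U$ is a chain map it induces $\mff_{U;*}\co\bh\to\bhu$, and since its restriction to coefficients is a $\pco$-chain map $\mbf\to\mbf[U]$ it induces the coefficient morphism $\cohom\to\mathfrak{f}_{U,*}$; that $\mff_{U;*}$ is compatible with $\boxplus$, $\boxtimes$ and the $\bolddot$-action over this morphism follows from the corresponding identities for $\mff_U$ -- which is a morphism of semialgebras by construction -- by the standard argument already used for $\ev_*$ in Proposition~\ref{intro:ev} and for $\mff_*$ in Theorem~\ref{thm:filtmap}. The hard part is the single genuinely geometric point embedded in the above: checking that the $U$-exponent attached to a boundary component of $\pi(\mA^i)$ inside $\mShat_U$ is indeed $i_1+i_2=i$ -- in particular, that the $U$-grading is compatible with the splitting of the non-nice fully pointed factor into $\mbf[U]$ -- which is precisely the additivity of $n_w$ under splicing and the only place $w$-respectfulness is genuinely needed; everything else is the bookkeeping already carried out for Theorem~\ref{thm:filtmap}.
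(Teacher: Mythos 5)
Your proposal is correct and follows essentially the same route as the paper: the paper's own (much terser) proof rests on exactly the two points you isolate, namely that $n_w$ is a homotopical invariant and additive under splicing (giving $\mff_U\circ\parsh=\parsh\circ\mff_U$), and that $\mff_U$ is a semialgebra morphism by construction (giving the induced $(\cohom,\mathfrak{f}_{U,*})$-morphism $\mff_{U;*}$ on homology). Your expansion of the boundary bookkeeping via the reindexing $i=i_1+i_2$, including positivity of $n_w$ from $w$-respectfulness, is just the detailed version of that argument.
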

\begin{proof} The statement that $\mff_U$ is a chain map follows from the 
fact that intersection numbers are homotopical invariants and behave 
additive under splicing. The fact, that the map $\mff_{U;*}$ is a morphism 
follows from the fact that $\mff_U$ is a morphism which in turn is true by 
its definition.
\end{proof}
\begin{theorem}\label{main2} If a map $\fraks$ from a tensor product of 
Heegaard Floer chain complexes to another Heegaard Floer chain complex 
fulfills a property $P$, then the map $\mff_{U;*}(\fraks)$ between 
the corresponding $\cfkminus$-knot Floer chain complexes fulfills the 
property $P_{\mff_U}$.
\end{theorem}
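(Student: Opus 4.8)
The plan is to mirror the proof of Theorem~\ref{main}, replacing the filtering morphism $\mff_*\co\bh\lra\bh^w$ by the $U$-equivariant morphism $\mff_{U;*}\co\bh\lra\bhu$ and the $w$-filtered symbol homology by the $U$-equivariant symbol homology $\bhu$. The preceding proposition already records that $\mff_{U;*}$ is a $(\cohom,\mathfrak{f}_{U,*})$-morphism of semialgebras whenever $\mJ_s$ is $w$-respectful, so the structural part of the argument requires no new input. First I would note that, exactly as $\mff_*$ was extended before Theorem~\ref{thm:propp}, the morphism $\mff_{U;*}$ extends to a morphism of the associated one-variable non-commutative polynomial algebras by declaring $X\lmt X$, and restricts there to the polynomials of degree at least one; this produces the assignment $P\lmt P_{\mff_U}$. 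If $\fraks\in\bh$ has property $P$, that is $P(\fraks)=0$, then since $\mff_{U;*}$ is a morphism of semialgebras we get $P_{\mff_U}(\mff_{U;*}(\fraks))=\mff_{U;*}(P(\fraks))=\mff_{U;*}(0)=0$, i.e.~$\mff_{U;*}(\fraks)$ has property $P_{\mff_U}$ in $\bhu$. This is the $U$-equivariant counterpart of Theorem~\ref{thm:propp} and needs no separate argument.

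It then remains to identify $\mff_{U;*}(\fraks)$ with a map between the corresponding $\cfkminus$-knot Floer chain complexes. For this I would carry over the evaluation morphism $\ev$ of \S\ref{sec:shafh} and the recovery Theorem~\ref{thm:alterhf} to the $U$-equivariant setting: one repeats the construction of $\ev$ with $\ztwo[U]$-coefficients, replacing each $\cfhat(\afat,\bfat)$ by $\cfkminus(\afat,\bfat)$ and weighting the contribution of a polygon $\phi$ lying in $\mM^{\mu;i}_{B'}$ by $U^{i}$, exactly as in the definition of $\partial^{\bullet,-}_\mH$ in \S\ref{knotfloerhomology}. Here the relations $(U\cp\mA)\boxplus(U\cp\mB)=U\cp(\mA\boxplus\mB)$ and $(U\cp\mA)\boxtimes\mB=U\cp(\mA\boxtimes\mB)=\mA\boxtimes(U\cp\mB)$ are precisely the $\ztwo[U]$-linearity of addition and of composition of $\ztwo[U]$-module maps, so the verification that the resulting evaluation map is a morphism of semialgebras which vanishes on $\parsh$-boundaries is word-for-word the arguments of \S\ref{sec:shafh} together with Proposition~\ref{intro:ev}, and the analogue of Theorem~\ref{thm:alterhf} is obtained as there. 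Under this dictionary a property of a map between $\cfkminus$-complexes corresponds to a polynomial relation in $\bhu$, property $P$ of the original map corresponds to $P(\fraks)=0$, and $\mff_{U;*}(\fraks)$ evaluates to the asserted map, which by the first paragraph fulfills $P_{\mff_U}$.

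The only step needing genuine care — and the main obstacle — is that $\mff_U$ be a \emph{chain} map, i.e.~that restricting a moduli space $A$ to $A^i=\{\phi\in A\mid n_w(\phi)=i\}$ commutes with passing to codimension-$1$ boundaries. This is exactly where $w$-respectfulness enters: it makes every intersection of a $\mJ_s$-holomorphic polygon with $V_w=\{w\}\times\symgmo$ positive, so that $n_w$ is an honest nonnegative count which is additive under degeneration, and the argument is the same as for $\mff$ in the proof of Theorem~\ref{thm:filtmap}. Since this was already handled in the proposition immediately preceding the statement, the remainder of the proof is purely formal.
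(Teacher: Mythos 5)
Your proposal is correct and follows essentially the same route as the paper: the paper's proof of Theorem~\ref{main2} simply says it runs along the same lines as Theorem~\ref{main}, i.e.~it combines the semialgebra-morphism property of $\mff_{U;*}$ (the proposition preceding the statement) with the formal transfer of polynomial properties as in Theorem~\ref{thm:propp} and the $U$-equivariant analogues of $\ev$ and Theorem~\ref{thm:alterhf}, which the paper explicitly notes exist without writing them out. Your write-up fleshes out exactly these steps, including the $w$-respectfulness point already settled in that proposition, so no further argument is needed.
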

\begin{proof}
The proof of this theorem 
follows the same lines as the proof of Theorem~\ref{main}.
\end{proof}
Although we did not write this down explicitly, there is 
a $U$-equivariant version of Theorem~\ref{thm:alterhf} (and 
Theorem~\ref{thm:altercohf}) giving a model for $\hfkminus$ in terms of 
the symbol homology theory.

\section{Perturbed Symbol Homologies}\label{sec:psh}
In this section we briefly sketch a necessary extension of the 
symbol homology theories. Observe that the isomorphisms between Floer
theories that are induced by --~for instance~-- isotopies or perturbations 
of the path of almost complex structures use dynamic boundary conditions.
Hence, these maps cannot be presented as elements in the symbol homology
we defined in \S\ref{sec:tshp}. However, we can extend the symbol homology
theory by including moduli spaces with dynamic boundary conditions to the
set of generators. The constructions
given in \S\ref{sec:tshp} and \S\ref{sec:fsh} then carry over 
verbatim. Because of the similarity of 
these approaches, we will just specify which moduli spaces we have 
to include into our considerations.\vspace{0.3cm}\\
\subsection{Perturbations of the Almost Complex Structure}\label{sec:potacs}
Given a perturbation $\mJ_{s,t}$ of the path $\mJ_{s,0}$, we additionally
fix a homotopy $\mJ_{s,t}(\tau)$ where $\mJ_{s,t}(0)=\mJ_{s,t}*\mJ_{s,1-t}$ 
and $\mJ_{s,t}(1)=\mJ_{s,0}$. For a given set of attaching circles $B$ we
consider the following moduli spaces.
\begin{enumerate}
\item[(1)] For every $B'\in\mI_B$ we consider $\mM_{B';\mJ_{s,0}}^\mu$ with
restrictions to the Maslov-index as given in part (1) of 
Definition~\ref{def:generators}.
\item[(2)] For every $B'\in\mI_B$, we consider $\mM_{B';\mJ_{s,1}}^\mu$ with
restrictions to the Maslov-index as given in part (1) of 
Definition~\ref{def:generators}.
\item[(3)] For every element $B'\in\mI_B$, we consider 
$\mM_{B';\mJ_{s,t}}^{i}$, for $i=0,1$, which is the set of 
$\mJ_{s,t}$-holomorphic Whitney polygons with boundary 
conditions specified by $B'$ with Maslov index $i$ 
(see~\cite{OsZa01} or cf.~\cite{Saha02}). Furthermore, we consider
$\mM_{B';\mJ_{s,1-t}}^{i}$, $i=-1,0$, and 
\[
 \mM_{B';\tau}^{i}
 =
 \bigcup_{\tau\in[0,1]}
 \mM_{B';\mJ_{s,t}(\tau)}^{i}
\]
for $i=-1,0$. 
\end{enumerate}
The constructions given in \S\ref{sec:tshp} carry over verbatim 
to provide a symbol homology theory with the following slight 
adaptions: We need to define $\parsh$ for the generators we obtain 
from (3). The algorithm presented in \S\ref{thediff} applies here, as 
well, to provide a definition of $\parsh$ for all generators given by 
moduli spaces that come from (1), (2) and the spaces 
$\mM_{B';\mJ_{s,t}}^{i}$. For $\mM_{B';\tau}^{0}$ the following 
algorithm applies: There are two types of ends, the broken ends and 
the ends coming from $\tau\to0/1$ which are $\mM_{B';\mJ_{s,t}(0)}^{0}$ 
and $\mM_{B';\mJ_{s,t}(1)}^{0}$. To the first type, i.e.~the broken 
ends, the algorithm from \S\ref{thediff} applies. To the second type 
we apply the following procedure: There is a canonical one-to-one 
correspondence between the vertices of $\mM_{B';\tau}^{0}$ and the 
vertices of $\mM_{B';\mJ_{s,t}(0)}^{0}$ (or $\mM_{B';\mJ_{s,t}(1)}^{0}$). 
Hence, we may take the decorations of the vertices of 
$\mM_{B';\tau}^{0}$ and attach them to the vertices of 
$\mM_{B';\mJ_{s,t}(0)}^{0}$ (and $\mM_{B';\mJ_{s,t}(1)}^{0}$). 
Analogously, we get a definition of $\partial_\mbf$ for fully pointed 
pre-generators coming from moduli spaces of (3). 
\begin{example}
For $\yfat\in\talpha\cap\tbeta$ we have
\begin{eqnarray*}
 \parsh
 \Bigl(
 \mM_{(\afat,\bfat);\tau}^{0}
(\,\overset{\downarrow}{\cdot},\overset{\uparrow}{\yfat})
 \Bigr)
 &=&
 \mM_{(\afat,\bfat);\mJ_{s,t}(0)}^0
 (\,\overset{\downarrow}{\cdot},\overset{\uparrow}{\yfat})
 \boxplus
 \mM_{(\afat,\bfat);\mJ_{s,t}(1)}^0
 (\,\overset{\downarrow}{\cdot},\overset{\uparrow}{\yfat})
 \\
 &\boxplus&
 \bigboxplus{\qfat\in\talpha\cap\tbeta}
 {\mM_{(\afat,\bfat);\mJ_{s,t}(\tau)}^{0}
 (\,\overset{\downarrow}{\cdot},\overset{\uparrow}{\qfat})
 \boxtimes
 \widehat{\mM}_{(\afat,\bfat);\mJ_{s,t}(1)}^1
 (\,\overset{\downarrow}{\cdot},\overset{\uparrow}{\yfat})}
 \\
 &\boxplus&
 \bigboxplus{\qfat\in\talpha\cap\tbeta}
 {
 \widehat{\mM}_{(\afat,\bfat);\mJ_{s,t}(0)}^1
 (\,\overset{\downarrow}{\cdot},\overset{\uparrow}{\qfat})
 \boxtimes
 \mM_{(\afat,\bfat);\mJ_{s,t}(\tau)}^0
 (\,\overset{\downarrow}{\cdot},\overset{\uparrow}{\yfat})} 
\end{eqnarray*}
For the notation of the decorations we point the reader to \S\ref{sec:eai}.
The boundaries in lines $2$ and $3$ are given by the algorithm 
presented in \S\ref{thediff}. The first line is defined by the 
procedure presented above: we move the data from 
$\mM_{(\afat,\bfat);\tau}^{0}
(\,\overset{\downarrow}{\cdot},\overset{\uparrow}{\yfat})$ to 
its boundary components.
\end{example}

\subsection{Isotopies}\label{sec:i}
This is done precisely the same way as the case of perturbations of 
the almost complex structure. For a given Hamiltonian isotopy $\Phi$ 
we include the moduli spaces with dynamic boundary conditions into 
the theory.

\section{Implications I -- Knot Cobordisms}\label{sec:knotcob}
In this section we will give the construction of cobordism 
maps in knot Floer homologies for $\hfkhat$, $\hfkminus$ and $\hfkinfty$ 
for simplicity, i.e.~for theories in which the intersection 
condition $n_z=0$ is imposed and $n_w$ is arbitrary. In 
\S\ref{sec:okft} we sketch the adaptions that have to be made 
for theories for which $n_z$ is arbitrary. In the following, we will 
work with diagrams which are weakly admissible, where weak 
admissibility is defined with respect to the 
point $z$ (see~\cite[Definition~4.10]{OsZa01}). We will 
write $\hfkoc$ to indicate, that we consider 
$\hfkhat=\mbox{\rm HFK}^{\bullet,\bullet}$ or 
$\hfkminus$ (or even $\hfkinfty$).\vspace{0.3cm}\\
We expect the reader to be familiar with the
work \cite{OsZa03} of Ozsv\'{a}th and Szab\'{o}. The idea to associate to 
a cobordism $W$ between two $3$-manifolds $Y$ and $Y'$ a map between 
the associated Floer homologies is similar to the constructions in 
\cite{OsZa03} and goes as follows: First observe that the cobordism
$W$ admits a handle decomposition relative to the boundary component 
$Y$ with no $0$-handles and no $4$-handles (cf.~\cite{GoSt}). To use 
the notation of 
\cite{GoSt}, the boundary $\partial W$ can be written as 
$\partial_+W\sqcup\overline{\partial_-W}$, where one of these components 
might be empty. In our case, $\partial_-W=Y$ and $\partial_+W=Y'$. Since both 
components are non-empty, we do not require $0$-handles and $4$-handles 
(see~\cite[Proposition~4.2.13]{GoSt}). Furthermore, we may think the handles to 
be attached in order of increasing index and the handles of the same index to be 
attached simultaneously (see~\cite[Proposition~4.2.7]{GoSt}). 
Thus, we may split up $W$ as
\[
 W=W_1
 \cupb
 W_2
 \cupb
 W_3
\]
where $W_i$ is built by the handles of index $i$. To associate a 
map $\Foc_{W}$ to $W$ we will choose a splitting of $W$ into 
handles and then associate to $W_i$ a map between the Floer 
homologies of the boundary components. Then, $\Foc_{W}$ will 
be the composition of these three maps. The maps defined, here, 
will be defined similarly as Oszv\'{a}th and Szab\'{o} do it in 
their paper.
\begin{definition} Let $Y$, $Y'$ be closed, oriented $3$-manifolds 
with knots $K\subset Y$ and $K'\subset Y'$. A cobordism $W$ 
between $(Y,K)$ and $(Y',K')$ is a pair $(N,\phi)$ where $N$ is 
a four-manifold with boundary $\partial N=-Y\sqcup Y'$ and $\phi$ 
is a proper embedding of the cylinder $[0,1]\times\sone$ into $N$ 
which maps its boundary to $K\sqcup K'$. We call $W$ 
a {\bf knot cobordism} from $(Y,K)$ to $(Y',K')$.
\end{definition}
For example, such a cobordism is given by attaching a $2$-handle 
$\sh^{(4,2)}$ in the complement of the knot $K$. The cobordism 
$N=[0,1]\times Y\cupb\sh^{(4,2)}$ admits a canonical embedding $\phi$ 
of the cylinder into $N$, i.e.~the embedding is given by the 
canonical inclusion
\[
 [0,1]\times K
 \hookrightarrow
 N
\]
and we define $K'=\{1\}\times K$.
\begin{definition} We say that two knot cobordisms $W$ and $W'$ are {\bf 
equivalent} if there is a diffeomorphism $\psi\co N\lra N'$ such that 
$\phi'=\psi\circ\phi$.
\end{definition}
\begin{definition} A handle decomposition of a knot cobordism $W=(N,\phi)$ 
between $(Y,K)$ and $(Y',K')$ is a handle decomposition $\mathcal{H}$ 
of $N$ relative to $Y$ using handles whose attaching spheres are all 
disjoint from $K$.
\end{definition}
Observe that the embedding $\phi$ can be extended to an 
embedding $\psi$ of $[0,1]\times\nu K$ into $N$ which again is disjoint 
from the handles. 
Thus, a handle decomposition of a knot cobordism $N$ can be defined 
equivalently as a handle decomposition of the 
manifold $\overline{N\backslash([0,1]\times\nu K)}$ relative
to $Y\backslash\nu K$, which we regard as a cobordism 
between $Y\backslash\nu K$ and $Y'\backslash\nu K'$. Observe, that our 
notion of knot cobordism is equivalent to the notion of 
{\it special cobordisms} of Juh\'{a}sz given in \cite{JuCOB}.

\begin{theorem}\label{handle} Let $\mathcal{H}_1$ and $\mathcal{H}_2$ be 
two handle decompositions of a knot cobordism $N$ between $(Y,K)$ 
and $(Y',K')$ then we can transform $\mathcal{H}_1$ into $\mathcal{H}_2$ 
through handle decompositions of knot cobordisms using a finite sequence 
of handle slides, handle pair creation/annihilation and isotopies within 
levels.
\end{theorem}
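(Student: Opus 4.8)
The plan is to reduce the statement to the standard Cerf-theoretic uniqueness of handle decompositions for cobordisms, carried out in the relative setting where everything stays disjoint from the knot. First I would use the observation recorded just before the statement: a handle decomposition $\mathcal{H}$ of the knot cobordism $N=(N,\phi)$ is the same data as a handle decomposition of the cobordism $N_K:=\overline{N\backslash([0,1]\times\nu K)}$ from $Y\backslash\nu K$ to $Y'\backslash\nu K'$, since $\phi$ extends to an embedding $\psi$ of $[0,1]\times\nu K$ disjoint from all handles. So $\mathcal{H}_1$ and $\mathcal{H}_2$ correspond to two honest handle decompositions $\mathcal{H}_1^K$ and $\mathcal{H}_2^K$ of the \emph{same} cobordism $N_K$ rel $Y\backslash\nu K$.

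Next I would invoke the classical fact (Cerf theory; see e.g.\ \cite{GoSt}, whose conventions we already use) that any two handle decompositions of a fixed compact cobordism, rel one boundary component, are connected by a finite sequence of handle slides, handle creation/annihilation pairs, and isotopies of the attaching spheres/maps through levels. Apply this to $N_K$ to obtain a finite sequence
\[
 \mathcal{H}_1^K=\mathcal{G}_0\;\rightsquigarrow\;\mathcal{G}_1\;\rightsquigarrow\;\dots\;\rightsquigarrow\;\mathcal{G}_m=\mathcal{H}_2^K
\]
of such moves. The point to verify is that each intermediate $\mathcal{G}_j$, and each move between consecutive ones, can be performed with attaching spheres disjoint from (a neighborhood of) the torus $\partial([0,1]\times\nu K)$, so that $\mathcal{G}_j$ pulls back to a genuine handle decomposition of the knot cobordism $N$ (i.e.\ attaching spheres disjoint from $K$), and the moves pull back to the three allowed moves on knot cobordisms together with isotopies within levels. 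For handle creation/annihilation this is automatic since the cancelling pair can be introduced in any chosen chart away from $\nu K$. For handle slides and for isotopies of attaching spheres through a level $Y_t\backslash\nu K$, one must keep the relevant spheres in the knot complement; since the knot complement is a codimension-$0$ submanifold with nonempty boundary and attaching spheres/arcs have codimension $\ge 2$ in the level sets, a generic perturbation (relative to the part of the sphere already away from $\nu K$) removes intersections with the solid torus $\partial_t([0,1]\times\nu K)$, and these perturbations are themselves isotopies within levels. Collating: the pulled-back sequence realizes the transformation of $\mathcal{H}_1$ into $\mathcal{H}_2$ through handle decompositions of knot cobordisms via handle slides, handle pair creation/annihilation, and isotopies within levels, which is exactly the assertion.

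The main obstacle I anticipate is precisely this genericity/disjointness bookkeeping: one has to be careful that \emph{every} intermediate object in the Cerf sequence — not just the endpoints — remains a \emph{knot} handle decomposition, and that the isotopies used to achieve disjointness do not themselves push an attaching sphere across the knot. The cleanest way to handle this is to do the whole Cerf argument from the start inside the manifold-with-corners $N_K$ rel $(Y\backslash\nu K)\cup(\text{lateral boundary }[0,1]\times\partial\nu K)$, i.e.\ treat $[0,1]\times\partial\nu K$ as a fixed sutured-type boundary that all handle moves avoid by construction; then no after-the-fact perturbation is needed and the pull-back is literal. One should also note the harmless normalization (again from the discussion preceding the statement, via \cite[Prop.~4.2.7 and 4.2.13]{GoSt}) that since $\partial_\pm W$ are both nonempty we may omit $0$- and $4$-handles and take handles of equal index simultaneously, so the sequence stays within the stated class of decompositions.
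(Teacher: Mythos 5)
Your main argument has a genuine gap at the disjointness step. Passing to $N_K=\overline{N\backslash([0,1]\times\nu K)}$ and running ordinary Cerf theory rel $Y\backslash\nu K$ alone, and then fixing things by ``generic perturbation,'' does not work as stated: staying out of the solid torus $\nu K_t$ is a codimension-$0$ problem, not a transversality problem; for $3$-handles the attaching sphere is a $2$-sphere inside a $3$-dimensional level, so it meets the level knot in isolated points that general position cannot remove (your ``codimension $\ge 2$'' count fails here); and during a handle slide or an isotopy within a level the attaching circles sweep out a $2$-dimensional trace in the $3$-dimensional level, which generically crosses the $1$-dimensional knot at isolated parameter values. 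At those instants the intermediate object is not a handle decomposition of the knot cobordism, and eliminating such crossings requires a rerouting argument, not a small perturbation rel the part already away from $\nu K$.

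The remedy you mention only in passing --- carrying out the entire Cerf-theoretic argument rel the lateral boundary $[0,1]\times\partial\nu K$, so that disjointness holds by construction --- is the correct one, but that relative statement (uniqueness of handle decompositions of a cobordism with a fixed product piece of boundary) is precisely the nontrivial input; it is not the off-the-shelf uniqueness result quoted from \cite{GoSt}. This is in fact how the paper argues: it observes that a knot cobordism is a special cobordism in the sense of Juh\'asz and invokes the results of \cite{JuCOB}, where handle decompositions of such cobordisms rel the fixed lateral boundary are shown to be related by exactly the stated moves. Your proposal becomes correct once you promote that ``cleanest way'' aside from a remark to the actual proof, citing \cite{JuCOB} (or reproving its relative Cerf theory) rather than relying on after-the-fact genericity.
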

\begin{proof} This follows from the observation that a knot cobordism
is especially a special cobordism of Juh\'{a}sz and the results from
\cite{JuCOB}.
\end{proof}

\subsection{One-handles} Suppose we are given a closed, oriented 
$3$-manifold $Y$ with knot $K$ in it. Attach to the trivial 
cobordism $[0,1]\times Y$ a $4$-dimensional $1$-handle to the boundary
$\{1\}\times Y$, where the attaching spheres of the $1$-handles should 
be attached in the complement of $K$ in $Y$. Denote the resulting 
cobordism by $U$. The boundary of the cobordism $U$ is given as 
$\partial U=-Y\sqcup Y\#\stwo\times\sone$. Observe 
that $[0,1]\times K$ admits a natural embedding into $U$ 
with $\{1\}\times K$ being mapped into $Y\#(\stwo\times\sone)$. Let
 $(\Sigma,\afat,\bfat,w,z)$ be a Heegaard diagram adapted to the 
pair $(Y,K)$ and let $(E,\afat_0,\bfat_0,z_0)$ be a 
standard Heegaard diagram for $\stwo\times\sone$ 
(see~\cite[Definition~2.8]{OsZa03}) so that the $\alpha$-circle in $\afat_0$ 
and the $\beta$-circle in $\bfat_0$ meet in a single pair of 
intersection points. Denote by $\mE$ the diagram $(\Sigma,\afat_0,\bfat_0,w_0,z_0)$
where $w_0$ is a point in $\Sigma\backslash(\afat_0\cup\bfat_0)$ which lies
in the same component as $z_0$. Furthermore, denote by 
$\theta$ the intersection point with higher relative grading. By 
\cite[Corollary~6.8]{Saha}, we know that 
$(\Sigma',\afat',\bfat',w,z')=(\Sigma,\afat,\bfat,w,z)\#\mE$ is 
a Heegaard diagram adapted to $K$ and (the arguments given in 
Corollary~6.8 carry over verbatim for the $\hfkminus$-case)
\begin{equation}
\begin{array}{ccl}
  \hfkoc(Y\#\stwo\times\sone,K)
  &\cong&
  H_*(\cfkoc(\Sigma,\afat,\bfat,w,z)
  \otimes_{\ztwo[U]}
  \cfkoc(\mE))\\
  &\cong&
  \hfkoc(Y,K)\otimes_{\ztwo[U]}\hfkoc(\stwo\times\sone,U)
\end{array}
\label{eq:consum}
\end{equation}
Thus, we define a map
\[
 \gund_{U,\fraks}
 \co
 \cfkoc(\afat,\bfat;\fraks)
 \lra
 \cfkoc(\afat',\bfat';\fraks\#\frakso)
\]
by sending an element $U^i\cdot\xfat$, with $\xfat\in\talpha\cap\tbeta$, 
to $U^i\cdot\xfat\otimes\theta$. The moduli spaces in the definition of the 
differential on the right split as in the case 
of the $\hfhat$-theory (see~\cite[Corollary~6.8]{Saha} for the hat-theory 
and \cite[Theorem~1.5]{OsZa02}) and, thus, the map 
is chain. Denote by $\Ghatund_{U,\fraks}$ the induced map between the 
knot Floer homologies.

\subsection{Three-handles} Let $Y$ be a closed, oriented $3$-manifold 
and $K\subset Y$ a knot in $Y$. Suppose $V$ is a cobordism
obtained by adding a single three-handle along a non-separating 
$2$-sphere in $Y$ which is disjoint from $K$. The boundary components 
of $V$ are $(Y,K)$ and $(Y',K')$ with $Y'=Y\#\stwo\times\sone$ 
and $K'$ sitting inside $Y$. We would like to remind the reader 
of \cite[Lemma~4.11]{OsZa03}: In this situation we can find a
Heegaard diagram
\[
 \mH'=(\Sigma',\afat',\bfat',z')
 =
 \mH
 \#
 \mE
\]
of $Y'$ where $\mE$ is a standard Heegaard diagram for 
$\stwo\times\sone$ and $\mH=(\Sigma,\afat,\bfat,z)$ a Heegaard 
diagram for $Y$. Furthermore, if two such split diagrams 
$\mH'_i=\mH_i\#\mE$, for $i=1,2$, are equivalent, then 
$\mH_1$ and $\mH_2$ are equivalent.\vspace{0.3cm}\\
It is easy to find $\mH'$ which is adapted to the knot $K'$ and 
still splits into $\mH$ and $\mE$. Furthermore, observe that $\mH$ 
is a Heegaard diagram of $Y$ adapted to $K$. Thus, we may 
proceed as done in \cite{OsZa03}: We define a map
\[
 \eund_{V,\fraks}
 \co
 \cfkoc(\mH',\left.\fraks\right|_{Y'})
 \lra
 \cfkoc(\mH,\left.\fraks\right|_{Y})
\]
by sending an element $U^i\cdot\xfat\otimes\yfat$ with 
$\xfat\otimes\yfat\in\talphaprime\cap\tbetaprime$ to
$U^i\cdot\xfat$ if $\yfat$ is the minimal intersection point 
of $\stwo\times\sone$ and to 
zero otherwise. As in the case of $1$-handles, the moduli spaces 
of Whitney disks of $\cfkoc(\mH')$ split (see Corollary~6.8 
of \cite{Saha}) and, thus, $\eund_{V,\fraks}$ is a chain map. 
Hence, we get
\[
 \Ehatund_{V,\fraks}
 \co
 \hfkoc(Y',K';\left.\fraks\right|_{Y'})
 \lra
 \hfkoc(Y,K;\left.\fraks\right|_{Y}).
\]
Observe that the definition of the maps $\Ehatund$ and $\Ghatund$ do 
not use the base point $w$. 

\subsection{Two-handles} Suppose we are given a closed, oriented $3$-manifold $Y$
and a knot $K\subset Y$. Furthermore, let $\link$ be a framed link in $Y$ which
is disjoint from $K$. We call such a link {\bf admissible}.
Analogous to the case of knots it is possible to find a Heegaard diagram {\bf 
subordinate} to the link $K\sqcup\link$ (see~\cite{OsZa04} or \cite{Saha,Saha02}).
To describe such a diagram, let $\link_1,\dots,\link_k$ be the 
components of $\link$. Then, there is a Heegaard diagram $(\Sigma,\afat,\bfat,w,z)$ 
of $Y$ such that $K$ intersects $\beta_1$ once, transversely and is disjoint from 
the other $\bfat$-circles and for $i=1,\dots,k$ the knot $\link_i$ intersects 
$\beta_{i+1}$, transversely and is disjoint from the other $\bfat$-circles. 
The pair 
$(w,z)$ determines the knot $K$ in the usual way, i.e.~in the way introduced by
Ozsv\'{a}th and Szab\'{o} in \cite{OsZa04}. The 
diagram is characterized by the property that it comes from a handle decomposition 
(see \cite{GoSt} for a definition of relative handle decompositions) of 
$Y\backslash\nu(K\sqcup\link)$ relative to 
$\partial(\nu(K\sqcup\link))$ (cf.~\cite{Saha,Saha02}). Using the Kirby 
calculus picture behind these subordinate diagrams as explained for instance 
in \cite{Saha, Saha02}, it is easy to see the following.
\begin{prop}\label{prop:bouquet} Let $Y$ be a closed, oriented $3$-manifold 
and $K\subset Y$ a
knot. Let $\link$ be a framed link and 
$\mH_i=(\Sigma_i,\afat_i,\bfat_i,w,z)$ be two diagrams subordinate to the 
link $K\sqcup\link$. Denote by $I$ the interval inside $K$ connecting $z$ 
with $w$, interpreted as sitting in $\Sigma$. Then these diagrams are 
isomorphic after a finite sequence of the following moves:
\begin{enumerate}
 \item[($m_1$)] Handle slides and isotopies among the $\afat$-curves. 
 These isotopies may not cross $I$.
 \item[($m_2$)] Handle slides and isotopies among the 
 $\beta_{k+2},\dots,\beta_g$. These isotopies may not cross $I$.
 \item[($m_3$)] Handle slides of one of the $\beta_i$, $i\leq k+1$ over 
 one of the $\beta_j$, $j\geq k+2$. These isotopies may not cross $I$.
 \item[($m_4$)] Stabilizations/destabilizations.
\end{enumerate}
\end{prop}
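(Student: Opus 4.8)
The plan is to translate the statement into one about relative handle decompositions of the link complement and then read off the allowed moves from classical relative handle calculus, much as in the proof of Theorem~\ref{handle}. First I would make precise the dictionary behind a subordinate diagram, following the Kirby-calculus model of \cite{Saha,Saha02}: given $\mH_i=(\Sigma_i,\afat_i,\bfat_i,w,z)$ subordinate to $K\sqcup\link$, the curves $\afat_i$ together with $\beta^{(i)}_{k+2},\dots,\beta^{(i)}_{g_i}$ record a handle decomposition of the complement $Y\backslash\nu(K\sqcup\link)$ relative to its boundary tori, the remaining curves $\beta^{(i)}_1,\dots,\beta^{(i)}_{k+1}$ are the meridians of $K,\link_1,\dots,\link_k$ — appearing on the Heegaard surface as the belt circles of the corresponding $2$-handles in the dual surgery picture — and the arc $I\subset K$ joining $z$ to $w$ through $\beta^{(i)}_1$ records how $K$ itself sits relative to that decomposition. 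In particular $\mH_1$ and $\mH_2$ present two relative handle decompositions of one and the same manifold $Y\backslash\nu(K\sqcup\link)$, carrying the same meridional/surgery data.

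Next I would invoke the relative Cerf theory underpinning Theorem~\ref{handle} (equivalently the handle moves of Juh\'{a}sz \cite{JuCOB}, or the relative Kirby calculus of \cite{GoSt}): two such handle decompositions are joined by a finite sequence of handle slides, handle pair creations/annihilations, and isotopies within levels. Because every handle occurring in a decomposition subordinate to $K\sqcup\link$ has attaching sphere disjoint from $K\sqcup\link$, these moves can be chosen supported away from $K$. Reading off the effect of each elementary move on the Heegaard surface then gives: a slide or isotopy among the handles recorded by the $\afat$-curves is move $(m_1)$; one among the handles recorded by the $\beta_j$ with $j\ge k+2$ is move $(m_2)$; and a handle pair creation or annihilation is a stabilization or destabilization $(m_4)$. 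Since all of these are supported away from $K$, none of the attaching-sphere isotopies crosses the arc $I$, which is exactly the non-crossing restriction imposed in $(m_1)$, $(m_2)$ and $(m_3)$.

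The step I expect to need genuine care is the bookkeeping of the meridians $\beta^{(i)}_1,\dots,\beta^{(i)}_{k+1}$: one must check that the extra data of a subordinate diagram contributes no ambiguity beyond move $(m_3)$. A change of handle decomposition of the complement fixes each meridian up to isotopy in the complement, but to keep it an embedded curve meeting the new $\bfat$-system transversely one may have to push it across some of the handles $\beta_j$ with $j\ge k+2$; on the diagram this is precisely a slide of type $(m_3)$. What must then be argued is that one never needs the reverse operation — sliding a $\beta_j$, $j\ge k+2$, over a meridian, or one meridian over another — nor any nontrivial repositioning of $I$. This is where the explicit surgery-diagram picture of \cite{Saha,Saha02} is used: the framed circles carrying $K$ and the $\link_i$ are untouched by all of the moves above, so their belt circles (the $\beta^{(i)}_{\le k+1}$) and the arc $I$ are altered only by ambient isotopy rel those circles, which on the Heegaard side is generated by $(m_3)$ together with a reparametrization of $I$. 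Assembling these observations produces the claimed list $(m_1)$--$(m_4)$.
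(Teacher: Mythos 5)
Your argument is essentially the route the paper intends: its proof is a one-line citation to a straightforward adaptation of \cite[Lemma~2.3]{Saha}, and that lemma is proved exactly by the relative handle-decomposition/Cerf-theory argument for the complement of $K\sqcup\link$ that you sketch, including the bookkeeping of the meridional $\beta$-curves and the arc $I$ via slides of type $(m_3)$. So your proposal is correct and takes essentially the same approach as the paper.
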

This proposition can be proved with a straightforward adaption 
of the proof of \cite[Lemma~2.3]{Saha}.
Given a subordinate diagram we may deduce a Heegaard triple 
$(\Sigma,\afat,\bfat,\gfat,w,z)$ with the following properties: 
$\gamma_1$ and $\gamma_{k+2},\dots,\gamma_g$ are small isotopic translates 
of $\beta_1$ and $\beta_{k+2},\dots,\beta_g$. In fact, every pair 
$\beta_i$, $\gamma_i$ meet in a canceling pair of intersection points. 
The curve $\gamma_i$, for $i=2,\dots,k+1$, is determined by the framing 
of $\link_{i-1}$. Recall from \cite[\S8.1.]{OsZa01} (or \cite[\S4.1.]{OsZa03}) 
that the triple diagram $(\Sigma,\afat,\bfat,\gfat)$ determines a 
cobordism $X_{\afat,\bfat,\gfat}$ with three boundary components 
denoted by $Y_{\afat,\bfat}$, $Y_{\bfat,\gfat}$ and $Y_{\afat,\gfat}$.
For $\xfat\in\talpha\cap\tbeta$, 
$\yfat\in\talpha\cap\tbeta$ and $\fraks\in\spinc(X_{\afat,\bfat,\gfat})$ 
we define
\[
 \foc_\fraks(\xfat\otimes\yfat)
 =
 \sum_{\qfat\in\talpha\cap\tgamma,i\geq0}
 \#\bigl(
 \mM_{(\afat,\gfat,\bfat);\fraks}^{0;i}(\xfat,\yfat,\qfat)
 \bigr)\cdot U^i\thinspace\qfat,
\]
where $\mM_{(\afat,\gfat,\bfat);\fraks}^{0;i}(\xfat,\yfat,\qfat)$ denotes
the moduli space of holomorphic Whitney triangles $\phi$ which connect 
$\xfat$, $\yfat$ and $\qfat$ with boundary conditions in $\talpha$, 
$\tgamma$ and $\tbeta$ such that $n_w(\phi)=i$ and $\fraks_z(\phi)=\fraks$. 
We extend this to a bilinear pairing
\[
 \foc_\fraks
 \co
 \cfkoc(\afat,\bfat,w,z;\fraks_{\afat\bfat})
 \otimes
 \cfkoc(\bfat,\gfat,w,z;\fraks_{\bfat\gfat})
 \lra
 \cfkoc(\afat,\gfat,w,z;\fraks_{\afat\gfat})
\]
where $\fraks_{\afat\bfat}=\left.\fraks\right|_{Y_{\afat,\bfat}}$, 
$\fraks_{\bfat\gfat}=\left.\fraks\right|_{Y_{\bfat,\gfat}}$ and
$\fraks_{\afat\gfat}=\left.\fraks\right|_{Y_{\afat,\gfat}}$.
\begin{rem} Our notation of the boundary conditions at the edges
of the triangle differs from the notation introduced by Ozsv\'{a}th
and Szab\'{o}. We adapted the notation to the conventions introduced
at the beginning of \S\ref{sec:wpash}.
\end{rem}

\begin{lem} The Heegaard diagram $(\Sigma,\bfat,\gfat,w,z)$ is subordinate
to the unknot $U$ in the manifold $\#^k(\stwo\times\sone)$.
\end{lem}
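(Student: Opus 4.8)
The plan is to verify one by one the conditions that make $(\Sigma,\bfat,\gfat,w,z)$ a Heegaard diagram subordinate to the unknot (with empty auxiliary link) inside $\#^k(\stwo\times\sone)$; since there is no auxiliary link, being subordinate to a knot here simply means being a doubly-pointed diagram adapted to it in the sense of Definition~\ref{knotdiagram}, so what has to be checked is that $(\Sigma,\bfat,\gfat)$ is a Heegaard diagram of $\#^k(\stwo\times\sone)$, that the pair $(w,z)$ encodes the unknot, and that the diagram is adapted to that unknot.

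First I would invoke the standard identification $Y_{\bfat,\gfat}\cong\#^k(\stwo\times\sone)$ underlying the surgery exact triangle: by the construction of the triple diagram recalled above (cf.~\cite[\S8.1]{OsZa01}, \cite[\S4.1]{OsZa03}, and the Kirby pictures of \cite{Saha,Saha02}) the $g-k$ pairs $(\beta_j,\gamma_j)$ with $j\in\{1\}\cup\{k+2,\dots,g\}$ consist of small Hamiltonian translates meeting in exactly two points, hence one may destabilize along all of them, and the resulting genus-$k$ diagram is a standard diagram for $\#^k(\stwo\times\sone)$.

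Next I would locate the doubly-pointed structure. Let $I\subset K$ be the interval used to build the subordinate diagram for $K\sqcup\link$, so that $I$ contains the single point $K\cap\beta_1$ and $\partial I=z-w$. After the small isotopy that turns $\gamma_1$ into a parallel push-off of $\beta_1$ to the side not containing $w$, the arc $I$ crosses $\gamma_1$ exactly once, and $I$ is disjoint from the other $\gfat$-curves: it misses $\gamma_2,\dots,\gamma_{k+1}$ because $K$ is disjoint from $\link$ while for $i=1,\dots,k$ the curve $\gamma_{i+1}$ lies in a small neighbourhood of $\link_i$, and it misses $\gamma_{k+2},\dots,\gamma_g$ because these are small translates of $\beta$-curves that $I$ avoids. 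Thus $I$ is an admissible choice of the arc crossing $\gamma_1$ once that is used to read the knot off $(\Sigma,\bfat,\gfat,w,z)$, the resulting knot $U$ is isotopic to a curve on $\Sigma$ meeting $\gamma_1$ once transversely and disjoint from the remaining $\gfat$-curves, and in particular $(\Sigma,\bfat,\gfat,w,z)$ is adapted to $U$.

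Finally I would show that $U$ is the unknot. Writing $U=I\cup\eta$ and isotoping the closing arc $\eta$, away from the crossing point, to lie in $\Sigma\backslash\gfat$ --- permissible since that complement is connected --- one sees that $(\beta_1,\gamma_1)$ is a canceling pair contained in a stabilization region whose two basepoints $z,w$ lie on opposite sides of $\gamma_1$, so destabilizing that pair turns $(\Sigma,\bfat,\gfat,w,z)$ into a doubly-pointed diagram in which $U$ has been isotoped off the entire diagram; hence $U$ bounds an embedded disc and is unknotted. The part I expect to require the most care is exactly this last step: rather than the bare homological statement, one needs the precise ``destabilization in the presence of basepoints'' move, and the most transparent way to carry it out is to track $U$ through the Kirby-calculus picture of \cite{Saha,Saha02}, where the component carrying $(w,z)$ appears as a $0$-framed unknot disjoint from all the handles --- exactly as in the proof of Proposition~\ref{prop:bouquet}.
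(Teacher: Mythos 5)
There are two genuine problems here, and both come from treating pairs of curves that meet in \emph{two} points as destabilizing pairs. In the triple constructed above, $\gamma_1$ and $\gamma_{k+2},\dots,\gamma_g$ are small isotopic translates of the corresponding $\beta$-curves, meeting them in a canceling pair of \emph{intersection points}; such a pair is not a canceling handle pair, and one cannot destabilize along it (destabilization needs a single transverse intersection point, with the region disjoint from all other curves and from the basepoints). The pairs that can be canceled are the framing pairs $(\beta_i,\gamma_i)$, $i=2,\dots,k+1$, which meet once; it is precisely the translate pairs that survive and produce the $\stwo\times\sone$-summands. So your first step destabilizes the wrong pairs (an illegal move) and then misreads the outcome: a diagram in which every remaining $\beta_i$ meets $\gamma_i$ exactly once describes $S^3$, not a connected sum of copies of $\stwo\times\sone$. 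The same illegal move reappears at the decisive point of your argument: $(\beta_1,\gamma_1)$ is not ``a canceling pair contained in a stabilization region'', so $U$ cannot be ``destabilized off the diagram''; moreover, in your own arrangement $U$ crosses $\gamma_1$ once, i.e.\ it runs through the very region you propose to delete, and deleting that pair would in any case change the ambient manifold by an $\stwo\times\sone$-summand.

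What is actually needed --- and what your proposal never establishes --- is that $w$ and $z$ can be joined by an arc disjoint from \emph{all} of the $\bfat$- and $\gfat$-curves. This is the crux, and it is not automatic: if $\gamma_1$ were pushed off to the side of $\beta_1$ so that $z$ lies in the thin region between $\beta_1$ and $\gamma_1$ while $w$ does not, the knot determined by $(w,z)$ in $(\Sigma,\bfat,\gfat)$ would be an essential core circle of an $\stwo\times\sone$-summand rather than the unknot, so the conclusion cannot be obtained from general position or from a Kirby-picture appeal alone. The paper's proof supplies exactly this missing step: it takes the arc $c$ joining $z$ and $w$ in the complement of the $\bfat$-curves (the closing arc of $K$) and checks curve by curve ($\gamma_1$ a small translate of $\beta_1$; the framing curves confined to the torus regions belonging to the $\link_i$; the remaining $\gamma_j$ small translates of $\beta_j$) that $c$ may be taken disjoint from $\gfat$ as well; the knot represented by $(w,z)$ is then $c$ together with a small push-off of $c$, a contractible loop bounding an embedded disc, hence the unknot. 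Your observations about the interval $I$ and its disjointness from $\gamma_2,\dots,\gamma_g$ are fine, but they must feed into an argument of this kind rather than into a destabilization.
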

\begin{proof} The diagram clearly represents $\#^k(\stwo\times\sone)$ 
(see~\cite{OsZa01, OsZa02}). By definition, the points $w$ and $z$ can 
be connected in $\Sigma$ in the complement of the $\beta$-curves. Let 
$c$ be such a curve. We claim that we can choose the curve 
$c$ to sit in the complement of the $\gfat$-curves. If we are 
able to show this, we are done, as we can use a small push-off 
$c'$ of $c$ and connect $z$ with $w$ in the complement of 
the $\gfat$-curves. By definition, the union $c'\cup c$ is 
isotopic to the knot represented by the pair $(w,z)$. The curve 
$c'\cup c$ is contractible and, thus, $(w,z)$ represents 
the unknot.\\
The curve $\gamma_1$ is a small isotopic translate of the curve 
$\beta_1$. Since $c$ and $\beta_1$ are disjoint, the curves $c$ 
and $\gamma_1$ are disjoint, too. The curves $\beta_i$, 
$i=2,\dots,k+1$ are meridians of torus components of the surface 
$\Sigma$. The curve $\gfat_i$ is isotopic to $n_i\beta_i+\lambda_i$ 
where $n_i$ is a suitable integer and $\lambda_i$ a longitude of 
the corresponding torus component associated to $\beta_i$. Hence, 
the $\gamma_i$ can be thought of as staying outside of the torus 
component in which $c$ lies in. Finally, there are the curves 
$\gamma_j$, $j\geq k+2$. The curve $\gamma_j$ is an isotopic 
translate of the curve $\beta_j$ which is disjoint from $c$. 
Hence, $\gamma_j$ can be thought of as being disjoint from $c$.
\end{proof}
Consequently, we have that $\hfkoc(\bfat,\gfat,w,z)$ 
admits a top-dimensional generator, $\hattheta$ say 
(see~\cite{OsZa03}). Denote by $Y'$ the manifold obtained by the 
surgery along the framed link $\link$ and denote by $K'$ the 
knot $K$ after the surgery. Since $Y_{\afat,\bfat}\cong Y$ and
$Y_{\afat,\gfat}\cong Y'$, it is possible to define
\[
  \Foc_{\link;\fraks}
  \co
  \hfkcirc(Y,K;\left.\fraks\right|_Y)
  \lra
  \hfkcirc(Y',K';\left.\fraks\right|_{Y'})
\]
as the map induced by $\foc_\fraks(\,\cdot\,,\hattheta)$ in homology.

\subsection{Invariants of Cobordisms} Given a knot cobordism $W$ from $(Y,K)$ to 
$(Y',K')$, we choose a handle decomposition of it, i.e.~we choose a splitting
\[
  W=W_1\cupb W_2\cupb W_3
\]
where $W_i$ is obtained by attaching $i$-handles. Let $\link$ be the framed link associated to the $2$-handle attachments in $W_2$, then we define
\[
  \Foc_{W;\fraks}
  =
  \Ehatund_{W_3;\fraks}
  \circ
  \Foc_{\link;\fraks}
  \circ
  \Ghatund_{W_1;\fraks}.
\]
\begin{proof}[Proof of Theorem~\ref{cobmapinvar}] This theorem has to 
be proved by showing that the map $\Foc_{W;\fraks}$
does not depend on the handle decomposition of $W$ and the data 
associated to it. It is easy to observe that on the chain level 
we have that
\begin{equation}
\begin{array}{rcl}
 \Ghatund_{U;\fraks}&=&G^\circ_{U;\fraks}\\
 \Ehatund_{V;\fraks}&=&E^\circ_{V;\fraks},
\end{array}
\label{eq:gandeequal}
\end{equation}
where $G^\circ_{U;\fraks}$ and $E^\circ_{V;\fraks}$ are the maps 
associated to $1$-handles and $3$-handles, respectively, which were defined by
Ozsv\'{a}th and Szab\'{o} in \cite{OsZa03}.
Thus, the maps on the left of \eqref{eq:gandeequal} and on the right 
of \eqref{eq:gandeequal} basically have the same properties. Examining 
the work \cite{OsZa03}, we see that to make the invariance work in the knot 
Floer case there is one property that is central: We have to prove that 
the map $\Foc_{\link;\fraks}$ is independent of the choice of subordinate
Heegaard diagram. In \cite{OsZa03}, Ozsv\'{a}th and Szab\'{o} prove this
for the maps $\Fcirc_{\link;\fraks}$ by showing that they {\it commute}
with all maps induced by admissible Heegaard moves. We will do this for 
the maps $\Foc_{\link;\fraks}$ in the sequel:
Given two Heegaard diagrams $\mH=(\Sigma,\afat,\bfat,w,z)$ and 
$\mH'=(\Sigma',\afat',\bfat',w,z)$ subordinate to the link $\link$, 
there is a sequence of subordinate Heegaard diagrams $\mH_1,\dots,\mH_n$
with $\mH=\mH_1$ and $\mH'=\mH_n$ such that we get from $\mH_i$ to
$\mH_{i+1}$ by one of the moves introduced in Proposition~\ref{prop:bouquet}.
Each of these moves induces an isomorphism
\[
 \Psi^\oc_{\fraks;i}
 \co
 \hfkoc(\mH_i;\fraks)
 \lra
 \hfkoc(\mH_{i+1};\fraks)
\]
between the respective homologies. We have to prove that
\begin{equation}
 \Psi^\oc_{\fraks;i}
 \circ
 \Foc_{\link;\fraks}
 +
 \Foc_{\link;\fraks}
 \circ
 \Psi^\oc_{\fraks;i}
 =0.
\label{eq:goal2}
\end{equation}
If $\Psi^\oc_{\fraks;i}$ is induced by a stabilization/destabilization,
the proof from the Heegaard Floer case, i.e.~the proof 
of \cite[Lemma 4.7]{OsZa03}, carries over verbatim. Otherwise, we proceed 
as follows: We first start defining
\[
 \Foc_{\link}
 =
 \sum_{\fraks}
 \Foc_{\link;\fraks}
\]
and, correspondingly, we define $\Psi^\oc_i$. Observe, that the sum is finite 
due to the fact that we demand the condition $n_z=0$ and use diagrams that are 
weakly admissible with respect to the point $z$. We know that, forgetting the 
point $w$, we obtain isomorphisms $\Psi_i$ and maps $\Fhat_{\link;\fraks}$ 
between the associated Heegaard Floer theories which fulfill the equation
\begin{equation}
 \widehat{\Psi}_i
 \circ
 \Fhat_{\link}
 +
 \Fhat_{\link}
 \circ
 \widehat{\Psi}_i
 =0\label{eq:needthis01}
\end{equation}
as shown by Ozsv\'{a}th and Szab\'{o}. They derived this equation by counting
ends of a suitable $1$-dimensional moduli space. Hence, their proof has a 
formulation in terms of the symbol homology theory: As shown in the previous 
sections, there is a symbol homology theory $\bh$ (see~\S\ref{sec:potacs} 
and \S\ref{sec:i}) such that 
both $\widehat{\Psi}_i$ and $\Fhat_{\link}$ admit canonical symbols in this 
theory, $\fraks_{\Psi}$ and $\fraks_{\Fhat}$ say. Ozsv\'{a}th 
and Szab\'{o}'s proof can be interpreted in the language of symbol homology 
which gives 
$
 \fraks_{\Fhat}\boxtimes\fraks_{\Psi}
 \boxplus
 \fraks_{\Fhat}\boxtimes\fraks_{\Psi}=0
$
in $\bh$. Since the Heegaard move underlying the map $\Psi_i$ 
respects the point $w$, we know that there are filtering morphisms
\begin{eqnarray*}
 \mff_*
 \co
 \bh
 \lra
 \bh^w\\
 \mff_{U;*}\co
 \bh
 \lra
 \bhu\\
\end{eqnarray*}
such that Theorem~\ref{thm:propp} holds (alternatively, Theorem~\ref{main} and Theorem~\ref{main2}). Since $\fraks_{\Fhat}$ fulfills the property $P$ with
$
 P(X)
 =
 X\boxtimes\fraks_{\Psi}
 \boxplus
 X\boxtimes\fraks_{\Psi},
$
the filtered symbols $\mff_*(\fraks_{\Fhat})$ and 
$\mff_{U;*}(\fraks_{\Fhat})$ fulfill
the properties $P_\mff$ and $P_{\mff_U}$, respectively. But, 
by construction, $\ev_*(\mff_*(\fraks_{\Fhat}))=\Foo_{\link;\fraks}$ 
and $\ev_*(\mff_*(\fraks_\Psi))=\Psi^{\bullet,\bullet}_i$. So, 
property $P_\mff$ implies 
\begin{eqnarray*}
 0
 =
 \ev_*\bigl(P_\mff(\mff_*(\fraks_{\Fhat}))\bigr)
 &=&
 \ev_*
 \bigl(\mff_*(\fraks_{\Fhat})\boxtimes\mff_*(\fraks_{\Psi})
 \boxplus
 \mff_*(\fraks_{\Fhat})\boxtimes\mff_*(\fraks_{\Psi})
 \bigr)\\
 &=&
 \ev_*\bigl(\mff_*(\fraks_{\Psi})\bigr) 
 \circ
 \ev_*\bigl(\mff_*(\fraks_{\Fhat})\bigr)
 +
 \ev_*\bigl(\mff_*(\fraks_{\Psi})\bigr)
 \circ
 \ev_*\bigl(\mff_*(\fraks_{\Fhat})\bigr)
 \\
 &=&
 \Psi^{\bullet,\bullet}_i
 \circ
 F^{\bullet,\bullet}_{\link}
 +
 F^{\bullet,\bullet}_{\link}
 \circ
 \Psi^{\bullet,\bullet}_i
\end{eqnarray*}
and, correspondingly, property $P_{\mff_U}$ implies
\[
 0=\ev_*(P_{\mff_U}(\mff_{U;*}(\fraks_{\Fhat})))
 =
 \Psi^{\bullet,-}_i
 \circ
 F^{\bullet,-}_{\link}
 +
 F^{\bullet,-}_{\link}
 \circ
 \Psi^{\bullet,-}_i.
\]
Hence, the map $\Foc_{\link}$ is an invariant as stated.\\
To get the refined statements, i.e.~equation \eqref{eq:goal2}, we make 
the following adaptions:
Observe, that to encode the maps $\Psi_i$ and $\Foc_\link$ as symbols, we need
a set $B$ which contains at most four sets of attaching circles. Hence,
in the corresponding symbol algebra only bigons, triangles and rectangles
appear. For all of these $n$-gons, Ozsv\'{a}th and Szab\'{o} introduced
the notion of associated $\spinc$-structure 
(see~\cite[Proposition 8.5 and \S8.1.5]{OsZa01}). We alter 
the theory $\bh$ by attaching an additional datum to the 
generators: We decorate moduli spaces as done in \S\ref{sec:tshp} and, 
additionally, attach a $\spinc$-structure. A choice of $\spinc$-structure
on a Whitney polygon especially induces a choice of $\spinc$-structures
on its vertices. We follow the lines from \S\ref{sec:tshp} verbatim except
for two issues: First, when defining the $\boxtimes$-product 
of pre-generators $\mA_1$ and $\mA_2$ with $\spinc$-structures we 
bring in the chosen $\spinc$-structure by saying that the data of two 
elements match, if the data of the factors without $\spinc$-structures match 
(in the sense given in \S\ref{sec:wpash}) and if the $\spinc$-structure
at their common vertex coincides (see~Definition~\ref{def:operators}).
Second, when defining the differential $\parsh$ of a generator $\mA$ with 
$\spinc$-structure $\fraks$, we bring in the additional datum in the following
way: Recall, that $\parsh$ is modeled on $\parcoone(\pi(\mA))$. For each 
component $A_1^{ab;\qfat}\times A_2^{ab;\qfat}$ of $\parcoone(\pi(\mA))$ 
(cf.~\S\ref{thediff}) the $\spinc$-structure $\fraks$ of $\mA$ 
induces $\spinc$-structures $\fraks_i$ on $A_i^{ab;\qfat}$, for $i=1,2$. We 
will attach $\fraks_i$ to the corresponding $\mA^{ab;\qfat}_i$ 
(cf.~\S\ref{thediff}) and then proceed as in \S\ref{thediff}.
The resulting homology theory shall be denoted by $\bh^c$. On this
homology theory there exists a map
\[
 \ev^c
 \co
 \bh^c
 \lra
 \maphat
\]
which is defined as $\ev$: Using the notation from 
Proposition~\ref{intro:ev}, for
a generator with $\spinc$-structure $\fraks$,
 i.e.~$\mA=A_{(\pointed,\fin,\fout);\fraks}$, we define
\[
 \ev^c(\mA)(\xfat)=\#_\fraks\bigl(A(\xfat,\qfat,\rfat)\bigr)\cdot\rfat
\]
where $\#_\fraks$ only counts elements whose $\spinc$-structure 
equal to $\fraks$. We extend to the symbol algebra as done in 
\eqref{eq:mor}. This theory also comes 
with variants $\bh^{w;c}$ and $\bhu^c$ and those
defined in \S\ref{sec:fsh} and \S\ref{sec:uesh}. We start with the
equation
\[
 \widehat{\Psi}_{\fraks;i}
 \circ
 \Fhat_{\link;\fraks}
 +
 \Fhat_{\link;\fraks}
 \circ
 \widehat{\Psi}_{\fraks;i}
 =0,
\]
which was proved by Ozsv\'{a}th and Szab\'{o} and copy the arguments 
from above and apply Theorem~\ref{main2}. This will provide us
with equality \eqref{eq:goal2}.
\end{proof}

\subsection{Other Knot Floer Theories}\label{sec:okft}
The construction of the maps presented above with the obvious 
notational adaptions --~i.e.~swapping the roles of $w$ and $z$~-- also
provide cobordism maps in the theories $\hfkco$. To apply the symbol homology 
for the invariance proof we have to alter the theory a little. In 
the proof of Theorem~\ref{cobmapinvar} the invariances of the maps 
were transferred from the $\hfhat$-theory using the filtering 
maps $\mff$ and $\mff_U$. 
In $\hfkco$, invariances cannot be related to the $\hfhat$-theory
but to the $\hfcirc$-theory. 
We have to define a symbol homology that captures the theory $\hfcirc$ (in the sense specified in the introduction). This 
is done by decorating moduli spaces of Whitney polygons for which both $n_z$ and 
$n_w$ are arbitrary. We also attach a choice of $\spinc$-structure to the moduli 
spaces (as done in the proof of Theorem~\ref{cobmapinvar}). The construction 
then follows the lines of \S\ref{sec:tshp}. Because of the $\spinc$-structures 
in the construction we will use $\ev^c$ to interpret the elements of the 
associated symbol algebra as maps. As filtering morphism we will need one of 
type $\mff$, i.e.~one analogous as the one defined in \S\ref{sec:tfm}. 
Then, the invariance proof will proceed exactly as the proof of Theorem~\ref{cobmapinvar}.

\section{Implication II -- A Surgery Exact Triangle}\label{sec:implications}
In \S\ref{sec:knotcob} we constructed cobordism maps for 
knot Floer homologies and used the techniques from symbol 
homologies in their invariance proof. Here, we would like to 
present other examples of how the techniques can be applied.
The fact that the knot Floer homologies are invariants of
a pair $(Y,K)$ is well-known, however the proof of 
Corollary~\ref{knotinv} shows that these invariances are in some
way {\it inherited} from the invariances of the $\hfhat$-theory. 
Furthermore, we
prove a generalization of the surgery exact sequence presented in
\cite[Theorem~8.2]{OsZa04}. Again, the proof rests on the 
symbol homology approach we discussed.
\begin{cor}[see~\cite{OsZa04}]\label{knotinv} The knot 
Floer homologies $\hfkoc(Y,K)$ are invariants of the pair $(Y,K)$ 
because $\hfhat(Y)$ is an invariant of $Y$. 
\end{cor}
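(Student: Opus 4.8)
My plan is to follow the template established in the proof of Theorem~\ref{cobmapinvar} essentially verbatim: the invariance of $\hfhat(Y)$ is proved purely by moduli space arguments, hence it has a formulation in the symbol homology language, and the filtering morphisms $\mff_*$ and $\mff_{U;*}$ transport that formulation to the knot Floer setting. Concretely, recall that $\hfhat(Y)$ being an invariant of $Y$ means that for any two Heegaard diagrams $\mH$, $\mH'$ of $Y$ there is a continuation chain homotopy equivalence $\Psi_{\mH,\mH'}\co\cfhat(\mH)\lra\cfhat(\mH')$, well defined up to chain homotopy, with $\Psi_{\mH,\mH}\simeq\id$ and $\Psi_{\mH',\mH''}\circ\Psi_{\mH,\mH'}\simeq\Psi_{\mH,\mH''}$. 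All of these maps, together with the chain homotopies realizing these relations and the homotopy inverses witnessing that the $\Psi$'s are equivalences, are defined by counting $0$-dimensional moduli spaces of Whitney polygons with (possibly dynamic) boundary conditions, and the relations among them are obtained by counting the ends of $1$-dimensional such moduli spaces. So by the extension of the theory in \S\ref{sec:psh} there is a symbol homology $\bh$ in which every $\Psi$ and every witnessing homotopy admits a canonical symbol, and the coherence relations become polynomial identities $P(\fraks_\Psi,\dots)=0$ in $\bh$ with $\ev_*$ sending these symbols back to the corresponding maps.

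Next I would fix a knot $K\subset Y$. Any two doubly-pointed Heegaard diagrams adapted to $K$ are connected by a finite sequence of Heegaard moves that preserve the pair $(w,z)$ and the adaptedness to $K$ (the doubly-pointed refinement of diagram uniqueness; cf.~\cite{OsZa03} and Proposition~\ref{prop:bouquet}), and in particular these moves respect the base point $w$. Hence the path of almost complex structures may be taken $w$-respectful, so the filtering morphisms $\mff_*\co\bh\lra\bh^w$ and $\mff_{U;*}\co\bh\lra\bhu$ are chain maps of semialgebras (Theorem~\ref{thm:filtmap} and its $U$-equivariant variant). Applying $\mff_*$ to $P(\fraks_\Psi,\dots)=0$ and using that it is a semialgebra morphism gives $P_\mff(\mff_*(\fraks_\Psi),\dots)=\mff_*(P(\fraks_\Psi,\dots))=\mff_*(0)=0$, and likewise with $\mff_{U;*}$ in place of $\mff_*$; this is precisely the content of Theorem~\ref{main}, Theorem~\ref{main2} and Theorem~\ref{thm:propp}. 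Finally, $\ev_*(\mff_*(\fraks_\Psi))$ is the knot Floer continuation map $\Psi^{\bullet,\bullet}\co\cfkhat(\mH)\lra\cfkhat(\mH')$ and $\ev_*(\mff_{U;*}(\fraks_\Psi))$ is $\Psi^{\bullet,-}\co\cfkminus(\mH)\lra\cfkminus(\mH')$, so evaluating $P_\mff$ (resp.~$P_{\mff_U}$) under $\ev_*$ yields exactly the statements that these knot Floer continuation maps are chain homotopy equivalences, satisfy $\Psi^\oc_{\mH,\mH}\simeq\id$, and compose up to homotopy. Therefore $\hfkoc(\mH)$ is independent of the chosen diagram adapted to $K$ and of the auxiliary data, i.e.~$\hfkoc(Y,K)$ is an invariant of $(Y,K)$ — and it is so literally because the whole invariance package is the image under $\mff_*$/$\mff_{U;*}$ of the corresponding package for $\hfhat(Y)$.

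The main obstacle is bookkeeping rather than conceptual: one must check that the \emph{entire} invariance package of $\hfhat$ — not just that each continuation map is a chain map, but the homotopy-coherence data (homotopy unit, homotopy associativity of composition, existence of homotopy inverses) that is actually needed to conclude that the homology is well defined — is of the form produced by counting ends of $1$-dimensional moduli spaces, hence is a genuine polynomial property in the sense of Definition~\ref{def:propp}, and that every polygon type that occurs (bigons, handle-slide triangles, stabilization quadrilaterals, and the dynamic-boundary moduli spaces of \S\ref{sec:psh}) respects the point $w$, so that $\mff_*$ and $\mff_{U;*}$ are genuinely defined on the symbols in question. Once this is in hand no new analysis is required: the argument is identical in spirit to, and slightly simpler than, the invariance proof of Theorem~\ref{cobmapinvar}, and the $\hfkinfty$ case follows formally by inverting $U$.
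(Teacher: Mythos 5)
Your proposal is correct and follows essentially the same route as the paper: encode the continuation maps, homotopies and their coherence relations as canonical symbols in the (perturbed) symbol homology, transfer the resulting polynomial properties via $\mff_*$ and $\mff_{U;*}$ using Theorem~\ref{thm:propp} (Theorems~\ref{main} and~\ref{main2}), and evaluate with $\ev_*$ to obtain the knot Floer chain homotopy equivalences. The only cosmetic difference is that the paper first disposes of stabilizations and handle slides by noting the Ozsv\'ath--Szab\'o arguments there are independent of $w$, and runs the symbol-homology transfer explicitly only for isotopies (and almost complex structures), whereas you treat the whole invariance package uniformly.
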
 
\begin{proof} Suppose we are given two Heegaard diagrams $\mH$ 
and $\mH'$ subordinate to a pair $(Y,K)$. We have to see that the Floer
homologies associated to these diagrams are isomorphic. There is a 
sequence $\mH_1,\dots,\mH_n$ of Heegaard diagrams subordinate to $(Y,K)$
such that we transform $\mH_i$ to $\mH_{i+1}$ by using one of the moves given
in Proposition~\ref{prop:bouquet}. The only non-trivial statements to prove are
independence of the choice of almost complex structure and invariance under
isotopies of the attaching circles. The arguments Ozsv\'{a}th and Szab\'{o} gave for stabilizations and handle 
slides in the Heegaard Floer case are completely independent of the introduction of 
an additional base point $w$. Thus, these proofs immediately carry over to the knot 
Floer case. We will proceed to give the
arguments for an isotopy $I_t$ of the $\bfat$-circles. The other cases can be 
proved in the same fashion. Thus, suppose we obtained
$\mH_{i+1}=(\Sigma,\afat,\bfat')$ from $\mH_{i}=(\Sigma,\afat,\bfat)$ by the 
isotopy $I_t$. The isotopy $I_t$ and its inverse $I_{1-t}$ induce maps 
\[
\begin{array}{rl}
  \pto:&\hspace{-0.2cm}
  \cfhat(\mH)
  \lra
 \cfhat(\mH')\\
 \pot:&\hspace{-0.2cm}
  \cfhat(\mH')
  \lra
  \cfhat(\mH),
\end{array}
\]
which are chain maps and their composition $\pot\circ\pto$ is 
chain homotopic to the identity. Denote 
by $\fraks_{\mH}$ and $\fraks_{\mH'}$ the canonical symbols
of the differentials $\parhat_{\mH}$ and $\parhat_{\mH'}$.
Furthermore, denote by $H$ the chain homotopy and by $\fraks_H$ the associated
canonical symbol (in the perturbed symbol homology). Finally, denote by
$\fraks_{to}$ and $\fraks_{ot}$ the canonical symbol of $\pto$ and $\pot$. 
Using the symbol homologies we get
\[
\begin{array}{rl}
  \ev_*(\mff_*(\fraks_{to})):&\hspace{-0.2cm}
  \cfkoo(\mH)
  \lra
  \cfkoo(\mH')\\
  \ev_*(\mff_*(\fraks_{ot})):&\hspace{-0.2cm}
  \cfkoo(\mH')
  \lra
  \cfkoo(\mH)\\
  \ev_*(\mff_{U;*}(\fraks_{to})):&\hspace{-0.2cm}
  \cfkminus(\mH)
  \lra
 \cfkminus(\mH')\\
 \ev_*(\mff_{U;*}(\fraks_{ot})):&\hspace{-0.2cm}
  \cfkminus(\mH')
  \lra
  \cfkminus(\mH).
\end{array}
\]
We want to prove that the first map is a chain map and that the
composition of the upper two is chain homotopic to the 
identity: The fact that $\pto$ is a chain map is given by the 
property $P$ with
\[
 P(X)
 =
 X
 \boxtimes
 \fraks_{\mH'}
 \boxplus
 \fraks_{\mH}
 \boxtimes
 X.
\]
The fact that $H$ is a chain homotopy to the identity is encoded by
$Q$ with
\[
 Q(X)
 =
 \fraks_{to}\boxtimes\fraks_{ot}
 \boxplus
 \os
 \boxplus
 X
 \boxtimes
 \fraks_{\mH'}
 \boxplus
 \fraks_{\mH}
 \boxtimes 
 X.
\]
The invariance proof of Ozsv\'{a}th and Szab\'{o} shows that
$P(\fraks_{to})=0$ and that $Q(\fraks_H)=0$. By Theorem~\ref{thm:propp} 
the symbol $\mff_*(\fraks_{to})$
fulfills property $P_\mff$ and the symbol $\mff_*(\fraks_H)$ fulfills the
property $Q_\mff$. Thus, both $P_\mff(\mff_*(\fraks_{to}))$ and
$Q_\mff(\mff_*(\fraks_H))$ vanish. Applying the morphism $\ev_*$ we 
see that 
\[
0=\ev_*(P_\mff(\mff_*(\fraks_{to})))\\
=
\ev_*(\mff_*(\fraks_{to}))\circ\partial^{\bullet,\bullet}_{\mH}
+
\partial^{\bullet,\bullet}_{\mH'}
\circ
\ev_*(\mff_*(\fraks_{to}))
\]
holds and, correspondingly, that
\begin{eqnarray*}
0
&=&
\ev_*(Q_\mff(\mff_*(\fraks_{H})))\\
&=&
\ev_*(\mff_*(\fraks_{ot}))\circ\ev_*(\mff_*(\fraks_{to}))
+
\id
+
\partial^{\bullet,\bullet}_{\mH'}
\circ 
\ev_*(\mff(\fraks_H))
+
\ev_*(\mff(\fraks_H))\circ\partial^{\bullet,\bullet}_{\mH}.
\end{eqnarray*}
Hence, $\ev_*(\mff_*(\fraks_{to}))$ is a chain map and the composition
$\ev_*(\mff_*(\fraks_{ot}))\circ\ev_*(\mff_*(\fraks_{to}))$ chain 
homotopic to the identity. Similarly, we prove that 
$\ev_*(\mff_*(\fraks_{ot}))$ is chain and that
$\ev_*(\mff_*(\fraks_{to}))\circ\ev_*(\mff_*(\fraks_{ot}))$ is chain 
homotopic to the identity. Using the filtering map $\mff_{U;*}$, we can 
prove the corresponding statements for $\ev_*(\mff_{U;*}(\fraks_{to}))$, 
$\ev_*(\mff_{U;*}(\fraks_{ot}))$, 
$\ev_*(\mff_{U;*}(\fraks_{to}))\circ\ev_*(\mff_{U;*}(\fraks_{ot}))$ 
and 
$\ev_*(\mff_{U;*}(\fraks_{ot}))\circ\ev_*(\mff_{U;*}(\fraks_{to}))$.
\end{proof}
Now suppose we are given a closed, oriented $3$-manifold 
$Y$ and a knot $K\subset Y$. Given a knot $L\subset Y$ disjoint from
$K$ with framing $n$, we define $(Y_n,K')$ as the pair we obtain by 
performing surgery along $L$. Denote by $W_1$ the induced knot cobordism. 
Then, we denote by $(Y_{n+1},K'')$ the pair we obtain 
from $(Y_n,K')$ by performing a $(-1)$-surgery along a meridian 
$L$, $\mu$ say, and we denote by $W_2$ the associated knot cobordism. Finally, denote by $W_3$ the knot cobordism
obtained by performing a $(-1)$-surgery along a meridian of $\mu$.
\begin{theorem}\label{thm:set} In the situation defined above, the 
following sequence is exact.
\[
 \xymatrix@C=3pc@R=0.5pc{
 \hfkoc(Y,K)\ar[r]^{\Foc_{W_1}}
 &
 \hfkoc(Y_n,K')\ar[r]^{\Foc_{W_2}}
 &
 \hfkoc(Y_{n+1},K'')\ar@/^1.5pc/[ll]^{\Foc_{W_3}}
 }
\]
\end{theorem}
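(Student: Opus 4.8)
The plan is to deduce the triangle from the corresponding surgery exact triangle in the $\hfhat$-theory, namely \cite[Theorem~8.2]{OsZa04} (cf.~also \cite[Theorem~2.7]{OsZa03}), and then transport it to knot Floer homology by the filtering morphisms, following the same strategy as the proof of Theorem~\ref{cobmapinvar}. First I would fix the model. Each of $W_1$, $W_2$, $W_3$ is a single two-handle cobordism, surgery along a framed knot disjoint from $K$, so each $\Foc_{W_i}$ is a two-handle map of the kind constructed in \S\ref{sec:knotcob} and no one- or three-handle maps enter. As in \S\ref{sec:knotcob}, I would choose a single Heegaard multi-diagram $(\Sigma,\afat,\bfat,\gfat,\delta,w,z)$ adapted to $K$ and simultaneously subordinate to the three surgery descriptions, in which $\Foc_{W_1}$, $\Foc_{W_2}$, $\Foc_{W_3}$ are induced by counting holomorphic triangles with a top-dimensional generator $\hattheta$ inserted in the appropriate edge. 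Since only bigons, triangles and rectangles occur among a four-tuple of attaching-circle sets, I would, exactly as in the proof of Theorem~\ref{cobmapinvar}, attach $\spinc$-data to generators and work in the refined symbol homologies $\bh^c$, $\bh^{w;c}$, $\bhu^c$ with the evaluation $\ev^c$.

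The heart of the matter is that \ozs--\sza's proof of \cite[Theorem~8.2]{OsZa04} is a ``Floer theoretic scheme'' argument: it is obtained by inspecting ends of one-dimensional moduli spaces of holomorphic rectangles and then counting, yielding (a) that $\Foc_{W_{i+1}}\circ\Foc_{W_i}$ is chain homotopic to zero through an explicit homotopy $H_i$ defined by a rectangle count, and (b) an algebraic exactness criterion (the lemma of \cite{OsZa04}) relating $\Foc_{W_{i+1}}$ and $H_i$, to the effect that the mapping cone of $\Foc_{W_i}$ is quasi-isomorphic to the third complex. Both (a) and (b) being pure moduli-space arguments, by the discussion in \S\ref{parone} and \S\ref{sec:eai} each is encoded by a polynomial identity with coefficients in $\bh^c$. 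Writing $\fraks$ for the canonical symbols of the triangle maps and $\fraks_{H_i}$ for those of the homotopies (in the perturbed symbol homology of \S\ref{sec:potacs}, as used in \S\ref{sec:knotcob}), \ozs--\sza's argument produces polynomials $P,Q\in\mbp(\mG)$ built from $\boxplus$, $\boxtimes$ and the symbols of the boundary operators and the $\fraks_{H_i}$, with $P(\fraks)=0$ and $Q(\fraks_{H_i})=0$, capturing (a) and (b) respectively.

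The transfer is then formal. The surgeries are performed disjointly from $K$, hence from the interval joining $z$ and $w$, so the underlying Heegaard moves respect $w$, and the filtering morphisms $\mff_*\co\bh^c\lra\bh^{w;c}$ and $\mff_{U;*}\co\bh^c\lra\bhu^c$ are chain maps. By Theorem~\ref{thm:propp} (equivalently Theorem~\ref{main} and Theorem~\ref{main2}), the filtered symbols $\mff_*(\fraks)$, $\mff_{U;*}(\fraks)$, $\mff_*(\fraks_{H_i})$, $\mff_{U;*}(\fraks_{H_i})$ satisfy $P_\mff$, $P_{\mff_U}$, $Q_\mff$, $Q_{\mff_U}$. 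Applying $\ev^c$ and Proposition~\ref{intro:ev}, and using that $\ev^c$ of the filtered symbol of a triangle map is precisely the corresponding knot-cobordism map on the $\cfkoc$-complex (with $w$ retained, resp.~with the $U$-variable recorded), $P_\mff$ and $P_{\mff_U}$ become the assertions $\Foc_{W_{i+1}}\circ\Foc_{W_i}\simeq 0$, while $Q_\mff$ and $Q_{\mff_U}$ become the exactness criterion for the complexes $\cfkoc(Y,K)$, $\cfkoc(Y_n,K')$, $\cfkoc(Y_{n+1},K'')$. Feeding these into the same algebraic lemma used by \ozs--\sza\ and passing to homology yields exactness of the displayed sequence.

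The main obstacle I expect is item (b): pinning down the polynomial $Q$ that encodes \emph{exactness}, rather than merely the vanishing of compositions, and checking that \ozs--\sza's verification of the exactness criterion is genuinely a ``Floer theoretic scheme'' argument expressible in $\bh^c$, i.e.~that the rectangle counts defining the $H_i$ and all compatibilities among them arise from codimension-one boundaries of moduli spaces and hence lie in the image of $\parsh$, so that the relevant relations really hold in symbol homology. Once $Q$ is identified, the application of the filtering morphisms and $\ev^c$ is routine, and the concluding passage from the chain-level exact triangle to the homology-level one is standard homological algebra.
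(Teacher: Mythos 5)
Your proposal follows essentially the same route as the paper: the paper also does not prove exactness directly in symbol homology, but instead transfers the two moduli-space-level hypotheses (a chain homotopy relation for the composites and an associativity property of the cobordism maps) to the knot Floer setting via the filtering morphisms and Theorems~\ref{main} and \ref{main2}, and then invokes the purely algebraic mapping-cone exactness lemma — in the paper this is \cite[Lemma~4.2]{OsZa05} rather than a lemma of \cite{OsZa04} — applied on the knot Floer side. Your worry about a polynomial encoding ``exactness'' itself is thus resolved exactly as you suggest in your last step: only the hypotheses of that algebraic lemma are encoded and filtered, and exactness follows formally.
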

\begin{proof} We use the mapping cone proof approach of 
Ozsv\'{a}th and Szab\'{o} from \cite{OsZa05}. They use an 
algebraic trick using mapping cones to prove exactness of 
the sequence, namely \cite[Lemma~4.2]{OsZa05}. 
To apply this lemma, they need to prove two properties, where
the one is an associativity property of cobordism maps and 
the other a chain homotopy relation of cobordism maps. Both 
of these properties can be encoded as a property $P$ in the 
corresponding symbol homologies.
Hence, by Theorem~\ref{main} and Theorem~\ref{main2} the 
corresponding associativity property and
chain homotopy relation also hold in the knot Floer 
case which allows us to apply their Lemma~4.2 to get exactness.
\end{proof}
In a similar vein, other properties and statements about cobordism
maps can be easily transferred. Since the strategy of the proofs is
always the same, we will leave this to the interested reader.

\end{document}